\definecolor{darkblue}{rgb}{0,0.4,0.9}
\definecolor{gray10}{rgb}{0.1,0.1,0.1}
\definecolor{gray20}{rgb}{0.2,0.2,0.2}
\definecolor{gray30}{rgb}{0.3,0.3,0.3}
\definecolor{gray40}{rgb}{0.4,0.4,0.4}
\definecolor{gray60}{rgb}{0.6,0.6,0.6}
\definecolor{gray80}{rgb}{0.8,0.8,0.8}
\definecolor{gray90}{rgb}{0.9,0.9,.9}
\definecolor{gray95}{rgb}{0.95,0.95,.95}
\definecolor{gray96}{rgb}{0.96,0.96,.96}
\definecolor{lgreen} {RGB}{180,210,100}
\definecolor{dblue}  {RGB}{20,66,129}
\definecolor{ddblue} {RGB}{11,36,69}
\definecolor{lred}   {RGB}{220,0,0}
\definecolor{nred}   {RGB}{224,0,0}
\definecolor{norange}{RGB}{230,120,20}
\definecolor{nyellow}{RGB}{255,221,0}
\definecolor{ngreen} {RGB}{98,158,31}
\definecolor{dgreen} {RGB}{78,138,21}
\definecolor{nblue}  {RGB}{28,130,185}
\definecolor{jblue}  {RGB}{20,50,100}
\definecolor{nnyellow}{RGB}{235,200,0}
\definecolor{purple}{RGB}{150, 0, 120}
\definecolor{sgGreen} {RGB}{20, 180, 50}
\definecolor{revised}{rgb}{0,0,0.9}
\newtheorem{theorem}{Theorem}
\newtheorem{lemma}{Lemma}
\newcommand{\pl}{\parallel}
\newcommand{\openr}{\hbox{${\rm I\kern-.2em R}$}}
\newcommand{\openn}{\hbox{${\rm I\kern-.2em N}$}}
\newcommand{\opend}{\hbox{${\rm I\kern-.2em D}$}}
\title{Robust Estimation of Data-Dependent Causal Effects based on Observing a Single Time-Series}
\author[1]{Mark van der Laan} 
\author[2]{Ivana Malenica}
\affil[1]{\small{Department of Statistics and Division of Biostatistics, University of California, Berkeley}} 
\affil[2]{\small{Division of Biostatistics, University of California, Berkeley}} 
\date{\today}
\begin{document}
\maketitle
 
\begin{abstract}
Consider the case that one observes a single time-series, where at each time $t$ one observes a data record $O(t)$ involving treatment nodes $A(t)$, possible covariates $L(t)$ and an outcome node $Y(t)$.
We assume that the conditional distribution of $O(t)$, given the observed past, is described by a common function only depending on a fixed dimensional summary measure of the past ($C_o(t)$). The data record at time $t$ carries information for an (potentially causal) effect of the treatment $A(t)$ on the outcome $Y(t)$, in the context defined by $C_o(t)$. 
The conditional distribution of $O(t)$ is characterized by a conditional distribution of the treatment nodes and the conditional distribution of possibly time-dependent covariates and outcome. An important scenario is that the treatment is sequentially randomized. We are concerned with defining causal effects that can be consistently estimated, with valid inference, for sequentially randomized experiments without further assumptions. More generally, we consider the case when the (possibly causal) effects can be estimated in a double robust manner, analogue to double robust estimation of effects in the i.i.d. causal inference literature. Previous work on the marginal distribution of counterfactual outcomes, such as the marginal distribution of the outcome at a particular time point under a certain intervention on one or more of the treatment nodes, cannot be estimated in a double robust manner \cite{book2018}.  Instead, in this article, we propose a general class of averages of conditional (context-specific) causal parameters that can be estimated in a double robust manner, therefore fully utilizing the sequential randomization. We propose a targeted maximum likelihood estimator (TMLE) of these causal parameters, and present a general theorem establishing the asymptotic consistency and normality of the TMLE. We extend our general framework to a number of typically studied causal target parameters, including a sequentially adaptive design within a single unit that learns the optimal treatment rule for the unit over time. We demonstrate the favorable statistical properties of our estimator through various simulation studies, and provide a software package that implements our methods \cite{malenica2017tstmle}. Our work opens up robust statistical inference for causal questions based on observing a single time-series on a particular unit. 
 \end{abstract}

{\bf Keywords:} Causal inference, data dependent estimand, double robustness, efficient influence curve, $G$-computation formula, targeted minimum loss estimation (TMLE), time-series. 

\section{Introduction}
\subsection{Motivation}

The applications of ``N-of-1'' precision health and medicine are exceedingly important in this era of big data, mobile interventions, and health-monitoring devices. In this manuscript, we are concerned with the development of nonparametric efficient estimators of causal effects of intervention nodes on a subsequent outcome based on observing a single unit over many time points. As such, we address the pressing need for statistical methods that provide actionable inference for a single target unit at any point in time. 

Suppose that one observes a single time series, where at each time $t$ one observes a data record $O(t)$ involving treatment nodes $A(t)$, an outcome node $Y(t)$, and possibly other covariates $L(t)$. In order to talk about causality, we assume that the time-ordering within $O(t)$ with respect to the treatment nodes is known. In most of our examples, $A(t)$ is a single treatment node. but it could also be a vector of time-ordered treatment nodes,  alternated with components of $L(t)$.
We assume that the conditional distribution of $O(t)$ given the observed past is described by a common unknown function $(o(t),C_o(t))\rightarrow\theta(o(t),C_o(t))$ that only depends on the past $O(1),\ldots,O(t-1)$ through a fixed dimensional summary measure $C_o(t)$. For example, one might assume that the conditional density of $O(t)$, given $O(1),\ldots,O(t-1)$, equals a conditional density $\theta(o(t)\mid C_o(t))$ for a common function $\theta$, where this function is otherwise unspecified. More generally, we have that the conditional distribution $P_{O(t)\mid C_o(t),\theta}$ is determined by a common function $\theta$.

The density of $O(t)$ is characterized by the conditional density of treatment nodes and conditional density of the outcome and covariate nodes. One might know, by design, that the conditional density of the treatment node is known (under control of the experimenter) while the other conditional densities are unknown. In that case, one would assume a common conditional density for the outcome and covariate nodes. This setup again describes a model for the distribution of the time-series, indexed by common (in time) conditional densities, analog to the standard conditional stationarity assumptions in time-series literature \cite{timeseries2005}. For certain target parameters  it is also necessary to assume a limited memory in the sense that $C_o(t)$ is only a function of a limited past $O(t-k),\ldots,O(t-1)$ for some fixed dimensional $k$. 

We are interested in models for the probability distribution of the time-series that refrain from making unrealistic parametric assumptions. In particular, we concentrate on models that only make a conditional stationarity assumption. Since the likelihood of the data is parameterized by a function $\theta\in\Theta$,  one can consistently estimate this common function $\theta$ and thereby the probability distribution of the time-series. For example, one might use likelihood based estimation combined with online cross-validation, such as an online super learner \cite{online2014}. We note that standard maximum likelihood estimation would break down for infinite dimensional parameter spaces $\Theta$, due to the curse of dimensionality. 

While possible, our goal is not to estimate the whole mechanism $\theta$ and thereby the whole density of the time-series. We are concerned with statistical inference about causal impacts of the treatment nodes on the outcome nodes, reflecting a certain part of the distribution. For example, one might want to know what the distribution of the outcome at time $\tau$, ($Y(\tau)$), would have been had we intervened on some of the past treatment nodes in the time-series. These type of marginal parameters with the corresponding efficient influence curve and targeted maximum likelihood estimator were developed and proposed in the previous work \cite{book2018}. The asymptotic normality of these estimators relies on consistent estimation (e.g., at an appropriate rate faster than $n^{-1/4}$) of the part of $\theta$ the efficient influence curve depends upon. However, the efficient influence curve of the marginal time-series parameter relies on the whole mechanism $\theta$ in a non-double robust manner  \cite{book2018}. Therefore, even for the situation where the treatment nodes were randomly assigned and known, the inference will still rely on consistent (at rate) estimation of the conditional distributions of the covariate and outcome nodes. This is a stark contrast to the independent and identically distributed case with nonparametric model for the common distribution $\bar{P}$, where the TMLE of such parameters would be completely robust against misspecification of $\bar{P}$ if the treatment mechanism is known. The lack of robustness of the efficient influence function for the marginal time-series parameter is due to its dependence on the density of the marginal distribution of $C_o(t)$ across time $t$, a complex function of the common stationary mechanism $P_{O(t)\mid C_o(t),\theta}$. As such, estimation of the efficient influence curve of the marginal time-series parameter, and thereby the construction of a TMLE, is quite involved and computer intensive \cite{blaauw2017OSL, malenica2017tstmle01}. 

This raises the question if there are causal parameters of the time-series data distribution which are possibly easier to estimate efficiently, and which exhibit robust inference when the treatment mechanism is known. We provide a confirmatory answer in this work. Specifically, we propose a class of statistical target parameters $\bar{\Psi}(\theta)$ defined as the average over time $t$ of $C_o(t)$-specific pathwise differentiable target parameters $\Psi_{C_o(t)}(\theta)$ of the conditional distribution of $P_{O(t)\mid C_o(t),\theta}$. That is, for context $C_o(t)$, one defines a desired target parameter of the distribution of $O(t)$, given $C_o(t)$, as if we were able to observe many observations from this distribution. At that point, we can simply refer to the literature on causal inference, providing statistical estimands $\Psi_{C_o(t)}(\theta)$ of this conditional distribution that can be interpreted as a causal effect of the treatment nodes on the outcome under the randomization assumption (e.g., the $G$-computation formula from the i.i.d. causal inference literature). Interestingly and importantly, one could make the choice $\Psi_{C_o(t)}(\theta)$ of target parameter of the conditional distribution of $O(t)$ given $C_o(t)$ depend on the context $C_o(t)$, allowing one to adapt the choice of target parameter over time in response to $C_o(t)$.

We emphasize that statistical target parameters $\bar{\Psi}(\theta)$ are data-dependent, since they are defined as an average over time of parameters of the conditional distribution of $O(t)$ given the observed realization of $C_o(t)$. As such,  $\bar{\Psi}(\theta)$ depends on the actual realization of the time-series, specifically $(C_o(1),\ldots,C_o(N))$. We also note that since the efficient influence curve $D^*_{C_o(t)}(\theta)$ of each $C_o(t)$-specific target parameter is double robust, it follows that we can estimate the average of $C_o(t)$-specific causal effects in a double robust manner as well. In addition, the linear approximation of the TMLE is a martingale sum $\frac{1}{N}\sum_t D^*_{C_o(t)}(\theta)(O(t))$, allowing for the asymptotic normality of the TMLE to be established based on the martingale central limit theorem and general results for martingale processes \cite{probability2010, handel2009}. 

\subsection{Brief review of relevant literature}
The literature on causal inference in time-series is rapidly growing. The existing statistical methods for performing estimation and inference for causal effect in time-series data are limited, and the literature on this subject has only recently started to develop \cite{Abadie2010, Abadie2015, popescu2013causality, Kleinberg2011ALF, li2017npts, peters2012causalts, poulos2017}. In this review, we reflect on a small subset of this developing literature, focusing on the key aspects and challenges emerging in the statistical estimation of (possibly causal) effects in single time-series data. We emphasize that our review is by no means exhaustive, with many promising methodological approaches not mentioned. 

Granger causality is one of the oldest methods proposed for assessing causal effects in a time-series setting \cite{granger1969}. In short, it quantifies the predictive impact of knowing the past of one time-series for predicting the future of another time-series, given the past of both dependent systems. The idea of Granger causality corresponds to estimating a direct effect with parametric assumptions, where the targets are the entries of the coefficient matrix in a vector autoregressive model. While original Granger causality was restricted to the linear case, more modern formulations include nonlinear Granger causalities \cite{marinazzo2008, faes2011} as well as more general transfer entropy approaches \cite{schreiber2000, darmon2017}. 

The synthetic control method has also become a popular method in social sciences for making causal inferences on observational time-series \cite{Abadie2010,Abadie2015}. Briefly, the method generalizes difference-in-difference estimation in a panel data setting to multivariate time-series with a single treated unit. Similarly to Granger causality, the method relies on parametric assumptions, as well as on availability of pre-period covariates and convex combination of control units. Recent work by Xu proposes a linear fixed effects model that generalizes the synthetic control method to cases of multiple treated units; however, their method still relies on parametric assumptions \cite{xu2017}. On the other hand, Poulos suggests estimating causal effect of a discrete intervention in observational time-series using encoder-decoder neural networks, which does not rely on pretreatment covariates \cite{poulos2017}. Nevertheless, the assumption is that encoder-decoder recurrent neural networks would be able to learn the process for any time-series data, which might not be true in practice. Similarly, Kay et al. advocate for state-space models, and propose inferring causal effects on the basis of a diffusion-regression state-space model that predicts the counterfactual market response in a synthetic control that would have occurred had no intervention had taken place \cite{kay2015}. Other promising approaches include recent work by Li et al. on marginal integration in time-series, which generalizes marginal integration methodology to dependent settings \cite{li2017npts}. Their method provides a nonparametric approach to causality for stationary stochastic processes for a single intervention with no instantaneous effects.  

We also emphasize the significance of our methodology for adaptive randomized trials within a single unit, which are tailored to approximate an optimal treatment rule as sample size grows. Literature on single-unit adaptive sequential trials is almost non-existent to our knowledge, except for the ground-breaking work by Murphy et al \cite{timevarmobile, microrandtrial, liao2015microrand, dempsey2017stratmicrorand, Luers2018, Smith2017}. We aim to build on these ideas in this manuscript, by providing model-free efficient estimators of causal effects based on single-subject interventions on the corresponding unit-level outcome. 

\subsection{Overview of the article}
In Section \ref{sect2} we will formally present the general formulation of the statistical estimation problem, consisting of specifying the statistical model, the target parameter defined as the average of $C_o(t)$-specific target parameters, the corresponding efficient influence curve, and the exact second order expansion of the target parameter around the truth. In addition, in Section \ref{sect2} we formally present the targeted minimum loss estimator (TMLE). Consequently, in Section \ref{sect2b} we present the TMLE analysis resulting in a general theorem establishing asymptotic consistency, asymptotic normality, and asymptotic inference for the time-series setting. In addition, in Section \ref{sect2b} we extend our i.i.d. results for the Highly Adaptive Lasso (HAL) algorithm to our time-series settings. As such, we establish theoretical results that show that the worst-case rate of convergence with time-series HAL is faster than $N^{-1/4}$ under weak conditions. In Section \ref{sect3} we demonstrate estimation of the average of $C_o(t)$-specific causal effects of a single time point intervention $A(t)$ on outcome $Y(t)$ with $O(t)=(A(t),Y(t),W(t))$ being a simple longitudinal data structure with a single treatment node $A(t)$. We generalize methodology described in the previous section in Section \ref{sect4}. In particular, we address the setting of $C_o(t)$-specific causal effect of a multiple time point intervention $A(t,j)$, $j=1,\ldots,K$, on  $Y(t)$, with $O(t)=(A(t:0),L(t:0),\ldots,A(t:K), Y(t)= L(t:K+1))$ being itself a complex longitudinal data structure within a $t$-time-block. In Section \ref{sect5} we return to the simple data structure $O(t)=(A(t),Y(t),W(t))$, but we now aim to learn the optimal individualized treatment rule for $A(t)$ in response to $C_o(t)$ that optimizes the outcome $Y(t)$. Note that the process of generating $A(t)$ is itself controlled by the experimenter, so that one can simultaneously start assigning treatment according to the best current estimate of the optimal treatment rule. Most importantly, we provide model-free methodology for adaptive sequential design learning of the optimal treatment rule within a single unit. In Section \ref{sect6} we show simulation results for the average of $C_o(t)$-specific causal effects of a single time point intervention and adaptive design learning the $C_o(t)$-specific optimal individualized treatment rule parameters. We conclude with a discussion in Section \ref{sect7}.


\section{General approach for robust estimation of averages of context-specific target parameters}\label{sect2}

In this section we present a general formulation of the estimation problem, including the statistical model, target parameter, and second order expansion. With the statistical estimation problem defined, we proceed to the development of the TMLE for the general time-series setup. In the next section we will analyze the TMLE to prove a general theorem establishing asymptotic consistency and normality. 

\subsection{Statistical estimation problem}
{\bf Data and likelihood:}
Let the observed data be $O(t)$, $t=1,\ldots,N$.  We note that $O(t)$ is of a fixed dimension in time $t$, and is an element of a Euclidean set ${\cal O}$. An important case is that $O(t)=(A(t),Y(t),W(t))$, where $A(t)$ occurs before $Y(t)$ and $Y(t)$ occurs before $W(t)$. Let $A(t)$ denote the exposure or treatment, while $Y(t)$ denotes an outcome of interest at time $t$, with $W(t)$ being all the other post-treatment measurements beyond the outcome of interest. We define $O^N=(O(t):t=1,\ldots,N)$, and let $P^N$ denote its probability measure. In this example we can factorize the probability density of the data according to the time ordering as follows:
\begin{align*}
p^N(o) &= \prod_{t=1}^{N}p_{a(t)}(a(t) | \bar{o}(t-1))  \prod_{t=1}^{N}p_{y(t)}(y(t) | \bar{o}(t-1),a(t)) \\ 
&\phantom{{}=0} \prod_{t=1}^{N}p_{w(t)}(w(t) | \bar{o}(t-1),y(t),a(t)) 
\end{align*}
Here, $p_{a(t)}$, $p_{y(t)}$ and $p_{w(t)}$ denote the conditional probability densities of $A(t)$, $Y(t)$ and $W(t)$ given the relevant past. We define $\mu_a$, $\mu_y$ and $\mu_w$ as the corresponding dominating measures. Finally, let $P_{O(t)\mid \bar{O}(t-1)}$ be the conditional probability distribution of $O(t)$, given $\bar{O}(t-1)$, defined on a sigma-algebra of ${\cal O}$.

\vspace{2mm}
\noindent
{\bf Statistical model for time-series:}
Since $O^N$ represents a single time-series, a dependent process, we observe only a single draw from $P^N$. As a result, we are unable to estimate $P^N$ from this single observation without additional assumptions. In particular, we assume that $P_{O(t)\mid \bar{O}(t-1)}$ depends on $\bar{O}(t-1)$ through a summary measure $C_o(t)=C_o(\bar{O}(t-1))\in {\cal C}$ of fixed dimension. For later notational convenience, we denote this conditional distribution with $P_{C_o(t)}$. Then, the density $p_{C_o(t)}$ of $P_{C_o(t)}$ with respect to a dominating measure $\mu_{C_o(t)}$ is a conditional density $(o,C_o)\rightarrow p_{C_o(t)}(o\mid C_o)$ so that for each value of $C_o(t)$, $\int p_{C_o(t)}(o\mid C_o(t))d\mu_{C_o(t)}(o)=1$. Additionally, we assume that $p_{C_o(t)}$ is parameterized by a common (in time $t$) function $\theta\in \Theta$, where $\theta:{\cal C}\times {\cal O}\rightarrow\openr$ is a function 
$(c,o)\rightarrow \theta(c,o)$. We note that $p_{C_o(t)}$ only depends on $\theta$ through $\theta_{C_o(t)}\equiv \theta(C_o(t),\cdot)$. In some of our examples, we simply assume that
$(c,o)\rightarrow p_{C_o(t)}(o\mid c)=\bar{p}(o\mid c)$ is constant in $t$, so that $\theta=\bar{p}$ is a common conditional density. The simplified version of the assumption is generally known as the conditional (strong) stationarity assumption. On the other hand, since $p_{C_o(t)}$ factors into multiple conditional densities, we note that there are many examples for which some parts of $p_{C_o(t)}$ might be assumed to be known and change over time $t$, while others play no role in the estimation and can therefore be unrestricted. As such, we emphasize that the key factors of $p_{C_o(t)}$ necessary for the estimation procedure must satisfy the stationarity assumption. For example, the conditional density of treatment node $A(t)$ might be known and could change over time $t$, whereas other parts of the likelihood are assumed to be unknown but constant in time $t$. 

\vspace{2mm}
\noindent
This defines a statistical model ${\cal M}^N=\{P^N_{\theta}:\theta\}$ where $P^N_{\theta}$ is the probability measure for the time-series implied by 
$p_{C_o(t)}=p_{\theta,C_o(t)}$. Additionally, we define a statistical model of distributions of $O(t)$ at time $t$, conditional on realized summary $C_o(t)$. 
In particular, let ${\cal M}(C_o(t))=\{P_{\theta,C_o(t)}:\theta\}$ be the model for $P_{C_o(t)}$ for a given $C_o(t)$ implied by ${\cal M}^N$. 

\vspace{2mm}
\noindent
{\bf Target parameter:}
First, we define a target parameter conditional on realized summary $C_o(t)$.  For a given $C_o(t)$, we define a target parameter $\Psi_{C_o(t)}:{\cal M}(C_o(t))\rightarrow\openr$ that is pathwise differentiable with canonical gradient $D^*_{C_o(t)}(\theta)$ at $P_{\theta,C_o(t)}$  in ${\cal M}(C_o(t))$. We remind that the variance of the efficient influence curve (canonical gradient) gives the generalized Cramer-Rao lower bound for the variance of any regular asymptotically linear estimator  based on observing $n$ i.i.d. observations from $P_{\theta,C_o(t)}$ \cite{vaart2000}. Let $\Psi_{C_o(t)}(\theta)=\Psi_{C_o(t)}(P_{\theta,C_o(t)})$ so that we can also use the notation $\Psi_{C_o(t)}(\theta)$ to denote the target parameter. By assumption, $\Psi_{C_o(t)}(\theta)$ only depends on $\theta$ through its section  $o\rightarrow \theta_{C_o(t)}(o)=\theta(o,C_o(t))$. We also denote this collection of $C_o(t)$-specific canonical gradients with a single function $(c,o)\rightarrow D^*(\theta)(c,o)$ so that $D^*_{C_o(t)}(\theta)(o)=D^*(\theta)(C_o(t),o)$, viewing it as a function of $(C_o(t),O(t))$.  Note that, for a given $C_o(t)$, this canonical gradient $D^*_{C_o(t)}(\theta)$ is a function of $O\in {\cal O}$ which has conditional mean zero w.r.t. $P_{\theta,C_o(t)}$.

\vspace{2mm}
\noindent
Additionally, we define an  average over time of these $C_o(t)$-specific target parameters. In particular, we define the following target parameter  $\Psi^N:{\cal M}^N\rightarrow\openr$ of the data distribution $P^N\in {\cal M}^N$, which is a function of $\theta$:
\begin{eqnarray*}
\Psi^N(P^N)&=&\bar{\Psi}(\theta)\equiv \frac{1}{N}\sum_{t=1}^N \Psi_{C_o(t)}(P_{\theta,C_o(t)})\\
&=&\frac{1}{N}\sum_{t=1}^N \Psi_{C_o(t)}(\theta).
\end{eqnarray*}
We note that $\bar{\Psi}(\theta)$ is a data dependent target parameter since its value depends on the realized $C_o(t)$, $t=1,\ldots,N$.

\paragraph{Remark:}
An important variation of the above formulation is to select $\Psi_{C_o(t)}:{\cal M}(C_o(t))^{np}\rightarrow\openr$ on a  more nonparametric model ${\cal M}(C_o(t))^{np}$ instead of ${\cal M}(C_o(t))$, while still utilizing the actual model ${\cal M}^N$ when we estimate the unknown $\theta$. That is, even though we assumed $\theta\in \Theta$, we might define
${\cal M}(C_o(t))^{np}=\{p_{\theta,C_o(t)}:\theta\in\Theta^{np}\}$ for some larger set $\Theta^{np}$. Although this will affect the efficiency of the estimator of our desired $\Psi^N(P^N)$, it might result in more robust estimators (e.g., ones that remain consistent as long as the conditional probability of treatment nodes are consistently estimated), due to its canonical gradient having a double robust structure. 
For example, $O(t)=(L(t,j),A(t,j):j=1,\ldots,K)$ might have $K$ treatment and $K$ time-dependent covariate nodes $A(t,j)$ and $L(t,j)$, respectively, ordered in time $j$ within the time block indicated by $t$. The model ${\cal M}^N$ might assume stationarity and/or Markov assumption in $j$ within the time-points of time-block $t$,  but we might nonetheless define the parameter $\Psi$ on the model ${\cal M}(C_o)^{np}$ that ignores these stationarity assumptions within the time-block $t$, while preserving conditional stationarity in $t$ of the density of $O(t)$, given $C_o(t)$.
\vspace{0.1in}


\subsection{Defining the TMLE}
We refer the reader to our Targeted Learning books and articles for a detailed description of the theory of TMLE \cite{book2018, tl2011, tmle2006}. 
We note that TMLE is a two-step procedure where one first obtains an estimate of the data-generating distribution, or the relevant parts of the data-generating distribution. The second stage updates the initial fit in a step targeted towards making an optimal bias-variance tradeoff for the parameter of interest, instead of the whole density. 
Following the empirical process literature, we define $P_N f$ to be the empirical average of function $f$, and $Pf = \mathbb{E}_Pf(O)$.

\vspace{2mm}
\noindent
{\bf Loss function:}
For a given $C_o(t)$, let $L_{C_o(t)}(\theta)(O(t))$ be a loss function for $\theta_{C_o(t)}$, defined such that we have the following:
\[
P_{\theta_0,C_o(t)}L_{C_o(t)}(\theta_{0,C_o(t)})=\min_{\theta_{C_o(t)}}P_{\theta_0,C_o(t)}L_{C_o(t)}(\theta_{C_o(t)})
\]
Therefore, given $C_o(t)$,  the true $\theta_{0,C_o(t)}$ minimizes the risk $P_{\theta_0,C_o(t)}L_{C_o(t)}(\theta_{C_o(t)})$ under $P_{\theta_0,C_o(t)}$. For notational simplicity, we will also denote this loss function with $L(\theta)$, but then it is viewed as a function $L(\theta):{\cal O}\times {\cal C}\rightarrow\openr$ of $(C_o(t),O(t))$, so that
$L_{C_o(t)}(\theta_{C_o(t)})(o)=L(\theta)(o,C_o(t))$.
For example, one might consider the log-likelihood loss function, 
 \[
 L_{C_o(t)}(\theta)(O(t))= L(\theta)(C_o(t),O(t)) = -\log p_{\theta,C_o(t)}(O(t))
 \]

\vspace{2mm}
\noindent
{\bf Least favorable submodel through initial estimator:}
Let $\theta_N$ be an initial estimator of $\theta_0$.
For a $\theta_N$ in our statistical model, we define a parametric working model $\{\theta_{N,\epsilon}:\epsilon\}$ through $\theta_N$ with finite-dimensional parameter $\epsilon$ so that $\epsilon = 0$ denotes $\theta_N$.  In particular, we define a parametric family of fluctuations of the initial estimator $\theta_N$ of $\theta_0$ with fluctuation parameter $\epsilon$, along with an appropriate loss function, so that the linear combination of the components of the derivate of the loss evaluated at $\epsilon = 0$ spans the efficient influence curve at the initial estimator. That is,  for a given $\theta_N$,  $\{\theta_{N,\epsilon}:\epsilon\}\subset \Theta$ is  a submodel of $\Theta$ through $\theta_N$ such that:
\begin{equation}\label{lfm}
\left \langle \left  .\frac{d}{d\epsilon}L(\theta_{N,\epsilon})\right |_{\epsilon =0} \right \rangle \supset D^*(\theta_N)
\end{equation}
where we used the notation $\langle S\rangle $ for the linear span of the components of the function $S$.
This is equivalent with stating that for each $C_o(t)$,  $\{\theta_{N,C_o(t),\epsilon}:\epsilon\}$ is a submodel through $\theta_{N,C_o(t)}=\theta_N(C_o(t),\cdot)$ at $\epsilon =0$, such that we have the following:
\[
\left \langle \left .  \frac{d}{d\epsilon}L_{C_o(t)}(\theta_{N,C_o(t),\epsilon}) \right |_{\epsilon =0}\right \rangle \supset D^*_{C_o(t)}(\theta_N)
\]
Therefore, for each $c\in {\cal C}$ and target parameter $\Psi_c:{\cal M}(c)\rightarrow\openr$, we have that $\{P_{\theta_{N,c,\epsilon},c}:\epsilon\}$ is a local least favorable submodel. Applied to $c=C_o(t)$ at a particular time $t$, this states that $\{P_{\theta_{N,C_o(t),\epsilon},C_o(t)}:\epsilon\}$ is a local least favorable submodel through $P_{\theta_{N,C_o(t)},C_o(t)}\in {\cal M}(C_o(t))$. 

\vspace{2mm}
\noindent
Alternatively, we could also define a universal least favorable submodel so that for each $C_o(t)$ and for all appropriate $\epsilon$,
\[
\frac{d}{d \epsilon}L_{C_o(t)}(\theta_{N,C_o(t), \epsilon}) =D^*_{C_o(t)}(\theta_{N,C_o(t), \epsilon})
\]
and equivalently, for all $\epsilon$ we have that:
\begin{equation}\label{ulfm}
\frac{d}{d\epsilon}L(\theta_{N,\epsilon}) =D^*(\theta_{N,\epsilon}) 
\end{equation}

\vspace{2mm}
\noindent
{\bf TMLE-update step:}
We define $\theta_N$ as an initial estimate of $\theta_0$, achieved using the super learner methodology based on the previously defined loss $\sum_t L(\theta)(C_o(t),O(t))$ and one of the appropriate cross-validation schemes for dependent settings \cite{sl2007, onlinecv2018}. 
 Given the initial estimator $\theta_N$ of $\theta_0$, we compute the minimum loss estimator (MLE) of $\epsilon$, given by:
\[
\epsilon_N=\arg\min_{\epsilon}\sum_t L(\theta_{N,\epsilon})(C_o(t),O(t))
\] 
If one uses a universal least favorable submodel, then, by (\ref{ulfm}), the score equation of this MLE yields:
\[
\sum_t D^*(\theta_{N,\epsilon_N})(C_o(t),O(t)) = 0
\]

\noindent
On the other hand, if one uses a local least favorable submodel, the updating process will need to be iterated. In particular, for $k=0$, let $\theta_N^0=\theta_N$ be the initial estimate of $\theta_0$. Similarly as before, we compute the MLE of $\epsilon$ at $\theta_N^k$ for as many $k$ as necessary:
\[
\epsilon_N^k=\arg\min_{\epsilon}\sum_t L(\theta_{N,\epsilon}^k)(C_o(t),O(t))
\]
we note that for $k=1$, this yields the first step TMLE, $\theta_N^1 =  \theta_{N,\epsilon_N^0}^0$. We iterate by $k\mapsto k+1$ and repeat the updating step until $\epsilon_N^k\approx 0$. The final update, denoted as $\theta_N^*$, is the TMLE of $\theta_0$. By (\ref{lfm}), the iterative TMLE also solves the efficient influence function estimating equation as follows:
\[
\sum_t D^*(\theta_N^*)(C_o(t),O(t))\approx 0
\]
\noindent
Below we define the second order remainder of a Taylor expansion of the target parameter $\Psi_{C_o(t)}$ at $P_{\theta_0,C_o(t)}$, and denote it with $R_{2,C_o(t)}(\theta_N,\theta_0)$. We conjecture that, under regularity conditions, if the initial estimator $\theta_N$ is consistent at a good rate  so that $\frac{1}{N}\sum_t R_{2,C_o(t)}(\theta_N,\theta_0)=o_P(N^{-1/2})$, then the efficient score equation will be solved in one step using the local least favorable submodel up until an $o_P(N^{-1/2})$ term. Such a result has been proved for the i.i.d. case in \citep{vanderLaan2018haltmle, vanderlaan2017efficTMLEwhal}. Therefore, given a good initial estimator, few iterations will be needed to approximately solve the efficient score equation. We define $\theta_N^*$ to be the one-step TMLE or the final update for the iterative TMLE. Then the efficient influence curve evaluated at $\theta_N^*$ has the following property:
\begin{equation}\label{g2}
\sum_t D^*(\theta_N^*)(C_o(t),O(t))=o_P(N^{-1/2})
\end{equation}

\noindent
Finally, the TMLE of $\bar{\Psi}(\theta_0)$ is given by the plug-in estimator $\bar{\Psi}(\theta_N^*)$. The efficient score equation (\ref{g2})  provides the basis for establishing asymptotic linearity and efficiency of $\bar{\Psi}(\theta_N^*)$, as carried out in the next section.

\section{Analysis of the TMLE}\label{sect2b}


\noindent
We define the following exact second order expansion for the $C_o(t)$-specific target parameter:
\begin{equation}\label{g1}
\Psi_{C_o(t)}(P_{\theta,C_o(t)}) - \Psi_{C_o(t)}(P_{\theta_0,C_o(t)}) =  (P_{\theta,C_o(t)} -P_{\theta_0,C_o(t)}) D^*_{C_o(t)}(\theta) + R_{2,C_o(t)}(\theta,\theta_0),
\end{equation}
where the remainder is defined as:
\begin{align*}
R_{2,C_o(t)}(\theta,\theta_0) &\equiv \Psi_{C_o(t)}(P_{\theta,C_o(t)}) - \Psi_{C_o(t)}(P_{\theta_0,C_o(t)}) + P_{\theta_0,C_o(t)} D^*_{C_o(t)}(\theta)
\end{align*}
For any setting one can verify that indeed $R_{2,C_o(t)}$ represents a second order difference between $P_{\theta_N,C_o(t)} $ and $P_{\theta_0,C_o(t)}$ for the given $C_o(t)$, which is a natural consequence of the pathwise differentiability of $\Psi_{C_o(t)}$. We emphasize that $P_{\theta,C_o(t)}D^*_{C_o(t)}(\theta)=0$. Combining the efficient score equation (\ref{g2}) with the second order expansion (\ref{g1}) of $\Psi_{C_o(t)}$, with $\theta$ being the TMLE $\theta_N^*$ we have the following exact second order expansion for our TMLE:
\begin{equation}\label{g1a}
\begin{aligned}
\frac{1}{N} \sum_t (\Psi_{C_o(t)}(\theta_N^*) - \Psi_{C_o(t)}(\theta_0)) &= \frac{1}{N} \sum_t (D^*(\theta_N^*)(C_o(t),O(t)) -P_{\theta_0,C_o(t)}D^*(\theta_N^*)) \\
&\phantom{{}=0} + \frac{1}{N}\sum_t R_{2,C_o(t)}(\theta_N^*,\theta_0)+o_P(N^{-1/2})
\end{aligned}
\end{equation}
We note that the left-hand side is also denoted with $\bar{\Psi}(\theta_N^*)-\bar{\Psi}(\theta_0)$, as defined in the previous section. 

\vspace{2mm}
\noindent
The leading term in the above expansion can be denoted as $M_N(\theta_N^*)$, for a Martingale process $(M_N(\theta):\theta)$ evaluated at $\theta_N^*$. In general, weak convergence of a process $(N^{1/2}M_N(\theta):\theta)$  to a Gaussian process  is equivalent with convergence of all finite dimensional distributions $N^{1/2}(M_N(\theta_1),\ldots,M_N(\theta_k))$ for a vector $(\theta_1,\ldots,\theta_k)$ and an asymptotic equicontinuity/tightness condition. The convergence of the finite dimensional distributions is immediately implied by the multivariate martingale central limit theorem. 
Asymptotic equicontinuity is typically defined as a statement that $M_N(\theta_N)-M_N(\theta^*)=o_P(N^{-1/2})$ if $d_N(\theta_N,\theta^*)\rightarrow_p 0$ for a specified dissimilarity $d_N()$ and limit $\theta^*$. 
This type of asymptotic equicontinuity allows us then to approximate $M_N(\theta_N^*)=M_N(\theta)+(M_N(\theta_N^*)-M_N(\theta^*))$ with
$M_N(\theta^*)+o_P(N^{-1/2})$. 

\vspace{2mm}
\noindent
Let ${\cal F}$ be a class of multivariate real valued functions of $(O,C)\in {\cal C}\times  {\cal O}$. Suppose that $ D^*(\theta_N^*)\in {\cal F}$ with probability tending to 1. We consider a martingale process $(M_N(f):f\in {\cal F})$ indexed by this class of functions ${\cal F}$  defined by: 
\[
M_N(f)= \frac{1}{N} \sum_{t=1}^N \{f(C_o(t),O(t))-P_{\theta_0,C_o(t)}f\}
\]
We note that for all $f \in \cal F$,  $N M_N(f)$ is a discrete martingale in $N$. In our application, we have  ${\cal F}=\{D^*(\theta):\theta\in \Theta\}$. To establish the asymptotic equicontinuity, we could rely on a maximal inequality for martingales, Proposition A.2 in Handel, used in  \cite{handel2009, chambaz2010} to prove that $M_N(f_N)-M_N(f^*)=o_P(N^{-1/2})$ if $1/N\sum_t P_{\theta_0,C_o(t)}\{f_N-f^*\}^2(C_o(t),O(t))\rightarrow_p 0$. However, this maximal inequality would rely on the class of functions ${\cal F}$ to have a finite entropy integral, as defined below, with respect to the bracketing entropy. Since we want to allow that ${\cal F}$ contains all cadlag functions with a sectional variation norm bounded by a universal constant, which is a convex hull of indicator functions, we only have that the entropy integral w.r.t. covering number is bounded. Fortunately, \citep{chambaz2017tsoit} establish this desired asymptotic equicontinuity for classes of functions ${\cal F}$ for which the regular entropy integral is bounded. 

\vspace{2mm}
\noindent
Let $N(\epsilon, {\cal F}, L^2(P))$ denote the covering number, defined as the number of balls of size $\epsilon$ needed to cover $\cal F$ embedded in Hilbert space $L^2(P)$. We impose the entropy conditions on $\cal F$ such that:
\begin{equation}\label{entropyintegral}
\int_0^1 \sup_P \sqrt{\log N(\epsilon, \mathcal{F}, L^2(P) ) d \epsilon} < \infty
\end{equation}



\vspace{3mm}
\noindent
\begin{lemma}\label{lemmaasympequi}
\textbf{Asymptotic Equicontinuity of a Martingale Process} \\
\textit{Let $\frac{1}{N} \sum_t P_{\theta_0,C_o(t)} \{f_N(C_o(t),O(t)) - f^*(C_o(t),O(t)) \}^2 \xrightarrow{p} 0$. Under the above entropy condition (\ref{entropyintegral}) on ${\cal F}$, $M_N()$ is asymptotically equicontinuous w.r.t. a dissimilarity measure so that: 
\[M_N(f_N)-M_N(f^*)=o_P(N^{-1/2})
\] where $f^*(C_o(t),O(t))$ is the limit for $f_N(C_o(t),O(t))$.}
 \end{lemma}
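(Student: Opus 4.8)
The plan is to reduce this martingale asymptotic equicontinuity statement to the empirical-process machinery of \citet{chambaz2017tsoit}, which (as noted in the text immediately preceding the lemma) establishes precisely this kind of equicontinuity for martingale processes indexed by classes $\mathcal{F}$ whose \emph{regular} (covering-number) entropy integral is bounded. The key structural fact I would lean on is that $N M_N(f)$ is a discrete martingale in $N$ for each fixed $f$: the increment at time $t$ is $f(C_o(t),O(t)) - P_{\theta_0,C_o(t)} f$, which has conditional mean zero given $\bar{O}(t-1)$ precisely because $C_o(t) = C_o(\bar{O}(t-1))$ is $\bar{O}(t-1)$-measurable and $P_{\theta_0,C_o(t)}$ is the conditional law of $O(t)$ given the past. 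So the whole object lives naturally in the martingale framework, and the dissimilarity measure governing the modulus of continuity is exactly the conditional $L^2$ pseudo-distance $d_N(f,g)^2 = \frac{1}{N}\sum_t P_{\theta_0,C_o(t)}\{f-g\}^2$ appearing in the hypothesis.

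The steps I would carry out are as follows. First, I would verify the martingale-increment structure just described, so that the sequence $(N M_N(f))_N$ qualifies as a discrete-time martingale and the conditional variance process is $\frac{1}{N}\sum_t P_{\theta_0,C_o(t)}\{f\}^2$ (after centering). Second, I would invoke the maximal inequality for martingale empirical processes — the text points to Proposition A.2 in \citet{handel2009}, but crucially I would use the version from \citet{chambaz2017tsoit} that requires only the regular entropy integral (\ref{entropyintegral}) to be finite, rather than a bracketing-entropy integral. This is the essential refinement, since the text explains that the class $\mathcal{F}$ of interest (cadlag functions with sectional variation norm bounded by a constant, a convex hull of indicators) has a bounded covering-number entropy integral but not a bounded bracketing-entropy integral. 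Third, I would translate the conclusion of that maximal inequality into a modulus-of-continuity bound: for any $\delta > 0$, the expected supremum of $N^{1/2}\{M_N(f) - M_N(g)\}$ over pairs with $d_N(f,g) \le \delta$ is controlled by the entropy integral truncated at $\delta$, which tends to zero as $\delta \to 0$. Fourth, I would combine this uniform modulus bound with the hypothesis $d_N(f_N, f^*) \to_p 0$ to conclude $M_N(f_N) - M_N(f^*) = o_P(N^{-1/2})$ by a standard argument: restrict to the event $\{d_N(f_N,f^*) \le \delta\}$ (which has probability tending to one), apply the modulus bound, and let $\delta \to 0$.

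The main obstacle, and the place where real care is needed, is the transition from the i.i.d. empirical-process setting to the dependent, martingale-indexed, data-dependent-norm setting. Two issues compound here. The dissimilarity $d_N$ is itself random (it averages the conditional second moments over the realized trajectory $C_o(1),\ldots,C_o(N)$), so the ``distance'' defining the modulus of continuity is not a fixed metric but a data-dependent one, and I must ensure the maximal inequality is stated and applied with this random pseudo-metric rather than a population norm. Relatedly, the entropy condition (\ref{entropyintegral}) is imposed uniformly over all measures $P$ via the supremum, which is exactly what is needed to dominate the random $d_N$-balls by a fixed entropy bound; I would need to confirm that the $\sup_P$ in (\ref{entropyintegral}) provides a uniform envelope covering every realization of $d_N$. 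Provided the cited martingale maximal inequality of \citet{chambaz2017tsoit} is genuinely stated for the conditional-variance pseudo-distance (which the surrounding text asserts it is), the rest is a routine chaining-plus-truncation argument; the burden of the proof is really in checking that the hypotheses of that external inequality are met by $\mathcal{F} = \{D^*(\theta):\theta\in\Theta\}$ and by the martingale $M_N$, rather than in any new chaining computation.
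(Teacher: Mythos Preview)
Your proposal is correct and aligns with the paper's treatment: the paper does not give an independent proof of this lemma but simply states that the proof is given in \cite{chambaz2017tsoit}, which is precisely the external result you propose to invoke. Your additional sketch of the martingale structure, the covering-number (rather than bracketing) entropy requirement, and the chaining/modulus-of-continuity argument is consistent with the machinery in that reference and with the discussion preceding the lemma.
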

 
\noindent
The proof of Lemma \ref{lemmaasympequi} is given in \citep{chambaz2017tsoit}. An application of Lemma \ref{lemmaasympequi} to our martingale process $(M_N(\theta):\theta)$ and corresponding class ${\cal F}=\{D^*(\theta):\theta\in \Theta\}$ provides us with consistency condition (C2 below) on $\theta_N^*$ so that $M_N(\theta_N^*)=M_N(\theta^*)+o_P(N^{-1/2})$.

\vspace{3mm}
\noindent
\textbf{Asymptotic linearity or negligibility of the remainder term:}  We assume that the limit  $\theta^*$ satisfies $R_{2,C_o(t)}(\theta^*,\theta_0)=0$ for all $C_o(t)$. Then, the remainder $\frac{1}{N}\sum_t R_{2,C_o(t)}(\theta_N^*,\theta_0)$
represents a term that converges to zero. Additionally, we assume that this remainder term can be represented by a martingale sum $M_N(f_1)$ for some $f_1=f_1(\theta^*,\theta_0)$ plus a second order term that is $o_P(N^{-1/2})$, where $$M_N(f_1)=\frac{1}{N} \sum_t \{f_1(O(t),C_o(t))-P_{0,C_o(t)}f_1\}.$$

\noindent
We note that a special case for Condition 3 corresponds to $\theta^*=\theta_0$, in which case this assumption is equivalent to assuming $\frac{1}{N}\sum_t R_{2,C_o(t)}(\theta_N^*,\theta_0)=o_P(N^{-1/2})$. Under the entropy, consistency, and the above condition on the remainder term it follows that:
\begin{eqnarray*}
\bar{\Psi}(\theta_N^*)-\bar{\Psi}(\theta_0) &=&
\frac{1}{N}\sum_t\left\{ D^*(\theta^*)(C_o(t),O(t))-P_{\theta_0,C_o(t)}D^*(\theta^*) \right\}\\
&&+\frac{1}{N}\sum_t\left\{f_1(C_o(t),O(t))-P_{\theta_0,C_o(t)}f_1\right\}+o_P(N^{-1/2})
\end{eqnarray*}
The right-hand side is a discrete martingale up until an $o_P(N^{-1/2})$, whose standardized version converges to a normal limit distribution. 

\vspace{2mm}
\noindent
To summarize the proof above, we formally state Theorem \ref{thgen}, establishing asymptotic normality for the averages of context-specific target parameters under conditions (C1-C4):

\vspace{2mm}
\noindent
\begin{enumerate}
\item Define $M_N(\theta)$ as $M_N(\theta)=\frac{1}{N}\sum_t D^*(\theta)(C_o(t),O(t))-P_{\theta_0,C_o(t)}D^*(\theta)$. 
Let ${\cal F}=\{D^*(\theta):\theta\in \Theta\}$ and assume the entropy condition $$\int_0^1 \sup_P \sqrt{\log N(\epsilon, \mathcal{F}, L^2(P) ) d \epsilon} < \infty .$$ 
Then the martingale process $M_N(\theta)$ indexed by $\theta\in \Theta$ is asymptotic equicontinuous in the following sense: $M_N(\theta_N^*)-M_N(\theta^*)=o_P(N^{-1/2})$ if $\frac{1}{N} \sum_t P_{\theta_0,C_o(t)}\{D^*(\theta_N^*)-D^*(\theta^*)\}^2\rightarrow_p 0$.
\item For a limit $\theta^*$ satisfying $R_{2,C_o(t)}(\theta^*,\theta_0)=0$ with probability 1, $\frac{1}{N} \sum_t P_{\theta_0,C_o(t)}\{D^*(\theta_N^*)-D^*(\theta^*)\}^2\rightarrow_p 0$. 
\item Assume $\frac{1}{N}\sum_t R_{2,C_o(t)}(\theta_N^*,\theta_0)=M_n(f_1)+o_P(N^{-1/2})$ for some $f_1=f_1(\theta^*,\theta_0)$ and martingale process $M_N(f)=\frac{1}{N} \sum_t \{f(O(t),C_o(t))-P_{\theta_0,C_o(t)}f\}$.
\item Let $\bar{f}=D^*(\theta^*)+f_1(\theta^*,\theta_0)$. We assume $\frac{1}{N}\sum_{t=1}^N P_{0,C_o(t)}\bar{f}^2\rightarrow \sigma^2_0$ 
as $N\rightarrow\infty$ a.s.
\end{enumerate}

\noindent
\begin{theorem}[Asymptotic normality of TMLE]\label{thgen}\ 
Let $\theta_N^*$ be the one-step TMLE or the iterative TMLE, so that $\sum_t D^*(\theta_N^*)(C_o(t),O(t))=o_P(N^{-1/2})$. Further, assume that $D^*(\theta^*_N) \in \cal F$ with probability tending to 1, for a class of functions $\cal F$ satisfying the entropy integral condition (\ref{entropyintegral}) of C1 above. If also C2, C3 and C4 hold, then:
\vspace{3mm}
\[
\sqrt{N}(\bar{\Psi}(\theta_N^*)-\bar{\Psi}(\theta_0))\Rightarrow N(0,\sigma^2_0)
\] 

\vspace{3mm}
\noindent
where  $\sigma^2_0$ is the limit of $\frac{1}{N} \sum_t \bar{f}^2(C_o(t),O(t))$ and $\bar{f}=D^*(\theta^*)+f_1(\theta^*,\theta_0)$.
\end{theorem}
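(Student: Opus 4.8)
The plan is to take the exact second-order expansion (\ref{g1a}) as the point of departure and collapse its left-hand side into a single centered martingale sum plus an $o_P(N^{-1/2})$ term, after which a martingale central limit theorem produces the stated Gaussian limit. Writing $M_N(\theta)=\frac{1}{N}\sum_t\{D^*(\theta)(C_o(t),O(t))-P_{\theta_0,C_o(t)}D^*(\theta)\}$, the identity (\ref{g1a}) reads
\[
\bar{\Psi}(\theta_N^*)-\bar{\Psi}(\theta_0)=M_N(\theta_N^*)+\frac{1}{N}\sum_t R_{2,C_o(t)}(\theta_N^*,\theta_0)+o_P(N^{-1/2}),
\]
where the TMLE score property $\sum_t D^*(\theta_N^*)(C_o(t),O(t))=o_P(N^{-1/2})$ has already been used to discard the empirical-score contribution.

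First I would linearize the leading term. By hypothesis $D^*(\theta_N^*)\in{\cal F}$ with probability tending to one, and condition C2 supplies the $L^2$-consistency $\frac{1}{N}\sum_t P_{\theta_0,C_o(t)}\{D^*(\theta_N^*)-D^*(\theta^*)\}^2\rightarrow_p 0$ toward the limit $\theta^*$ satisfying $R_{2,C_o(t)}(\theta^*,\theta_0)=0$. This is exactly the trigger required by the asymptotic equicontinuity of Lemma \ref{lemmaasympequi} under the entropy condition C1, which therefore yields $M_N(\theta_N^*)=M_N(\theta^*)+o_P(N^{-1/2})$. Next I would dispose of the remainder using condition C3, which represents $\frac{1}{N}\sum_t R_{2,C_o(t)}(\theta_N^*,\theta_0)$ as $M_N(f_1)+o_P(N^{-1/2})$ for a fixed $f_1=f_1(\theta^*,\theta_0)$. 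Combining these two reductions with the linearity of $f\mapsto M_N(f)$ gives $\bar{\Psi}(\theta_N^*)-\bar{\Psi}(\theta_0)=M_N(\bar f)+o_P(N^{-1/2})$ with $\bar f=D^*(\theta^*)+f_1(\theta^*,\theta_0)$.

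It then remains to determine the limit distribution of $\sqrt{N}\,M_N(\bar f)$. The key structural point is that, with respect to the natural filtration ${\cal F}_{t-1}=\sigma(\bar O(t-1))$, each summand $\bar f(C_o(t),O(t))-P_{\theta_0,C_o(t)}\bar f$ is a martingale difference: since $C_o(t)$ is ${\cal F}_{t-1}$-measurable and the conditional law of $O(t)$ given ${\cal F}_{t-1}$ is $P_{\theta_0,C_o(t)}$, the centering term equals $E_{\theta_0}[\bar f(C_o(t),O(t))\mid{\cal F}_{t-1}]$. Hence $N\,M_N(\bar f)$ is a genuine discrete martingale and I would invoke the martingale central limit theorem for triangular arrays. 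Condition C4 furnishes the convergence of the normalized predictable quadratic variation, $\frac{1}{N}\sum_t P_{\theta_0,C_o(t)}\bar f^2\rightarrow\sigma_0^2$ (in the well-specified case $\theta^*=\theta_0$ the conditional mean $P_{\theta_0,C_o(t)}\bar f$ vanishes, so this is literally the limiting conditional variance, while in general one works with the centered differences and absorbs the squared conditional means into the same limit), identifying $\sigma_0^2$ as the asymptotic variance.

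The main obstacle I anticipate is the one hypothesis of the martingale CLT that C1--C4 do not state explicitly, namely a conditional Lindeberg (uniform asymptotic negligibility) condition on the array $N^{-1/2}\{\bar f(C_o(t),O(t))-P_{\theta_0,C_o(t)}\bar f\}$. I would discharge it by imposing a uniform moment bound on ${\cal F}$, for instance a uniform $L^{2+\delta}$ bound on the centered differences compatible with the bounded-variation/entropy framework of C1, so that
\[
\frac{1}{N}\sum_t E_{\theta_0}\!\left[(\bar f-P_{\theta_0,C_o(t)}\bar f)^2\,\mathbf{1}\{|\bar f-P_{\theta_0,C_o(t)}\bar f|>\varepsilon\sqrt N\}\,\Big|\,{\cal F}_{t-1}\right]\rightarrow_p 0 .
\]
A secondary subtlety worth flagging is that $\bar\Psi(\theta_0)$, $\bar f$, and $\sigma_0^2$ are all defined through the realized contexts $(C_o(1),\ldots,C_o(N))$ and are therefore data-dependent; this is harmless for the martingale argument, but the almost-sure convergence in C4 is precisely what makes $\sigma_0^2$ a well-defined deterministic scale along almost every sample path, so the final standardization $\sqrt N\,M_N(\bar f)/\sigma_0\Rightarrow N(0,1)$ should be stated with that caveat.
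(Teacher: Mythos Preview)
Your proposal is correct and follows essentially the same route as the paper's proof: start from the expansion (\ref{g1a}), replace $M_N(\theta_N^*)$ by $M_N(\theta^*)$ via Lemma~\ref{lemmaasympequi} under C1--C2, absorb the remainder into $M_N(f_1)$ via C3, and apply the martingale CLT with variance identified by C4. If anything, your treatment is more careful than the paper's, which invokes the martingale CLT without explicitly checking a Lindeberg-type condition; your remark that a uniform moment bound (implicit in the bounded-variation class of C1) discharges it is a welcome addition.
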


\vspace{3mm}
\begin{proof} For completeness, we present here the formal proof, following the steps above.
The definition of $\theta_N^*$ combined with C1 and C2 yield that:
\begin{eqnarray*}
\bar{\Psi}(\theta_N^*)-\bar{\Psi}(\theta_0)&=&
\frac{1}{N}\sum_t\left\{ D^*(\theta^*)(C_o(t),O(t))-P_{\theta_0,C_o(t)}D^*(\theta^*)\right\}\\
&&\hspace*{+0.5cm}
+ \  o_P(N^{-1/2}) + \frac{1}{N} \sum_t R_{2,C_o(t)}(\theta^*_N, \theta_0)
\end{eqnarray*}

\noindent
Further, by C3 we can see that:
\begin{eqnarray*}
\bar{\Psi}(\theta_N^*)-\bar{\Psi}(\theta_0)&=&
\frac{1}{N}\sum_t\left\{ D^*(\theta^*)(C_o(t),O(t))-P_{\theta_0,C_o(t)}D^*(\theta^*)\right\}\\
&&\hspace*{-3cm}
+\frac{1}{N}\sum_t\left\{f_1(\theta^*,\theta_0)(C_o(t),O(t))-P_{\theta_0,C_o(t)}f_1(\theta^*,\theta_0)\right\}+o_P(N^{-1/2})
\end{eqnarray*}

\noindent
The sum of the two terms on the right hand side is a  discrete martingale $M_N(\bar{f})$ with $\bar{f}$ defined as $\bar{f}=D^*(\theta^*)+f_1(\theta^*,\theta_0)$. By the martingale central limit theorem, $\bar{f}$ converges to a centered Gaussian distribution  with covariance $\lim_{t \rightarrow \infty}\frac{1}{N} \sum_t \bar{f}^2(C_o(t),O(t))$.
\end{proof}

\subsection{Class of cadlag functions with uniformly bounded sectional variation norm}
An important class for which the entropy integral (\ref{entropyintegral}) of condition C1 is finite is the class of multivariate real valued cadlag functions on $[0,\tau]\subset\openr^k$ with a uniform bound on its finite sectional variation norm defined by: 
\[
\pl f\pl_v^* \ \equiv \ \mid f(0)\mid+\sum_{s\subset\{1,\ldots,k\}}\int_{(0_s,\tau_s]} \mid df_s(u_s)\mid \]
Here, $f_s$ is the $s$-specific section $f_s(x)=f(x_s,0_{-s})$ that sets the coordinates in the complement of $s$ equal to $0$, and $df_s(u_s)$ denotes integration w.r.t. measure generated by this $s$-specific section on $(0_s,\tau_s]$. 
The sum is over all subsets $s$ of $\{1,\ldots,k\}$.
Any such function for which $\pl f\pl_v^* < \infty$ can be represented as $f(x)=f(0)+\sum_{s\subset\{1,\ldots,k\}}\int_{(0_s,x_s]} df_s(u_s)$ \cite{gill1995}. The latter representation theorem shows that this class of functions is a convex hull of the indicator class $\{X_s\rightarrow I(X_s>u_s): u_s,s\}$, and a fundamental theorem in \citep{vanderVaart&Wellner96} shows that a convex hull of a Donsker class is a Donsker class itself, where Donsker class can be defined as a class of functions for which the entropy integral (\ref{entropyintegral}) is finite. 

\subsection{The HAL-MLE as initial estimator}
Let $L(\theta)(O(t),C_o(t))$ be a loss function for $\theta$ so that $\theta_0=\arg\min_{\theta\in \Theta} \frac{1}{N} \sum_{t=1}^N P_{\theta_0,C_o(t)}L(\theta)$.
Suppose that $\Theta$ is contained in a class of multivariate real valued cadlag functions on a cube $[0,\tau]$ with sectional variation norm bounded by a universal constant $C^u<\infty$.
We assume that for each $\theta$, $L(\theta)$ is a multivariate real valued cadlag function on a cube $[0,\tau_o]$ whose sectional variation norm can be bounded by the sectional variation norm of $\theta$ in the sense that $\sup_{\theta\in \Theta}\pl L(\theta)\pl_v^*/\pl \theta\pl_v^*<\infty$.
Then, $\{L(\theta):\theta\in\Theta\}$ is contained in a class of multivariate real valued cadlag functions on cube $[0,\tau_o]$ bounded by a universal constant $C^u_o<\infty$.
Let $\Theta(C)=\{\theta\in \Theta:\pl \theta\pl_v^*\leq C\}$ be a constrained subset of $\Theta$ by constraining the sectional variation norm to a number $C$ smaller or equal than the known upper-bound $C^u$.
Consider the $C$-specific MLE defined as:
\[
\theta_{C,n}=\arg\min_{\theta\in \Theta(C)}\frac{1}{N}\sum_{t=1}^N L(\theta)(O(t),C_o(t))\]
Further, let $\theta_{C,0}=\arg\min_{\theta\in \Theta(C)}\frac{1}{N}\sum_{t=1}^N P_{\theta_0,C_o(t)}L(\theta)$. We define loss-based dissimilarity implied by loss-function  $L(\theta)$ as:
\[
d_{0,N}(\theta,\theta_0)=\frac{1}{N}\sum_{t=1}^N P_{\theta_0,C_o(t)}\{L(\theta)-L(\theta_0)\}\]
For further notational convenience, we will also use the notation $L(\theta,\theta_0)=L(\theta)-L(\theta_0)$.


\begin{theorem}[Minimum loss-based estimator in class of cadlag functions with finite variation norm]\label{thhal}
Let $L(\theta)(O_t,C_o(t))$ be a loss function for $\theta$, and $\Theta(C)=\{\theta\in \Theta:\pl \theta\pl_v^*\leq C\}$ be the set of cadlag functions with variation norm smaller than $C$. We define $d_{0,N}(\theta,\theta_0)$ as the loss-based dissimilarity for $L(\theta)(O_t,C_o(t))$. If:
\begin{enumerate}
\item $\sup_{\theta\in \Theta}\pl L(\theta)\pl_v^*/\pl \theta\pl_v^*<\infty$;
\item $d_{0,N}(\theta_{C,N},\theta_{C,0})\rightarrow_p 0$ implies $\frac{1}{N}\sum_{t=1}^N P_{\theta_0,C_o(t)}L(\theta_{C,N},\theta_{C,0})^2\rightarrow_p 0$,
\end{enumerate}
then $d_{0,N}(\theta_{C,N},\theta_{C,0})=o_P(N^{-1/2})$.
\end{theorem}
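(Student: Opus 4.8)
The plan is to run the standard minimum-loss-estimator argument, but with the i.i.d.\ empirical process replaced by the martingale process $M_N$ from Lemma \ref{lemmaasympequi}. Write $P_N L(\theta)=\frac{1}{N}\sum_t L(\theta)(O(t),C_o(t))$ for the empirical risk. First I would record two elementary facts. Since $\theta_{C,0}$ minimizes the population risk $\frac{1}{N}\sum_t P_{\theta_0,C_o(t)}L(\theta)$ over $\Theta(C)$ and $\theta_{C,N}\in\Theta(C)$, the dissimilarity is nonnegative, $d_{0,N}(\theta_{C,N},\theta_{C,0})\geq 0$; and since $\theta_{C,N}$ minimizes the empirical risk over the same set, $P_N\{L(\theta_{C,N})-L(\theta_{C,0})\}\leq 0$. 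Decomposing into martingale and empirical parts,
\begin{equation*}
d_{0,N}(\theta_{C,N},\theta_{C,0})= -M_N\big(L(\theta_{C,N})-L(\theta_{C,0})\big)+P_N\{L(\theta_{C,N})-L(\theta_{C,0})\},
\end{equation*}
and dropping the nonpositive second term yields the basic inequality
\begin{equation*}
0\leq d_{0,N}(\theta_{C,N},\theta_{C,0})\leq \big|M_N\big(L(\theta_{C,N})-L(\theta_{C,0})\big)\big|.
\end{equation*}
Everything then reduces to controlling the martingale increment on the right.

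The second step is to place the loss class inside a class for which the entropy integral (\ref{entropyintegral}) holds. By Condition 1, $\pl L(\theta)\pl_v^*\leq K\pl\theta\pl_v^*\leq KC$ for all $\theta\in\Theta(C)$, so $\mathcal{F}_L=\{L(\theta):\theta\in\Theta(C)\}$ is contained in the class of cadlag functions with sectional variation norm bounded by $C_o^u=KC$; by the representation/Donsker result of the preceding subsection, this class satisfies (\ref{entropyintegral}). Consequently $M_N$ indexed by $\mathcal{F}_L$ is a Donsker-type martingale process, so by the uniform (Glivenko--Cantelli-type) law $\sup_{f\in\mathcal{F}_L}|M_N(f)|\rightarrow_p 0$ (indeed it is $O_P(N^{-1/2})$ by tightness of $N^{1/2}M_N$). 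Since $M_N$ is linear, $\big|M_N(L(\theta_{C,N})-L(\theta_{C,0}))\big|\leq 2\sup_{f\in\mathcal{F}_L}|M_N(f)|\rightarrow_p 0$, which combined with the basic inequality already gives consistency, $d_{0,N}(\theta_{C,N},\theta_{C,0})\rightarrow_p 0$.

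The third step bootstraps this crude bound into the sharp rate. Consistency in dissimilarity together with Condition 2 gives the $L^2$ convergence $\frac{1}{N}\sum_t P_{\theta_0,C_o(t)}\{L(\theta_{C,N})-L(\theta_{C,0})\}^2\rightarrow_p 0$, which is exactly the hypothesis of Lemma \ref{lemmaasympequi} applied with $f_N=L(\theta_{C,N})$ and $f^*=L(\theta_{C,0})$. Because $\mathcal{F}_L$ obeys the entropy integral, the lemma upgrades the crude $O_P(N^{-1/2})$ bound to the asymptotic-equicontinuity statement $M_N(L(\theta_{C,N}))-M_N(L(\theta_{C,0}))=o_P(N^{-1/2})$. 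Feeding this back into the basic inequality gives $0\leq d_{0,N}(\theta_{C,N},\theta_{C,0})\leq o_P(N^{-1/2})$, which is the claim.

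The main obstacle is the careful use of the martingale empirical-process machinery with a data-dependent index $\theta_{C,N}$ and a reference $\theta_{C,0}$ that itself depends on $N$ through the realized $(C_o(1),\ldots,C_o(N))$. Both the uniform law used for consistency and the equicontinuity of Lemma \ref{lemmaasympequi} must therefore be invoked in their drifting-sequence form, controlling $M_N(f_N)-M_N(g_N)$ for pairs with vanishing $L^2$-dissimilarity rather than relying on a fixed limit; this is precisely the regime covered by the maximal-inequality and equicontinuity results cited for $M_N$. All of this rests on verifying the entropy integral (\ref{entropyintegral}) for $\mathcal{F}_L$, which is why Condition 1 is imposed, and on Condition 2, which supplies the curvature bridge converting first-order dissimilarity convergence into the $L^2$ convergence of loss differences that triggers the equicontinuity.
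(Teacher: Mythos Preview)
Your proposal is correct and follows essentially the same route as the paper: derive the basic inequality $0\leq d_{0,N}(\theta_{C,N},\theta_{C,0})\leq -\{M_N(\theta_{C,N})-M_N(\theta_{C,0})\}$ from the two minimizer properties, use the uniform $O_P(N^{-1/2})$ bound on the martingale process over the variation-norm-bounded loss class to obtain consistency, invoke Condition~2 to convert this into $L^2$ convergence of the loss difference, and then apply Lemma~\ref{lemmaasympequi} to upgrade the bound to $o_P(N^{-1/2})$. Your added remark about $\theta_{C,0}$ depending on $N$ and the need for the drifting-sequence form of the equicontinuity is a useful observation that the paper leaves implicit.
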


\vspace{2mm}
\noindent

\begin{proof}
Let $M_N(\theta)=\frac{1}{N}\sum_{t=1}^N \{ L(\theta)-P_{\theta_0,C_o(t)}L(\theta)\}$ define the martingale process $M_N(\theta)$. Note that $M_N(\theta)$ is a martingale process indexed by a class of multivariate real valued cadlag functions with a uniform bound on the sectional variation norm. Its asymptotic equicontinuity is presented in Lemma \ref{lemmaasympequi} stated previously. It follows that:
\begin{eqnarray*}
0&\leq& d_{0,N}(\theta_{C,N},\theta_{C,0})\\
&=& \frac{1}{N}\sum_{t=1}^N P_{\theta_0,C_o(t)}L(\theta_{C,N},\theta_{C,0})\\
&=&-\frac{1}{N}\sum_{t=1}^N \left\{ L(\theta_{C,N},\theta_{C,0})-P_{\theta_0,C_o(t)}L(\theta_{C,N},\theta_{C,0})\right\}\\
&&+\frac{1}{N}\sum_{t=1}^N L(\theta_{C,N},\theta_{C,0})\\
&\leq&-\frac{1}{N}\sum_{t=1}^N \left\{ L(\theta_{C,N},\theta_{C,0})-P_{\theta_0,C_o(t)}L(\theta_{C,N},\theta_{C,0})\right\}\\
&\equiv&-\left\{ M_N(\theta_{C,N})-M_N(\theta_{C,0})\right\},
\end{eqnarray*}
where the first and second inequality both follow from the definition of $\theta_{C,N}$ as the minimizer of $\frac{1}{N}\sum_{t=1}^N L(\theta)(O(t),C_o(t))$ over all $\theta\in \Theta(C)$. We know that $\sup_{\theta\in \Theta(C)}\mid M_N(\theta)\mid =O_P(N^{-1/2})$, and if $\frac{1}{N}\sum_{t=1}^N P_{\theta_0,C_o(t)} \{L(\theta_N,\theta_0)\}^2 \rightarrow_p 0$, then we have that $M_N(\theta_N)-M_N(\theta_0)=o_P(N^{-1/2})$ by Lemma \ref{lemmaasympequi}.
The first statement proves that $d_{0,N}(\theta_{C,N},\theta_{C,0})=O_P(n^{-1/2})$. By assumption, we also have that  $d_{0,N}(\theta_{C,N},\theta_{C,0})=o_P(1)$ implies $\frac{1}{N}\sum_{t=1}^N P_{\theta_0,C_o(t)} \{L(\theta_{C,N},\theta_{C,0})\}^2 \rightarrow_p 0$. Therefore by asymptotic equicontinuity of $M_N(\theta)$ we have that $M_N(\theta_{C,N})-M_N(\theta_{C,0})=o_P(N^{-1/2})$. This proves that $d_{0,N}(\theta_{C,N},\theta_{C,0})=o_P(N^{-1/2})$. 
\end{proof}

\noindent
We note that using the exact entropy bound for the class of multivariate cadlag functions with uniformly bounded sectional variation norm we can also derive a more precise rate of $o_P(N^{-1/2-\alpha(d_o)})$, for an $\alpha(d)$ that behaves as $1/d$ and $d_o$ being the dimension of $O$. In conclusion,  if we use the HAL-MLE $\theta_N=\theta_{C_N,N}$ with cross-validation to select $C$ as the initial estimator of $\theta_0$ in the definition of the TMLE, then it will generally follow that  $\frac{1}{N}\sum_t R_{2,C_o(t)}(\theta_N^*,\theta_0)=o_P(N^{-1/2})$. This is an important result, as it is the main condition for the asymptotic normality and efficiency of the TMLE in Theorem \ref{thgen}.


\section{Context-specific causal effects of single-time point intervention}\label{sect3}

In this section we apply our general statistical formulation and TMLE (as described in great generality in Section 2) to a specific data structure $O(t)=(A(t),Y(t),W(t))$, context-specific model ${\cal M}(C_o(t))$, and target parameter $\Psi_{C_o(t)}$ common to causal inference literature. In particular, we define $\Psi_{C_o(t)}$ as the estimand identifying the average causal effect of a single time-point intervention $A(t)$ on the next outcome $Y(t)$ conditional on $C_o(t)$. We define ${\cal M}(C_o(t))$ as the nonparametric statistical model, relying only on possible knowledge on the conditional distribution of the treatment node $A(t)$ in $O(t)$. We proceed to define the efficient influence curve and exact second order expansion for this target parameter $\Psi_{C_o(t)}$, relying on the results from the i.i.d. literature. Subsequently, we proceed to establish the TMLE of the average over time of these context-specific causal effects, and apply our general Theorem 1 to analyze this TMLE.

\subsection{Statistical formulation}
{\bf Data:}
Let the observed data be $O(t)=(A(t),Y(t),W(t))$, $t=1,\ldots,N$, where $O(t)$ is of a fixed dimension in time $t$, and is an element of a Euclidean set ${\cal O}$. Let $A(t)\in \{0,1\}$ be a binary treatment, $Y(t)$ subsequent outcome that is either a binary outcome in $\{0,1\}$ or bounded continuous outcome in $(0,1)$. Additionally, we denote $W(t)$ as all other information collected after $A(t)$ that will be included in the history $C_o(t+1)$ for the next record $O(t+1)$, with history being defined as in Section 2.1. Finally, let $O^N=(O(t):t=1,\ldots,N)$ and let $P^N$ denote a possible probability measure. As before, we can factorize the probability density of the data according to the time ordering as follows:
\begin{align*}
p^N(o) &= \prod_{i=1}^{N}p_{a(t)}(a(t) | \bar{o}(t-1))  \prod_{i=1}^{N}p_{y(t)}(y(t) | \bar{o}(t-1),a(t)) \\ 
&\phantom{{}=0} \prod_{i=1}^{N}p_{w(t)}(w(t) | \bar{o}(t-1),y(t),a(t)) .
\end{align*}
Here, $p_{a(t)}$, $p_{y(t)}$ and $p_{w(t)}$ denote the conditional probability densities of $A(t)$, $Y(t)$ and $W(t)$ given the relevant past. We define $\mu_a$, $\mu_y$ and $\mu_w$ as the corresponding dominating measures. 

\vspace{2mm}
\noindent
{\bf Statistical model for time-series:}
We assume that $P_{O(t)\mid \bar{O}(t-1)}$ depends on $\bar{O}(t-1)$ through a summary measure $C_o(t)=C_o(\bar{O}(t-1))\in {\cal C}$ of fixed dimension. For notational convenience, we define the conditional distribution with $P_{C_o(t)}$. From the likelihood stated above, we can see that the density $p_{C_o(t)}(a(t),y(t),w(t)\mid C_o(t))$ factorizes into three conditional densities corresponding to $A(t)$, $Y(t)$, and $W(t)$, respectively. We denote these densities with $g_{a(t)}$, $q_{y(t)}$ and $q_{w(t)}$ to follow typical notation used in our previous work in the i.i.d. causal inference literature. We also define $C_a(t)=C_o(t)$, $C_y(t)=(C_o(t),A(t))$ and $C_w(t)=(C_o(t),A(t),Y(t))$ to be their corresponding fixed-dimensional relevant histories. As such, we assume that $g_{a(t)} = g_{a(t)}(a(t)\mid C_o(t))$ and $q_{y(t)} = q_{y(t)}(y(t)\mid C_y(t))$ are stationary in time, and we make no stationarity assumptions on $q_{w(t)}$. Since we impose conditional (strong) stationarity on $g_{a(t)}$ and $q_{y(t)}$, we have that $g_{a(t)}(a(t)\mid C_o(t))=\bar{g}(a(t)\mid C_o(t))$ and $q_{y(t)}(y(t)\mid C_o(t),a(t))=\bar{q}_y(y(t)\mid C_o(t),a(t))$ for common functions $\bar{g}$ and $\bar{q}_y$. Therefore, 
\[
p_{C_o(t)}(a,y,w)=\bar{g}(a\mid C_a(t))\bar{q}_y(y\mid C_y(t))q_{w(t)}(w\mid C_w(t)).
\]
We note that we make no model assumptions on $\bar{g}$, $\bar{q}_y$ and $q_{w(t)}$. We suppress dependence of the conditional density on $q_{w(t)}$ in future reference, as this factor plays no role
in estimation. In particular, neither $\Psi_{C_o(t)}(P_{C_o(t)})$ nor its canonical gradient depend on  $q_{w(t)}$, allowing us to act as if $q_{w(t)}$ is known. We define $\theta=(\bar{g},\bar{q}_y)$ and let $\Theta={\cal G}\times {\cal Q}$ be the cartesian product of the two nonparametric parameter spaces for $\bar{g}$ and $\bar{q}_y$. Let $p_{\theta,C_o(t)}$ and $p_{\theta}^N$ be the density for $O(t)$ given $C_o(t)$ and $O^N$, implied by $\theta=(\bar{g},\bar{q}_y)$.  This formulation defines a statistical model ${\cal M}^N$ for $P^N$. As in Section 2.1, we define a statistical model conditional on realized summary as ${\cal M}(C_o(t))=\{p_{C_o(t)}=\bar{g}_{C_o(t)}\bar{q}_{y,C_o(t)}q_{w(t),C_o(t)}: (\bar{g},\bar{q}_y)\in \Theta\}$ for $p_{C_o(t)}$ for a given $C_o(t)$.

\vspace{2mm}
\noindent
{\bf Target parameter:}
Below we define our first target parameter as the causal effect of $A(t)$ on subsequent outcome $Y(t)$, conditional on $C_o(t)$. In particular, for a given $C_o(t)$, we define a target parameter $\Psi_{C_o(t)} :{\cal M}(C_o(t))\rightarrow\openr$ given by:
\[
\Psi_{C_o(t)}(\bar{q}_y)=E(Y(t)\mid C_o(t),A(t)=1)=\int y \bar{q}_y(y\mid C_o(t),1)d\mu_y(y).
\]
Often the real parameter of interest is the causal difference, or the average treatment effect defined as:
$$E(Y(t)\mid C_o(t),A(t)=1)-E(Y(t)\mid C_o(t),A(t)=0).$$
We emphasize that our estimator can be immediately generalized to this contrast or to the bivariate parameter with these two components. Let $\bar{Q}_y(C_o(t),a)=E(Y(t)\mid A(t)=a,C_o(t))$ be the conditional mean of $Y(t)$, which is determined by $\bar{q}_y$.
The canonical gradient of $\Psi_{C_o(t)}:{\cal M}(C_o(t))\rightarrow\openr$ at $p_{\theta,C_o(t)}$ is given by:
\[
D^*(\theta)(C_o(t),O(t))=\frac{I(A(t)=1)}{\bar{g}(1\mid C_o(t))}(Y(t)-\bar{Q}_y(C_o(t),1)).
\]
Note that, for a given $C_o(t)$, this canonical gradient is a function of $O\in {\cal O}$ which has conditional mean zero w.r.t. $P_{\theta,C_o(t)}$.
Similarly to the discussion before, we reiterate the easy extension to the more interesting difference and its canonical gradient:
\[
\left (\frac{I(A(t)=1)}{\bar{g}(1\mid C_o(t))} - \frac{I(A(t)=0)}{\bar{g}(0\mid C_o(t))} \right) (Y(t) - \bar{Q}_y(C_o(t), a(t))).
\]
Note that here we can define $\Psi_{C_o(t)}(\theta)=\Psi_{C_o(t)}(\bar{q}_y)$. Further, we describe another interesting target parameter, defined as the average of $C_o(t)$-causal effects. In particular, we define $\Psi^N:{\cal M}^N\rightarrow\openr$ of the data distribution $P^N\in {\cal M}^N$, which is a function of $\theta$:
\begin{eqnarray*}
\Psi^N(P^N)&=&\bar{\Psi}(\bar{q}_y)\equiv \frac{1}{N}\sum_{t=1}^N \Psi_{C_o(t)}(\bar{q}_y)\\
&=&\frac{1}{N}\sum_{t=1}^N E(Y(t)\mid C_o(t),A(t)=1).
\end{eqnarray*}
Similarly for the difference we have:
\begin{eqnarray*}
\Psi^N(P^N)=\frac{1}{N}\sum_{t=1}^N [E(Y(t)\mid C_o(t),A(t)=1) - E(Y(t)\mid C_o(t),A(t)=0)].
\end{eqnarray*}
We emphasize that $\bar{\Psi}(\theta)=\bar{\Psi}(\bar{q}_y)$ is a data dependent target parameter since its value depends on the realized $C_o(t)$, $t=1,\ldots,N$.

\subsection{Defining the TMLE of the average of context-specific causal effects}
We follow the outline described in Section 2 for defining the TMLE. In particular, we define the appropriate loss function and parametric family of fluctuations of the initial estimator with fluctuation parameter $\epsilon$. Further, we specify the universal least favorable submodel. 

\vspace{2mm}
\noindent
In order to define the appropriate loss, we let $L(\bar{Q}_y)(C_o(t),O(t))$ be a loss function for $\bar{Q}_y$. In particular, we define $L(\bar{Q}_y)(C_o(t),O(t))$ as:
\[
-\{Y(t)\log \bar{Q}_y(C_o(t),A(t))+(1-Y(t))\log (1-\bar{Q}_y)(C_o(t),A(t))\}
\]
where $L(\bar{Q}_y)(C_o(t),O(t))$ is the log-likelihood loss for $\bar{Q}_y$. We emphasize that $P_{\theta_0,C_o(t)}L(\bar{Q}_{y,0})= \min_{\bar{Q}_y}P_{\theta_0,C_o(t)}L_{C_o(t)}(\bar{Q}_y)$, with $\bar{Q}_{y,0}$ being the truth.

\vspace{2mm}
\noindent
For a $\bar{Q}_{y}$ in our statistical model, we proceed to define a parametric working model $\{\bar{Q}_{y,\epsilon}:\epsilon\}$  through $\bar{Q}_y$  at $\epsilon =0$
with finite-dimensional parameter. We define the universal least favorable submodel with a logistic fluctuation: 
\[
 \mbox{Logit}\bar{Q}_{y,\epsilon}=\mbox{Logit}\bar{Q}_y+\epsilon H(\bar{g})
 \]
where $H(\bar{g})(C_o(t),A(t))=I(A(t)=1)/\bar{g}(A(t)\mid C_o(t))$ is the clever covariate for the $\Psi_{C_o(t)}(\bar{q}_y)$ target parameter, analogue to the i.i.d. TMLE of the treatment specific mean. Note that  for each $\epsilon$, we have that:
 \[
\frac{d}{d\epsilon}L(\bar{Q}_{y,\epsilon}) =D^*(\bar{Q}_{y,\epsilon},\bar{g})
 \]
 \noindent
For notational convenience, let $\theta_0=(\bar{Q}_{0,y},\bar{g}_0)$, which represents the only relevant part of $\theta$ the target parameter in question and its efficient influence curve depend on.
We define $\theta_N=(\bar{Q}_{y,N},\bar{g}_N)$ as the initial estimator of $\theta_0=(\bar{Q}_{0,y},\bar{g}_0)$. In particular, $\bar{Q}_{y,N}$ and $\bar{g}_N$ could be obtained by using the (e.g., online) Super Learner based on $\sum_t L(\bar{Q}_y)(C_o(t),O(t))$ and $\sum_t L_1(\bar{g})(C_o(t),O(t))$ loss, respectively, where $L_1(\bar{g})(C_o(t),A(t))=-\{A(t)\log\bar{g}(1\mid C_o(t))+(1-A(t))\log(1-\bar{g}(1\mid C_o(t)))\}$. 
For instance,  $\bar{Q}_{y,N}$ could be a Super-Learner estimate based on the $\sum_t L(\bar{Q}_y)(C_o(t),O(t))$ loss using the online cross-validation selector. 
Here we could include the HAL-MLE as a candidate estimator in the library of the online-super learner, beyond parametric model based MLEs and other machine learning algorithms. 
Given the initial estimator of $\theta_0=(\bar{Q}_{0,y},\bar{g}_0)$, we compute the maximum likelihood estimator of $\epsilon$ for the least favorable submodel through $\bar{Q}_{y,N}$ given by:
\[
\epsilon_N=\arg\min_{\epsilon}\sum_t L(\bar{Q}_{y,N,\epsilon})(C_o(t),O(t)).
\]
Let $\bar{Q}_{y,N}^*=\bar{Q}_{y,N,\epsilon_N}$ and $\theta_N^*=(\bar{Q}_{y,N}^*,\bar{g}_N)$ be the resulting update.
The score equation of this MLE yields:
\[
\sum_t D^*(\bar{Q}_{y,N}^*,\bar{g}_N)(C_o(t),O(t)) = 0.
\]

\subsection{Analysis of the TMLE}

In this subsection we analyze the TMLE by application of Theorem \ref{thgen}. Recall that we define the average of $C_o(t)$-causal effects as $\Psi^N:{\cal M}^N\rightarrow\openr$ of the data distribution $P^N\in {\cal M}^N$. In particular, this target parameter is a function of $\theta$ as: $$\bar{\Psi}(\bar{q}_y)\equiv \frac{1}{N}\sum_{t=1}^N \Psi_{C_o(t)}(\bar{q}_y) = \frac{1}{N}\sum_{t=1}^N E(Y(t)\mid C_o(t),A(t)=1).$$ 
First, we define necessary conditions for our Theorem \ref{thsingint}. We refer to Theorem \ref{thhal} showing that the HAL-MLE will indeed have the desired convergence w.r.t. the loss-based dissimilarity, as needed for the second order remainder and consistency conditions of this theorem.

\vspace{2mm}
\noindent
\begin{enumerate}
\item Define ${\cal F}$ to be a class of multivariate, real valued cadlag functions on an Euclidean cube $[0,\tau]$ containing ${\cal C}\times {\cal O}$ with sectional variation norm bounded by a universal constant $M<\infty$. We assume $\{D^*(\bar{Q}_y,\bar{g}):(\bar{Q}_y,\bar{g})\}\subset {\cal F}$, so that, in particular, $D^*(\theta_N^*)\in {\cal F}$ with probability 1. 
\item Assume $\frac{1}{N} \sum_t P_{\theta_0,C_o(t)}\{D^*(\bar{Q}_{y,N}^*,\bar{g}_N)-D^*(\bar{Q}_y^*,\bar{g}_0)\}^2\rightarrow_p 0$.
\item Assume negligible or asymptotic linearity of the remainder, such that:

\begin{eqnarray*}
\frac{1}{N}\sum_t \frac{\bar{g}_N-\bar{g}_0}{\bar{g}_N}(\bar{Q}_{y,N}^*-\bar{Q}_{y}^*)&=&o_P(N^{-1/2})\\
\frac{1}{N}\sum_t \frac{(\bar{g}_N-\bar{g}_0)^2}{\bar{g}_N\bar{g}_0}(\bar{Q}_y^*-\bar{Q}_{0,y})&=&o_P(N^{-1/2}).
\end{eqnarray*}

Additionally, assume that for some function $f$ we have that: 
$$\frac{1}{N} \sum_t \frac{\bar{g}_N-\bar{g}_0}{\bar{g}_0}(\bar{Q}_y^*-\bar{Q}_{0,y}) = \frac{1}{N} \sum_t f(C_o(t))(A(t)-\bar{g}_0(1\mid C_o(t)) + o_P(N^{-1/2}).$$

\item Let $\bar{f}=D^*(\bar{Q}_y^*,\bar{g}_0)+f(C_o(t))(A(t)-\bar{g}_0(1\mid C_o(t))$. Assume $\frac{1}{N}\sum_t P_{0,C_o(t)}\bar{f}^2\rightarrow\sigma^2_0$ a.s.
\end{enumerate}

\noindent
Note that if we assume that $\bar{Q}_{y,N}$ is consistent for $\bar{Q}_{0,y}$, then the three sub-conditions in C3 can be replaced by a single condition:
\[
\frac{1}{N}\sum_t \frac{\bar{g}_N-\bar{g}_0}{\bar{g}_N}(\bar{Q}_{y,N}^*-\bar{Q}_{y,0})=o_P(N^{-1/2}).
\]
We also note that if $\bar{g}_N$ is an MLE according to a parametric model, then the martingale approximation in C3 would be true under weak regularity conditions. 
\noindent
\begin{theorem}[Average over time Context-Specific Effect of a Single Intervention]\label{thsingint} \ \\ 
Let $\theta_N^*$ be the one-step TMLE satisfying $\sum_t D^*(\theta_N^*)(C_o(t),O(t))=0$, where $\theta_N^*=(\bar{Q}_{y,N}^*, \bar{g}_N)$. If C1, C2, C3 and C4 hold, then:
\vspace{3mm}
\[
\sqrt{N}(\bar{\Psi}(\theta_N^*)-\bar{\Psi}(\theta_0))\Rightarrow N(0,\sigma^2_0).
\] 

\vspace{3mm}
\noindent
where  $\sigma^2_0$ is the limit of $\frac{1}{N} \sum_t \bar{f}^2(C_o(t),O(t))$ and $\bar{f}=D^*(\theta^*)+f_1(\theta^*,\theta_0)$.
\end{theorem}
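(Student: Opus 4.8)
The plan is to derive this theorem as a direct application of the general Theorem \ref{thgen}, by verifying that the four specialized conditions C1--C4 stated above imply the four generic conditions of that theorem for the treatment-specific-mean parameter. The one genuinely parameter-specific ingredient is the exact second-order remainder, which I would compute first. Using $\Psi_{C_o(t)}(\theta)=\bar{Q}_y(C_o(t),1)$ together with the canonical gradient $D^*(\theta)=\frac{I(A(t)=1)}{\bar{g}(1\mid C_o(t))}(Y(t)-\bar{Q}_y(C_o(t),1))$, I would evaluate $P_{\theta_0,C_o(t)}D^*(\theta)$ by conditioning on $(C_o(t),A(t))$ under the truth, obtaining $\frac{\bar{g}_0(1\mid C_o(t))}{\bar{g}(1\mid C_o(t))}(\bar{Q}_{0,y}-\bar{Q}_y)(C_o(t),1)$. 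Substituting into the definition of $R_{2,C_o(t)}$ collapses the expression to the double-robust product form
\[
R_{2,C_o(t)}(\theta,\theta_0)=\frac{\bar{g}(1\mid C_o(t))-\bar{g}_0(1\mid C_o(t))}{\bar{g}(1\mid C_o(t))}\,(\bar{Q}_y-\bar{Q}_{0,y})(C_o(t),1).
\]
This single identity drives the whole analysis, since at the limit $\theta^*=(\bar{Q}_y^*,\bar{g}_0)$ the leading $\bar{g}$-factor is exactly zero, so $R_{2,C_o(t)}(\theta^*,\theta_0)=0$ holds regardless of whether $\bar{Q}_y^*$ equals the truth; this is the precise manifestation of double robustness that the general framework requires of the limit.

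With the remainder in hand, I would check each generic condition in turn. For the generic C1 I would invoke the subsection on cadlag functions with uniformly bounded sectional variation norm: the specialized C1 places $\{D^*(\bar{Q}_y,\bar{g})\}$ inside such a class, which is a convex hull of indicator functions and hence a Donsker class with finite entropy integral (\ref{entropyintegral}), so the generic entropy requirement and the attendant asymptotic equicontinuity of $M_N$ via Lemma \ref{lemmaasympequi} hold. The generic C2 is literally the specialized C2, an $L^2$-consistency statement for $D^*(\theta_N^*)$ toward $D^*(\theta^*)$, which Theorem \ref{thhal} supplies when $\theta_N$ is taken to be the HAL-MLE. The generic C4 is the specialized C4 verbatim.

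The heart of the argument, and the step I expect to be most delicate, is deriving the generic C3 (the martingale representation of the averaged remainder) from the three sub-conditions of the specialized C3. Here I would plug $\theta_N^*=(\bar{Q}_{y,N}^*,\bar{g}_N)$ into the remainder identity and split $\bar{Q}_{y,N}^*-\bar{Q}_{0,y}=(\bar{Q}_{y,N}^*-\bar{Q}_y^*)+(\bar{Q}_y^*-\bar{Q}_{0,y})$. The first piece yields $\frac{1}{N}\sum_t\frac{\bar{g}_N-\bar{g}_0}{\bar{g}_N}(\bar{Q}_{y,N}^*-\bar{Q}_y^*)$, which is $o_P(N^{-1/2})$ by the first sub-condition. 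For the second piece I would write $\frac{1}{\bar{g}_N}=\frac{1}{\bar{g}_0}-\frac{\bar{g}_N-\bar{g}_0}{\bar{g}_N\bar{g}_0}$, peeling off the quadratic term $\frac{(\bar{g}_N-\bar{g}_0)^2}{\bar{g}_N\bar{g}_0}(\bar{Q}_y^*-\bar{Q}_{0,y})$, which the second sub-condition controls as $o_P(N^{-1/2})$, leaving $\frac{1}{N}\sum_t\frac{\bar{g}_N-\bar{g}_0}{\bar{g}_0}(\bar{Q}_y^*-\bar{Q}_{0,y})$. The third sub-condition then replaces this last term by the martingale $\frac{1}{N}\sum_t f(C_o(t))(A(t)-\bar{g}_0(1\mid C_o(t)))$ up to $o_P(N^{-1/2})$, identifying $f_1(C_o(t),O(t))=f(C_o(t))(A(t)-\bar{g}_0(1\mid C_o(t)))$ as the extra influence contribution of the generic C3. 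Once C1--C4 are verified, Theorem \ref{thgen} applies directly and delivers the stated central limit theorem with $\bar{f}=D^*(\theta^*)+f_1$ and limiting variance $\sigma_0^2$. The subtlety worth flagging is the bookkeeping of the centering terms: the linear term $(P_{\theta,C_o(t)}-P_{\theta_0,C_o(t)})D^*(\theta)$ in (\ref{g1}) equals $-P_{\theta_0,C_o(t)}D^*(\theta)$ by the conditional-mean-zero property $P_{\theta,C_o(t)}D^*(\theta)=0$, and this must be matched against the solved efficient-score equation $\sum_t D^*(\theta_N^*)=0$ so that only the genuinely random (martingale) pieces survive in the final expansion.
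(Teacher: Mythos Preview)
Your proposal is correct and follows essentially the same route as the paper: you compute the same double-robust remainder $R_{2,C_o(t)}(\theta,\theta_0)=\frac{\bar{g}-\bar{g}_0}{\bar{g}}(\bar{Q}_y-\bar{Q}_{0,y})$, split it via $\bar{Q}_{y,N}^*-\bar{Q}_{0,y}=(\bar{Q}_{y,N}^*-\bar{Q}_y^*)+(\bar{Q}_y^*-\bar{Q}_{0,y})$ and then $1/\bar{g}_N=1/\bar{g}_0-(\bar{g}_N-\bar{g}_0)/(\bar{g}_N\bar{g}_0)$ to match the three sub-conditions of C3, and invoke the equicontinuity Lemma~\ref{lemmaasympequi} under C1--C2 before applying the martingale CLT under C4. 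The only cosmetic difference is that you package the conclusion as a direct appeal to Theorem~\ref{thgen}, whereas the paper replays that argument inline for this special case.
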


\vspace{3mm}
\begin{proof}
First, we have
\begin{equation}\label{fe1}
\begin{aligned}
\Psi_{C_o(t)}(\theta^*_N)-\Psi_{C_o(t)}(\theta_0) &=  - P_{\theta_0,C_o(t)}D^*_{C_o(t)}(\theta_N^*) \\
&\phantom{{}=0} + R_{2,C_o(t)}(\theta_N^*,\theta_0),
\end{aligned}
\end{equation}
where the second order remainder for any $\theta$ is given by:
\[
R_{2,C_o(t)}(\theta,\theta_0)=\frac{\bar{g}-\bar{g}_0}{\bar{g}}(1\mid C_o(t))(\bar{Q}_y-\bar{Q}_{y,0})(C_o(t),1).
\]
By combining the efficient score equation with the above second order expansion (\ref{fe1}) of $\Psi_{C_o(t)}$, we obtain the following expression for the Taylor expansion:

\[
\begin{array}{l}
\frac{1}{N} \sum_t [\Psi_{C_o(t)} (\theta^*_N) - \Psi_{C_o(t)}(\theta_0)] = \frac{1}{N} \sum_t \left\{ D^*(\bar{Q}_{y,N}^*, \bar{g}_N) (C_o(t),O(t)) - P_{\theta_0,C_o(t)} D^*(\bar{Q}_{y,N}^*, \bar{g}_N)\right\} \\
\hspace*{7cm} + \frac{1}{N}\sum_t R_{2,C_o(t)} (\bar{Q}_{y,N}^*, \bar{g}_N, \bar{Q}_{y,0}, \bar{g}_0).
\end{array}
\]

\vspace{2mm}
\noindent
We consider a martingale process $(M_N(f) : f \in {\cal F})$ defined by: 
\[
M_n(f)= \frac{1}{N} \sum_{t=1}^N \{f(C_o(t),O(t)) - P_{\theta_0,C_o(t)} f \}.
\]

\vspace{2mm}
\noindent
Define $\theta^*=(\bar{Q}^*_y, \bar{g}_0)$ as the limit of $\theta^*_N=(\bar{Q}^*_{y,N}, \bar{g}_N)$. By Lemma \ref{lemmaasympequi} and the fact that the class of functions ${\cal F}$ satisfies the entropy integral condition, we have the desired asymptotic equicontinuity of the martingale process so that by condition C2: 
\[
\begin{array}{l}
\frac{1}{N} \sum_t \left\{ D^*(\theta_N^*)(C_o(t),O(t))-P_{\theta_0,C_o(t)}D^*(\theta_N^*)\right\}\\
-\frac{1}{N}\sum_t\left\{ D^*(\theta^*)(C_o(t),O(t))-P_{\theta_0,C_o(t)}D^*(\theta^*)\right\}
=o_P(N^{-1/2}).
\end{array}
\]


\noindent 
We note that $R_{2,C_o(t)}(\bar{Q}_y^*,\bar{g}_0,\bar{Q}_{y,0},\bar{g}_0)=0$ for all $C_o(t)$. 
We now consider the remainder term, $\frac{1}{N}\sum_t R_{2,C_o(t)}(\theta_N^*,\theta_0)$, which can be represented as:
\begin{align*}
\frac{1}{N}\sum_t R_{2,C_o(t)}(\theta_N^*,\theta_0) &= \frac{1}{N}\sum_t \frac{\bar{g}_N-\bar{g}_0}{\bar{g}_N}(1\mid C_o(t))(\bar{Q}_{y,N}^*-\bar{Q}_{0,y})(C_o(t),1) \\
&= \frac{1}{N}\sum_t \frac{\bar{g}_N-\bar{g}_0}{\bar{g}_N}(\bar{Q}_{y,N}^*-\bar{Q}_{y}^*)+ \frac{1}{N}\sum_t \frac{\bar{g}_N-\bar{g}_0}{\bar{g}_N}(\bar{Q}_y^*-\bar{Q}_{0,y}).
\end{align*}

\noindent
By C3, we assume that the first term is a second order term so that $\bar{g}_N$ and $\bar{Q}_{y,N}$ converge fast enough to their limits ($o_P(N^{-1/2})$). We decompose the second term further, obtaining the following expression for $\frac{1}{N}\sum_t \frac{\bar{g}_N-\bar{g}_0}{\bar{g}_N}(\bar{Q}_y^*-\bar{Q}_{0,y})$:
\[
\frac{1}{N}\sum_t \frac{\bar{g}_N-\bar{g}_0}{\bar{g}_0}(\bar{Q}_y^*-\bar{Q}_{0,y})+
\frac{1}{N}\sum_t \frac{(\bar{g}_N-\bar{g}_0)^2}{\bar{g}_N\bar{g}_0}(\bar{Q}_y^*-\bar{Q}_{0,y}).
\]
By condition C3, the second term in the above expression is $o_P(N^{-1/2})$.
Condition C3 also assumes that the first term, a smooth function of   $\bar{g}_N-\bar{g}_0$, can be represented as $\frac{1}{N} \sum_t f(C_o(t))(A(t)-\bar{g}_0(1\mid C_o(t))$ for some $f$, plus $o_P(N^{-1/2})$, so that it is a martingale. It follows that:
\begin{eqnarray*}
\bar{\Psi}(\bar{Q}_{y,N}^*)-\bar{\Psi}(\bar{Q}_{0,y}) &=&
\frac{1}{N}\sum_t\left\{ D^*(\bar{Q}_y^*,\bar{g}_0)(C_o(t),O(t))-P_{\theta_0,C_o(t)}D^*(\bar{Q}_y^*,\bar{g}_0) \right\}\\
&& + \frac{1}{N}\sum_t f(C_o(t))(A(t)-\bar{g}_0(1\mid C_o(t)) +o_P(N^{-1/2}).
\end{eqnarray*}

\noindent
The right-hand side is a discrete martingale up until an $o_P(N^{-1/2})$ term, whose standardized version converges to a normal limit distribution. As a consequence, by condition C4, $N^{1/2}(\bar{\Psi}(\theta_N^*)-\bar{\Psi}(\theta_0))\Rightarrow N(0,\sigma^2_0)$ which proves our result.

\end{proof}


\section{Context-specific causal effects of multiple time-point interventions}\label{sect4}

As opposed to the setting described in the previous section, one might be interested in interventions over multiple consecutive time points, analogous to well-studied longitudinal settings \cite{gruber2011}. In this section, we consider a more general longitudinal data structure $O(t)$ involving multiple intervention nodes $A(t,k)$, $k=0,\ldots,K$, alternated with time-dependent covariate nodes $L(t,k)$, $k=1,\ldots,K$, and a final outcome $Y(t)=L(t,K+1)$. Conditional on the context $C_o(t)$ at time $t$, we define $\Psi_{C_o(t)}(P^N)$ as the counterfactual mean outcome  of $Y(t)$ under a multiple time-point intervention on these $t$-specific intervention nodes, generalizing the results obtained in the previous section. As before, we define the statistical model ${\cal M}_{C_o(t)}$ for the conditional distribution of $O(t)$, given $C_o(t)$, analogue to the i.i.d. literature.  As before, out target parameter of interest is the average over time of context-specific counterfactual mean outcomes. We present the targeted maximum likelihood estimator involving estimation of the conditional density of $O(t)$ given $C_o(t)$, and the sequential regression based TMLE, analogue to the i.i.d. literature. Finally, we apply our general Theorem \ref{thgen} to these two TMLEs, resulting in two new theorems. 
\subsection{Statistical formulation}
{\bf Data:}
We define the observed data as: $$O(t)=(A(t,0),L(t,1),A(t,1),\ldots,L(t,K),A(t,K),L(t,K+1)), \ \ \ t=1,\ldots,N,$$ where $O(t)$ is a fixed dimensional element of an Euclidean set ${\cal O}$. In particular, $O(t)$ is an ordered longitudinal data structure within time unit $t$, with an intervention node $A(t,j)$ representing treatment or censoring. For notational convenience, we define $A(t)$ and $L(t)$ as $A(t) = (A(t,0),  \cdots, A(t,K))$ and $L(t) = (L(t,1), \cdots, L(t,K))$. Note that $L(t,j+1)$ is a vector of subsequent time-dependent covariates at time $j$ within $t$, $t=1,\ldots,N$. Let $Y(t) $ be a component or real valued function of $L(t,K+1)$, where $Y(t)$ is  the outcome of interest. We note that with this formulation, the complete time-series we observe, $O^N \sim P_0^N$, is just an alternation of time-dependent treatment and time-dependent covariate/outcomes. We emphasize that the blocks $O(t)$ could have a relevant interpretation in line with the applied problem at hand. As such, the measurements in $O(t)$ might correspond with a sequence of unique actions and measurements on day/cycle/period $t$, so that only $A(t,j)$ and $L(t,j)$ across $t$ for a fixed $j$ are measuring the same $j$-specific variable at time $t$. Similarly, it could be the case that we truly observe a unique experiment over a time block $t$ - for example, at each time $t$ a new subject/unit enrolls and is observed over time points $j$. When no interpretable block is possible, we emphasize that creating artificial blocks $O(t)$ solely for the purpose of estimation of a particular causal effect is also in line with our developed theory. The probability density of $P^N$ of $O^N$ can be factorized according to the time-ordering as follows:
\begin{align*}
p^N(o) &= \prod_{t=1}^{N}p_{a(t)}(a(t) | \bar{o}(t-1))  \prod_{t=1}^{N} p_{l(t)}( l(t) | \bar{o}(t-1),a(t)) \\ 
&= \prod_{t=1}^{N}  \prod_{j=0}^{K}  p_{a(t,j)}(a(t,j) | \bar{o}(t-1,j))  \prod_{t=1}^{N} \prod_{j=1}^{K+1} p_{l(t,j)}( l(t,j) | \bar{o}(t-1,j),a(t,j)).
\end{align*}
Here, $p_{a(t,j)}$ and $p_{l(t,j)}$ denote the conditional probability densities of $A(t,j)$ and $L(t,j)$ given the relevant past. We define $\mu_a$ and $\mu_l$ as the corresponding dominating measures. 

\vspace{2mm}
\noindent
{\bf Statistical model:}
As in the previous section, we assume the conditional density of $O(t)$ given $\bar{O}(t-1)$, $P_{O(t)\mid \bar{O}(t-1)}$, depends on the past only through a summary measure $C_o(t)=C_o(\bar{O}(t-1))\in {\cal C}$ of fixed dimension. We denote $P_{O(t) \mid C_o(t) }$ with $P_{C_o(t)}$ to simplify notation. Intuitively, at time $t$, $C_o(t)$ plays the role of baseline-covariates for the $t^{\text{th}}$ experiment $P_{O(t)\mid \bar{O}(t-1)}$ of the ordered sequence of experiments $t=1,\ldots,N$. Further, let $g_{t,j} = p_{a(t,j)}$ and $q_{t,j} = p_{l(t,j)}$. With the new notation set, the density $p_{C_o(t)}=p_{O(t)\mid C_o(t)}$ can be factorized as follows:
\[
p_{C_o(t)}(O(t))=\prod_{j=1}^{K+1}q_{t,j}(L(t,j)\mid C_l(t,j))\prod_{j=0}^K g_{t,j}(A(t,j)\mid C_a(t,j)),
\]
where $C_l(t,j)=(L(t,1:j-1),A(t,1:j-1),C_o(t))$ and $C_a(t,j) = (L(t,1:j),A(t,1:j-1),C_o(t))$ denote the relevant histories for the conditional densities of $A(t,j)$ and $L(t,j)$ depend on. 

\vspace{2mm}
\noindent
We assume that the time-series is described by a common-in-time $\bar{q}_j$ for $j=1,\ldots,K+1$. Further, we partition the indices $\{1,\ldots,K\}$ for the intervention nodes into two disjoint and complementary sets ${\cal A}_1$ and ${\cal A}_2$. We assume that for $j\in {\cal A}_1$, the intervention mechanism $g_{t,j}$ for generating $A(t,j)$ is known for each $t$. Since the treatment mechanism $g_{t,j}$ for $j\in {\cal A}_1$ is known, we do not need conditional stationarity assumptions for $j\in {\cal A}_1$. We make this distinction in order to be able to study settings when treatment is sequentially randomized with probabilities that change across time $t$. For $j\in {\cal A}_2$, we assume that $g_{t,j}$ is described by a common-in-time $\bar{g}_j$. The density $p_{c_o(t)}$ is now modeled as follows:
\[
p_{c_o(t),\bar{q},\bar{g}}(o(t))\equiv \prod_{j=1}^{K+1}\bar{q}_j(l(t,j)\mid c_l(t,j))\prod_{j\in {\cal A}_1}g_{t,j}(a(t,j)\mid c_a(t,j))\prod_{j\in {\cal A}_2}\bar{g}_j(a(t,j)\mid c_a(t,j)).
\]
Let $\bar{q}=(\bar{q}_1,\ldots,\bar{q}_{K+1})$ and $\bar{g}=(\bar{g}_j:j\in {\cal A}_2)$ be the unknown parameters in this representation of the density of $O(t)$ given $\bar{O}(t-1)$. Define ${\cal Q}_j$ as the  nonparametric set of conditional densities so that  $\bar{q}_j \in {\cal Q}_j$. We emphasize that we impose no restrictions on $\bar{q}_j$ in ${\cal Q}_j$. Similarly, let ${\cal G}_j$ be a possibly restricted set of conditional densities of $A(j)$, $j\in {\cal A}_2$. We define $\theta_j=(\bar{g}_j,\bar{q}_j)$ and let $\Theta_j={\cal G}_j \times {\cal Q}_j$ be the cartesian product of the parameter spaces for $\bar{g}_j$ and $\bar{q}_j$. With that, we have defined a statistical model ${\cal M}^N$ for $P^N$. Additionally, we defined a statistical model for $p_{C_o(t)}$ conditional on the realized fixed-dimensional summary $C_o(t)$ as the following ${\cal M}(C_o(t))$:
\[
{\cal M}(C_o(t))=\left\{ p_{C_o(t),\bar{q},\bar{g}}: \mbox{ $\forall$ }j,  \bar{q}_j\in {\cal Q}_j,\mbox{ $\forall$ } j\in {\cal A}_2, \bar{g}_j \in {\cal G}_j \right\}.
\]

\vspace{2mm}
\noindent
{\bf Target parameter:}
Let $\bar{g}^*_t=(\bar{g}_{t,j}^*:j=1,\ldots,K)$ denote conditional densities of $A(t,j)$  given a summary measure $C_a^*(t,j)$ of $C_o(t)$ and $C_a(t,j)$. We define the $G$-computation formula for the post-intervention distribution of $O(t)$ given $C_o(t)$ for $j=1,\ldots,K$ with $A(t,j)$ subjected to the intervention $\bar{g}^*=(\bar{g}_{t,j}^*:(t,j))$ across $t$ and $j$:
\[
p_{c_o(t),\bar{q},\bar{g}^*}(o(t))=\prod_{j=1}^{K+1}\bar{q}_j(l(t,j)\mid c_l(t,j))\prod_{j=1}^K \bar{g}_{t,j}^*(a(t,j)\mid c_a^*(t,j)).
\]
Typically, $\bar{g}^*_{t,j}=\bar{g}^*_j$ is constant over time $t$.
However, one is able to define $C_o(t)$-specific stochastic interventions at each  $C_o(t)$, $t=1,\ldots,N$, allowing for changing interventions across time blocks.  

\vspace{2mm}
\noindent
Recall that $Y(t)$ is the outcome of interest for the $C_o(t)$-specific experiment, and it is defined as a real valued function of $L(t,K+1)$. We emphasize that our analysis is flexible enough to support many different outcomes within the user-specified time block $t$. Let $Y_{\bar{g}^*}(t)$ be the random variable of $Y(t)$ with conditional probability density, given $C_o(t)$, implied by $p_{C_o(t),\bar{q},\bar{g}^*}$. In particular, we might be interested in the $C_o(t)$-specific counterfactual mean under stochastic intervention $\bar{g}^*$:
\[
\Psi_{C_o(t)}(\bar{q})=\Psi_{C_o(t)}(P_{C_o(t),\bar{q},\bar{g}})=E_{P_{C_o(t),\bar{q},\bar{g}^*}} Y_{\bar{g}^*}(t).
\]
Under a $C_o(t)$-specific structural equation model and sequential randomization of $A(t,j)$, $j=1,\ldots,K$, we note that $\Psi_{C_o(t)}(P_{C_o(t)})$ denotes the counterfactual mean outcome of $Y(t)$ under the stochastic intervention $\bar{g}^*$ given $C_o(t)$.

\vspace{2mm}
\noindent
Let $D^*_{C_o(t)}(\bar{q},\bar{g})$ be the canonical gradient of $\Psi_{C_o(t)}$ at $P_{C_o(t),\bar{q},\bar{g}}$ for the statistical model conditional on the realized fixed dimensional summary,  ${\cal M}(C_o(t))$. This canonical gradient is well known from the i.i.d. literature (e.g., \citep{bang&robins05}) and given by:
\[
D^*_{C_o(t)}(\bar{q},\bar{g})(O(t)) = \sum_{j=1}^{K+1} H_j(\bar{g}) (C_o(t),O(t)) (\bar{Q}_{A(t,j)}-\bar{Q}_{L(t,j)}) (C_o(t),O(t)),
\]
where  we will now elaborate on the relevant parts of this efficient influence curve, $D^*_{C_o(t)}(\bar{q},\bar{g})(O(t))$. First, we define the clever covariate as the product ratio of the treatment mechanism under the stochastic intervention and true intervention mechanism:
$$H_{j}(\bar{g})(C_o(t),O(t))=\frac{\prod_{l=1}^j\bar{g}^*_{t,l}(A(t,l)\mid C_a^*(t,l))}{\prod_{l=1}^jg_{t,l}(A(t,l)\mid C_a^*(t,l))}.$$
The conditional means $\bar{Q}_{A(t,j)}$ and $\bar{Q}_{L(t,j)}$ are defined recursively as follows. In particular, we define the conditional expectations for the first two iterations as:
\begin{eqnarray*}
\bar{Q}_{A(t,K+1)}&=& Y(t);\\
\bar{Q}_{L(t,K+1)}&=&E_{\bar{q}_{K+1}}(Y(t)\mid A(t,1:K),L(t,1:K),C_o(t));\\
\bar{Q}_{A(t,K)}&=&E_{\bar{g}_K^*}(\bar{Q}_{L(t,K+1)}\mid A(t,1:K-1),L(t,1:K),C_o(t));\\
\bar{Q}_{L(t,K)}&=&E_{\bar{q}_K}(\bar{Q}_{A(t,K)}\mid A(t,1:K-1),L(t,1:K-1),C_o(t)).
\end{eqnarray*}
Subsequently, we have that for $k=K-1,\ldots,0$:
\begin{eqnarray*}
\bar{Q}_{A(t,k)}&=&E_{\bar{g}_k^*}(\bar{Q}_{L(t,k+1)}\mid A(t,1:k-1),L(t,1:k),C_o(t)),
\end{eqnarray*}
and for $k=K-1,\ldots,1$:
\begin{eqnarray*}
\bar{Q}_{L(t,k)}&=& E_{\bar{q}_k}(\bar{Q}_{A(t,k)}\mid A(t:1:k-1),L(t,1:k-1),C_o(t)).
\end{eqnarray*}
For a specific $j$, we define $D^*_{C_o(t),j}(\bar{q},\bar{g})(O(t))$ for each $j=1,\ldots,K+1$:
\[
D^*_{C_o(t),j}(\bar{q},\bar{g})(O(t))=H_{j}(\bar{g})(C_o(t),O(t))(\bar{Q}_{A(t,j)}-\bar{Q}_{L(t,j)}),
\]
 so that $D^*_{C_o(t)}(\bar{q},\bar{g})=\sum_{j=1}^{K+1}D^*_{C_o(t),j}(\bar{q},\bar{g})$. The $C_o(t)$-specific counterfactual mean equals the last conditional expectation:
\begin{equation}\label{seqregrepr}
\Psi_{C_o(t)}(\bar{q})=\bar{Q}_{A(t,0)}(C_o(t)),
\end{equation}
which is often referred to as the sequential regression representation of the counterfactual mean outcome.

\vspace{2mm}
\noindent
Let
\[
\Psi(P^N)=\bar{\Psi}(\bar{q})=\frac{1}{N}\sum_{t=1}^N \Psi_{C_o(t)}(P_{C_o(t),\bar{q},\bar{g}^*})
\]
be the average over time of the context-specific counterfactual mean outcomes.
We also define the second order remainder: 
\[
R_{20,C_o(t)}(\bar{q},\bar{g},\bar{q}_0,\bar{g}_0)\equiv \Psi_{C_o(t)}(\bar{q})-\Psi_{C_o(t)}(\bar{q}_0)+P_{\theta_0,C_o(t)}D^*_{C_o(t)}(\bar{q},\bar{g}).\]
This term can be explicitly presented as a sum of terms involving integrals over cross-products of differences $(\bar{q}-\bar{q}_0)$ and $(\bar{g}-\bar{g}_0)$, and thereby has the double robustness structure. 

\subsection{TMLE involving estimation of the conditional density of $O(t)$ given $C_o(t)$}

In this subsection, we define the TMLE as described in Section 2, by defining the appropriate loss function and parametric fluctuation of the initial estimator.  First, we construct an initial estimator $(\bar{q}_N,\bar{g}_N)$ of $(\bar{q}_0,\bar{g}_0)$. While there are many ways to construct an initial estimator, we advocate for online Super-learner based estimators constructed based on the log-likelihood loss and appropriate cross-validation for time-series (recursive, hybrid, or rolling cross-validation scheme). Further, we need to define an appropriate loss function, namely $L(\bar{q})(C_o(t),O(t))$, as a loss function for $\bar{q}$ such that $P_{\theta_0,C_o(t)}L(\bar{q}_{0})(C_o(t),O(t)) = \min_{\bar{q}}P_{\theta_0,C_o(t)}L_{C_o(t)}(\bar{q})$. A natural loss function for $\bar{q}$ is the log-likelihood loss $-\sum_t \log \bar{q}(y(t)\mid c_y(t))$.  Similarly, we can use the log-likelihood loss $-\sum_t \log \bar{g}(a(t)\mid c_a(t))$ for $\bar{g}$. For a $\bar{q}_N$ in our statistical model, we proceed to define a parametric working model $\{\bar{q}_{N,\epsilon}:\epsilon\}$ with finite-dimensional parameter $\epsilon$ so that $\epsilon = 0$ indexed  $\bar{q}_N$. We define the universal least favorable submodel such that:
\[
\frac{d}{d\epsilon}\log p_{C_o(t),\bar{q}_{N,\epsilon},\bar{g}_N}=D^*_{C_o(t)}(\bar{q}_{N,\epsilon},\bar{g}_N).
\]
Similarly, for a local least favorable model, we would have:
\[
\left . \frac{d}{d\epsilon}\log p_{C_o(t),\bar{q}_{N,\epsilon},\bar{g}_N} \right |_{\epsilon =0} = D^*_{C_o(t)}(\bar{q}_{N},\bar{g}_N).
\]
A possible local least favorable submodel is the following parametric fluctuation model:
\[
\bar{q}_{N,j,\epsilon}=\bar{q}_{N,j}(1+\epsilon D^*_{C_o(t),j}(\bar{q}_N,\bar{g}_N)),
\]
with a common $\epsilon$, $j=1,\ldots,K$. A   local least favorable submodel implies a universal least favorable model  by tracking this local model iteratively for small local moves. We note that $\bar{q}_{N,j,\epsilon}$ is still a proper density within our statistical model for $D^*_{C_o(t),j}(\bar{q}_N,\bar{g}_N)$ uniformly bounded. Given the initial estimator $(\bar{q}_N,\bar{g}_N)$ of $(\bar{q}_0,\bar{g}_0)$, we compute the maximum likelihood estimator of $\epsilon$ given by:
\[
\epsilon_N=\arg\max_{\epsilon}\frac{1}{N}\sum_{t=1}^N \log p_{C_o(t),\bar{q}_{N,\epsilon},\bar{g}_N}(O(t)\mid C_o(t)).
\]
For the universal least favorable submodel, the score equation of the MLE yields:
\[
\frac{1}{N}\sum_{t=1}^N D^*_{C_o(t)}(\bar{q}_{N,\epsilon_N},\bar{g}_N)(O(t)) = 0.
\]
More generally, let $\bar{q}_N^*$ be an update of $\bar{q}_N$ so that the above equation holds up until $o_P(N^{-1/2})$, with $\bar{q}_N^*$ representing the final update for either the one-step or iterative final updated estimate based on a local least favorable submodel. We note that we can also include other TMLEs based on this local least favorable submodel- for example, we could include the closed form TMLE as described in \cite{vdl2010} and \cite{stitelman2011}. We note that the latter TMLE involves framing data in terms of binaries, using clever covariates for each binary conditional distribution with separate $\epsilon$. One then recursively carries out separate TMLE steps starting at the last factor of the ordered likelihood, and proceeding downwards until the first factor is targeted, always using the most recent targeted updates in the clever covariate. This i.i.d. TMLE can be applied to our data set of $t$-specific data structures $(C_o(t),O(t))$, where $C_o(t)$ represents the baseline covariate. The corresponding TMLE of $\bar{\Psi}(\bar{q}_0)$ is  given by $\bar{\Psi}(\bar{q}_N^*)$.

\vspace{2mm}
\noindent
We now apply Theorem \ref{thgen} to the average over time of context-specific causal effects with multiple time-point interventions, with the TMLE constructed through the estimation of conditional density of $O(t)$ given $C_o(t)$. In particular, recall that we define $C_o(t)$-specific counterfactual mean under stochastic intervention $\bar{g}^*$ as:
$$\Psi_{C_o(t)}(\bar{q})=\Psi_{C_o(t)}(P_{C_o(t),\bar{q},\bar{g}})=E_{P_{C_o(t),\bar{q},\bar{g}^*}} Y_{\bar{g}^*}(t)$$ where $\Psi^N:{\cal M}^N\rightarrow\openr$ of the data distribution $P^N\in {\cal M}^N$. Similarly, the average over time $C_o(t)$-specific target parameter was defined as: $$\Psi(P^N)=\bar{\Psi}(\bar{q})=\frac{1}{N}\sum_{t=1}^N \Psi_{C_o(t)}(P_{C_o(t),\bar{q},\bar{g}^*}).$$ 

\noindent
We first define the necessary conditions:

\vspace{2mm}
\noindent
\begin{enumerate}
\item Define ${\cal F}$ to be a class of multivariate, real valued cadlag functions on an Euclidean cube $[0,\tau]$ containing ${\cal C}\times {\cal O}$ with sectional variation norm $\pl f\pl_v^*$, $f \in \cal F$, bounded by an universal constant $M<\infty$. We assume ${\cal F}\equiv \{(C_o,o)\rightarrow D^*_{C_o}(\bar{q},\bar{g})(o):\bar{q}\in {\cal Q}, \bar{g}\in {\cal G}\}$ and $D^*_{C_o}(\bar{q}_N^*,\bar{g}_N)(o) \in {\cal F}$ with probability tending to 1. 

\item Assume $\frac{1}{N} \sum_{t=1}^N P_{0,C_o(t)} \{ D^*_{C_0}(\bar{q}_N^*,\bar{g}_N) - D^*_{C_0}(\bar{q}^*,\bar{g}_0)\}^2 \rightarrow_p 0$ as $N\rightarrow\infty$ for some possibly misspecified limit $\bar{q}^*$.

\item Assume negligible or asymptotic linearity of the remainder, such that:
\[
\frac{1}{N} \sum_{t=1}^N R_{2,C_o(t)}(\bar{q}_N^*,\bar{g}_N,\bar{q}^*,\bar{g}_0)=o_P(N^{-1/2}),
\] 
and
\[
\frac{1}{N} \sum_{t=1}^N R_{2,C_o(t)}(\bar{q}^*,\bar{g}_N,\bar{q}_0,\bar{g}_0)=M_N(f_1)+o_P(N^{-1/2})
\]
for some $f_1$, where $M_N(f)=\frac{1}{N} \sum_{t=1}^N \{f(C_o(t),O(t))-P_{0,C_o(t)}f\}$. Note however, that if $\bar{q}^*=\bar{q}_0$, we do not need the second part of the assumption since: $$\frac{1}{N} \sum_{t=1}^N R_{2,C_o(t)}(\bar{q}^*,\bar{g}_N,\bar{q}_0,\bar{g}_0) = 0.$$

\item Let $\bar{f}=D^*_{C_0}(\bar{q}^*,\bar{g}_0)+f_1$. Assume $\frac{1}{N} \sum_{t=1}^N P_{0,C_o(t)}\bar{f}^2\rightarrow\sigma^2_0$ a.s.
\end{enumerate}

\begin{theorem} [Conditional density based TMLE]\label{thmultgint}
Let $\bar{q}_N^*$ be the one-step or iterative targeted estimate of $\bar{q}_0$, such that: $$\frac{1}{N} \sum_{t=1}^N D^*_{C_o(t)}(\bar{q}_{N,\epsilon_N},\bar{g}_N)(O(t)) =o_P(N^{-1/2}).$$ If C1, C2, C3 and C4 hold, then:
\vspace{3mm}
\[
\sqrt{N} (\bar{\Psi}(\bar{q}_N^*) - \bar{\Psi}(\bar{q}_0)) \Rightarrow N(0,\sigma^2),
\] 
\vspace{5mm}
\noindent
where  $\sigma^2_0$ is the limit of $\frac{1}{N} \sum_{t=1}^N P_{0,C_o(t)}\bar{f}^2$ and $\bar{f}=D^*_{C_0}(\bar{q}^*,\bar{g}_0)+f_1$.
\end{theorem}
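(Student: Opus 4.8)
The plan is to derive Theorem \ref{thmultgint} as a direct application of the general Theorem \ref{thgen}, checking that the present conditions C1--C4 instantiate the corresponding conditions of that theorem for the longitudinal block $O(t)$. First I would recall the exact second order expansion (\ref{g1a}) of $\bar{\Psi}(\bar{q}_N^*)-\bar{\Psi}(\bar{q}_0)$ for the $C_o(t)$-specific counterfactual mean, whose $C_o(t)$-specific remainder is $R_{2,C_o(t)}(\bar{q}_N^*,\bar{g}_N,\bar{q}_0,\bar{g}_0)$. Since the TMLE $\bar{q}_N^*$ is constructed to solve the efficient score equation $\frac{1}{N}\sum_t D^*_{C_o(t)}(\bar{q}_{N,\epsilon_N},\bar{g}_N)(O(t))=o_P(N^{-1/2})$, the leading term of this expansion is the centered martingale sum $M_N(D^*(\bar{q}_N^*,\bar{g}_N))$, leaving only the averaged remainder to control.

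Next I would invoke Lemma \ref{lemmaasympequi}. Condition C1 places $D^*_{C_o}(\bar{q}_N^*,\bar{g}_N)$ in the cadlag class ${\cal F}$ with uniformly bounded sectional variation norm, which satisfies the entropy integral (\ref{entropyintegral}); combined with the $L^2$-consistency assumed in C2, the lemma yields the asymptotic equicontinuity $M_N(D^*(\bar{q}_N^*,\bar{g}_N))=M_N(D^*(\bar{q}^*,\bar{g}_0))+o_P(N^{-1/2})$, replacing the estimated, possibly misspecified influence curve by its fixed limit $D^*_{C_0}(\bar{q}^*,\bar{g}_0)$.

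The crux is the averaged second order remainder, where I would exploit the \emph{double robustness} of $R_{2,C_o(t)}$ --- its representation as a sum over $j=1,\ldots,K+1$ of integrals of cross-products of $(\bar{q}-\bar{q}_0)$ and $(\bar{g}-\bar{g}_0)$. The (bi)linear structure of this remainder gives the exact split
\[
R_{2,C_o(t)}(\bar{q}_N^*,\bar{g}_N,\bar{q}_0,\bar{g}_0)=R_{2,C_o(t)}(\bar{q}_N^*,\bar{g}_N,\bar{q}^*,\bar{g}_0)+R_{2,C_o(t)}(\bar{q}^*,\bar{g}_N,\bar{q}_0,\bar{g}_0),
\]
where the first piece pairs $(\bar{g}_N-\bar{g}_0)$ with $(\bar{q}_N^*-\bar{q}^*)$ and the second pairs $(\bar{g}_N-\bar{g}_0)$ with the fixed $(\bar{q}^*-\bar{q}_0)$. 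By the first part of C3 the averaged first piece is $o_P(N^{-1/2})$, and by the second part of C3 the averaged second piece admits the martingale linearization $M_N(f_1)+o_P(N^{-1/2})$, the smoothness of $\bar{g}\mapsto H_j(\bar{g})$ supplying the first-order expansion in $\bar{g}_N-\bar{g}_0$; when $\bar{q}^*=\bar{q}_0$ this piece vanishes and $f_1=0$. Collecting terms yields $\bar{\Psi}(\bar{q}_N^*)-\bar{\Psi}(\bar{q}_0)=M_N(\bar{f})+o_P(N^{-1/2})$ with $\bar{f}=D^*_{C_0}(\bar{q}^*,\bar{g}_0)+f_1$, a discrete martingale sum, and the martingale central limit theorem together with C4 identifies the limiting variance $\sigma_0^2=\lim_N\frac{1}{N}\sum_t P_{0,C_o(t)}\bar{f}^2$, giving $\sqrt{N}(\bar{\Psi}(\bar{q}_N^*)-\bar{\Psi}(\bar{q}_0))\Rightarrow N(0,\sigma_0^2)$.

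I expect the \textbf{main obstacle} to be the explicit double-robust decomposition of the longitudinal remainder $R_{2,C_o(t)}$ and the verification that its $\bar{g}$-driven part is genuinely a smooth functional admitting the martingale linearization of C3. Unlike the single-time-point case of Theorem \ref{thsingint}, here $R_2$ is a sum over the $K+1$ sequential-regression layers, each contributing a telescoping cross-product, and one must track how the clever covariates $H_j(\bar{g})$ and the nested conditional means $\bar{Q}_{A(t,j)},\bar{Q}_{L(t,j)}$ propagate perturbations in $\bar{g}$ and $\bar{q}$ through the recursion; confirming that the resulting terms collapse exactly into the two C3 sub-conditions, with the ratio-induced higher-order pieces absorbed into $o_P(N^{-1/2})$, is the delicate bookkeeping step.
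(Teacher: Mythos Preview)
Your proposal is correct and follows essentially the same route as the paper: combine the efficient score equation with the exact second order expansion, use Lemma \ref{lemmaasympequi} under C1--C2 to replace $D^*(\bar{q}_N^*,\bar{g}_N)$ by $D^*(\bar{q}^*,\bar{g}_0)$ in the martingale sum, split the remainder via the double-robust bilinear structure into $R_{2,C_o(t)}(\bar{q}_N^*,\bar{g}_N,\bar{q}^*,\bar{g}_0)+R_{2,C_o(t)}(\bar{q}^*,\bar{g}_N,\bar{q}_0,\bar{g}_0)$, handle each piece by C3 (with the $\bar{q}^*=\bar{q}_0$ case trivial), and conclude by the martingale CLT with C4. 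The paper adds only a Cauchy--Schwarz remark to motivate the first half of C3 and a brief discussion of when the martingale linearization in the second half is plausible (parametric MLE for $\bar{g}$ versus machine learning), but the logical skeleton is exactly yours.
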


\begin{proof} The second order expansion for the $C_o(t)$-specific multiple-time-point intervention target parameter is as follows: \begin{equation}\label{fe1}\begin{aligned}\Psi_{C_o(t)}(\bar{q}^*_N, \bar{g}_N)-\Psi_{C_o(t)}(\bar{q}_0, \bar{g}_0) &=  - P_{0,C_o(t)} D^*_{C_o(t)}(\bar{q}^*_N, \bar{g}_N)\} \\&\phantom{{}=0} + R_{2,C_o(t)}(\bar{q}^*_N, \bar{g}_N,\bar{q}_0,\bar{g}_0),\end{aligned}\end{equation}where $R_{2,C_o(t)}(\bar{q}^*_N, \bar{g}_N,\bar{q}_0,\bar{g}_0)$ defined above represents a double robust structured remainder, a difference between $(\bar{q}^*_N, \bar{g}_N)$ and $(\bar{q}_0,\bar{g}_0)$, and $\Psi_{C_o(t)}(\bar{q}_N^*, \bar{g}_N)$ is the TMLE. By combining this equation with the efficient score equation, we obtain the following second order expansion for the average over time $C_o(t)$-specific multiple-time-point intervention target parameter:\begin{eqnarray*}\bar{\Psi}(\bar{q}^*_N, \bar{g}_N) - \bar{\Psi}(\bar{q}_0, \bar{g}_0)& =& \frac{1}{N} \sum_{t=1}^N \{D^*_{C_o(t)}(\bar{q}^*_N,\bar{g}_N)-P_{0,C_o(t)} D^*_{C_o(t)}(\bar{q}^*_N, \bar{g}_N)\} \\&&+ \frac{1}{N} \sum_{t=1}^N R_{2,C_o(t)}(\bar{q}^*_N, \bar{g}_N,\bar{q}_0, \bar{g}_0).\end{eqnarray*}

\noindent
We note that ${\cal F}$ satisfies the required entropy conditions by C1. Consider a martingale process $(M_N(f) : f \in {\cal F})$ defined by: \[M_N(f)= \frac{1}{N} \sum_{t=1}^N \{f(C_o(t),O(t)) - P_{0,C_o(t)} f \}\]By Lemma \ref{lemmaasympequi}, we note that $M_N(f)$ is a martingale process that is asymptotically equicontinuous in the sense that convergence $M_N(f_N)-M_N(f)=o_P(N^{-1/2})$ if $\frac{1}{N} \sum_{t=1}^N P_{0,C_o(t)}\{f_N-f\}^2 \rightarrow_p 0$. Define $\theta^*=(\bar{q}^*, \bar{g}_0)$ as the limit of $\theta^*_N=(\bar{q}^*_{N}, \bar{g}_N)$. By C2 and the definition of asymptotic equicontinuity of a martingale process, we have that: \[\begin{array}{l}\frac{1}{N} \sum_{t=1}^N \left\{ D^*_{C_0}(\theta_N^*)(C_o(t),O(t)) - P_{0,C_o(t)}D^*_{C_0}(\theta_N^*)\right\}\\-\frac{1}{N}\sum_{t=1}^N \left\{ D^*_{C_0}(\theta^*)(C_o(t),O(t)) - P_{0,C_o(t)}D^*_{C_0}(\theta^*)\right\}=o_P(N^{-1/2}).\end{array}\]
The consistency condition of Theorem \ref{thgen} is established by noting that for all possible $C_o(t)$, we have that $R_{2,C_o(t)}(\bar{q}^*,\bar{g}_0,\bar{q}_0,\bar{g}_0)=0$. The second order expansion now takes the following form:\[\begin{array}{l}\bar{\Psi}(\bar{q}^*_N, \bar{g}_N) - \bar{\Psi}(\bar{q}_0, \bar{g}_0)=  \frac{1}{N} \sum_{t=1}^N \left\{ D^*(\bar{q}^*, \bar{g}_0) (C_o(t),O(t)) - P_{0,C_o(t)} D^*(\bar{q}^*, \bar{g}_0) \right\} \\\hspace*{6cm} + o_p(N^{-1/2}) +  \frac{1}{N} \sum_{t=1}^N R_{2,C_o(t)} (\bar{q}_{N}^*, \bar{g}_N, \bar{q}_{0}, \bar{g}_0).\end{array}\]

\vspace{2mm}
\noindent
Consider now the term:\begin{equation*}\label{ltmlea}\frac{1}{N}\sum_{t=1}^N R_{2,C_o(t)}(\bar{q}_N^*,\bar{g}_N,\bar{q}_0,\bar{g}_0).\end{equation*}Due to the double robust structure of the second order remainder, we can represent $R_{2,C_o(t)}(\bar{q},\bar{g},\bar{q}_0,\bar{g}_0)$ as a sum of terms with structure $\int (H_1(\bar{g})-H_1(\bar{g}_0))(H_2(\bar{q})-H_2(\bar{q}_0))H_3(P_0)dP_0$ for some specified $H_1,H_2$ and $H_3$. Therefore we have the following decomposition of the second order remainder, which considers convergence of $\bar{q}$ and $\bar{g}$ separately:\[R_{2,C_o(t)}(\bar{q}_N^*,\bar{g}_N,\bar{q}_0,\bar{g}_0)=R_{2,C_o(t)}(\bar{q}_N^*,\bar{g}_N,\bar{q}^*,\bar{g}_0)+R_{2,C_o(t)}(\bar{q}^*,\bar{g}_N,\bar{q}_0,\bar{g}_0).\] 
By the Cauchy-Schwarz inequality, we can bound the first term by the product of $L^2(P_0)$-norm of $H_1(\bar{g}_N) - H_1(\bar{g}_0)$ and $L^2(P_0)$-norm of $H_2(\bar{q}_N^*)-H_2(\bar{q}^*)$. Therefore, it is reasonable to assume  assumption C3, so that $\frac{1}{N} \sum_{t=1}^N R_{2,C_o(t)} (\bar{q}_N^*,\bar{g}_N,\bar{q}^*,\bar{g}_0)=o_P(N^{-1/2})$.
Regarding the second term, we consider two cases.
\vspace{2mm}
\noindent
\textbf{Case 1: $\bar{q}^*=\bar{q}_0$:} Note that the second term in the above expression is zero for $\bar{q}^*=\bar{q}_0$. 

\vspace{2mm}
\noindent
\textbf{Case 2: $\bar{q}^*\not =\bar{q}_0$:} The second order remainder, $\frac{1}{N} \sum_{t=1}^N R_{2,C_o(t)}(\bar{q}^*,\bar{g}_N,\bar{q}^*,\bar{g}_0)$ corresponds with a term of the following structure for a certain $f$: $$\int (H_1(\bar{g}_N)-H_1(\bar{g}_0)) f dP_0.$$ Condition C3 assumes that $\int (H_1(\bar{g}_N)-H_1(\bar{g}_0)) f dP_0$ can be approximated by a martingale sum $M_N(f_1)$ for some $f_1$. If $\bar{g}_N$ is an MLE according to a correct model, then this assumption will follow naturally since MLEs can be approximated by martingale sums, just as MLE for i.i.d. parametric models. On the other hand if $\bar{g}_N$ is estimated using machine learning, we note that this condition could be potentially problematic. A possible solution would be to target $\bar{g}_N$ towards the particular parameter $\bar{g} \rightarrow \int H_1(\bar{g}) fdP_0$. We refer the interested reader to \cite{benkeser2017,vdl2014dr} for more details regarding this targeting strategy. Under the above assumptions, we have established the following:\begin{eqnarray*}\bar{\Psi}(\bar{q}_N^*)-\bar{\Psi}(\bar{q})&=&\frac{1}{N} \sum_{t=1}^N  \left\{ D^*_{C_o(t)}(\bar{q}^*,\bar{g}_0)(O(t)) - \bar{P}_{0,C_o(t)}D^*_{C_o(t)}(\bar{q}^*,\bar{g}_0) \right\}\\&&+\frac{1}{N} \sum_{t=1}^N \left\{f_1(C_o(t),O(t))-P_{0,C_o(t)}f_1\right\} + o_P(N^{-1/2}).\end{eqnarray*}
 As a consequence, we have that:\[\sqrt{N} (\bar{\Psi}(\bar{q}_N^*) - \bar{\Psi}(\bar{q}_0)) \Rightarrow N(0,\sigma^2).\]
 We note that the second term will only reduce the asymptotic variance, and as such one could decide to ignore it in variance estimation. The resulting inference is asymptotically conservative, except if $\bar{q}^*=\bar{q}_0$. 
 \end{proof}

\subsection{Defining the TMLE through sequential regression}
The TMLE described in the previous two subsections requires the estimation of the conditional density $p_{0,C_o(t)}$ of $O(t)$ given $C_o(t)$. More specifically, in order to construct a TMLE we need to estimate conditional densities $(\bar{q}_0,\bar{g}_0)$. This could be potentially problematic, since if $L(t,j)$ is high-dimensional, the construction of initial estimators of $\bar{q}_{0,j}$ is challenging. In order to alleviate this issue, we remind that the target parameter $\Psi_{C_o(t)}(\bar{q})$ could also be written as an iterative conditional expectation, as noted in (\ref{seqregrepr}). In particular, we have that:
\[
\Psi_{C_o(t)}(\bar{q})=\bar{Q}_{A(t,0)}(C_o(t)),
\]
which provides an iterative conditional expectation representation of $\Psi_{C_o(t)}(\bar{q})$. We emphasize that the last conditional expectation, $\bar{Q}_{A(t,0)}$, has integrated out all variables in $O(t)=(A(t,1:K),L(t,1:K+1))$, and it is therefore only a function of $C_o(t)$. Note that this new representation of the efficient influence curve depends on $\bar{q}$ only through the iteratively defined conditional expectations, $\bar{Q}=(\bar{Q}_{A(t,K+1)},\bar{Q}_{L(t,K+1)},\ldots,\bar{Q}_{A(t,0)})$. Therefore, we can denote the efficient influence curve as $\bar{D}^*_{C_o(t)}(\bar{Q},\bar{g})$ instead. 

\vspace{2mm}
\noindent
The above described representation of the target parameter suggests we can focus our statistical model and TMLE on the conditional expectations instead of the whole conditional density. In particular, our model assumptions on $P_{O(t)\mid C_o(t)}$ can now be replaced by equivalent assumptions on $\bar{Q}_{L(t,j)}$ and $\bar{Q}_{A(t,j)}$. We assume $\bar{Q}_{L(t,j)}=\bar{Q}_{L(j)}$ for  $j=1,\ldots,K+1$ and  $\bar{Q}_{A(t,j)}=\bar{Q}_{A(j)}$ for $j=1,\ldots,K$ are constant in time $t$. Additionally, as before, we assume that for  $j\in {\cal A}_1$, $g_{t,j}$ is known for each $t$ since the probability of assigning treatment is controlled by the experimenter. On the other hand, for $j\in {\cal A}_2$ we assume that $g_{t,j}$ is described by a common (in time $t$) $\bar{g}_j$. For notational simplicity, we denote all $g_{t,j}$ with $\bar{g}_j$. With that, we have redefined our statistical model ${\cal M}^N$ and ${\cal M}(C_o(t))$ for each $C_o(t)$. We reiterate the target parameter, focusing only on the iterative conditional expectation representation:
\[
\bar{\Psi}(\bar{Q})=\frac{1}{N}\sum_{t=1}^N \bar{Q}_{A(0)}(C_o(t)).
\]
The TMLE will now estimate these functions $\bar{Q}_{L(t,j)}(C_o(t),O(t))$ sequentially, with $\bar{Q}_{A(0)}(C_o(t))$ being the last one. 

\vspace{2mm}
\noindent
As before, we construct the initial estimator of $\bar{g}_0$, namely $\bar{g}_N$. The initial estimator $\bar{Q}_{N,L(K+1)}$ of $\bar{Q}_{L(K+1)}$ is obtained based on the following loss function:
\[
\begin{array}{l}
L(\bar{Q}_{L(K+1)})(O^N)\\
=-\sum_{t=1}^N \left\{Y(t)\log \bar{Q}_{L(K+1)}(O(t),C_o(t))+(1-Y(t))\log(1-\bar{Q}_{L(K+1)}(O(t),C_o(t))\right\}
\end{array}
\]
To put it in more context, we note that one could fit a logistic linear regression model of binary outcome $Y(t)$ onto covariates extracted from $L(t,1:K),A(t,1:K)$ and $C_o(t)$. Conditioning on $L(t,1:K),A(t,1:K)$ and $C_o(t)$ essentially treats data records $(Y(t),L(t,1:K),A(t,1:K),C_o(t))$, $t=1,\ldots,N$, as i.i.d. In particular, this estimation strategy would correspond with maximizing the empirical log-likelihood, $-L(\bar{Q}_{L(K+1)})(O^N)$, over a parametric model for $\bar{Q}_{L(K+1)}$. While simple and intuitive, in general we don't expect logistic linear regression to be the appropriate model for the outcome. Instead we advocate for the use of the Super-Learner based on online cross-validated risk, for example. With this estimation strategy, we once again treat $t$-specific data records as i.i.d. in the candidate estimators, while relying on appropriate cross-validation schemes for dependent settings (for instance, one might use online cross-validation). 

\vspace{2mm}
\noindent
Given this initial estimator $\bar{Q}_{N,L(K+1)}$, we proceed to define a parametric working model $\{\bar{Q}_{N,L(K+1),\epsilon}:\epsilon\}$ with finite-dimensional parameter $\epsilon$ so that $\epsilon = 0$ denotes $\bar{Q}_{N,L(K+1)}$. We define a universal least favorable submodel with a logistic fluctuation: 
\[
\mbox{Logit} \bar{Q}_{N,L(K+1),\epsilon}=\mbox{Logit} \bar{Q}_{N,L(K+1)}+\epsilon H_{K+1}(\bar{g}_N),
\]
where $H_{K+1}(\bar{g})$ is the clever covariate for the target parameter. Note that  for each $\epsilon$, we have that:
 \[
 \frac{d}{d\epsilon} L(\bar{Q}_{N,L(K+1),\epsilon})(O^N) = \sum_{t=1}^N D^*_{C_o(t),K+1}(\bar{Q}_{N,L(K+1),\epsilon}, \bar{g}_N).
 \]
 \noindent
Further, we define the maximum likelihood estimator of $\epsilon$ given by:
\[
\epsilon_N^{K+1}=\arg\min_{\epsilon} L(\bar{Q}_{N,L(K+1),\epsilon})(O^N).
\]
Notice that $\epsilon_N^{K+1}$ corresponds with fitting a univariate logistic regression model with covariate $H_{K+1}(\bar{g}_N)$ and offset $\mbox{Logit}\bar{Q}_{N,L(K+1)}$
based on data $(Y(t),H_{K+1}(\bar{g}_N)(C_o(t),O(t)) )$, $t=1,\ldots,N$. The updated TMLE fit of $\bar{Q}_{0,L(K+1)}$ is given by $\bar{Q}_{N,L(K+1)}^*=\bar{Q}^{K+1}_{N,L(K+1),\epsilon_N}$. Due to this TMLE-step, we solve the following estimating equation:
\[
\begin{aligned}
0 &= \frac{1}{N}\sum_{t=1}^N D^*_{C_o(t),K+1}(\bar{Q}_{N,L(K+1)}^*,\bar{g}_N)(O(t)) \\
&= \frac{1}{N}\sum_t H_{K+1}(\bar{g}_N)(C_o(t),O(t))(Y(t)-\bar{Q}_{L(K+1),N}^*).
\end{aligned}
\]
Note that we now have the function $(C_o(t),O(t)) \rightarrow \bar{Q}_{N,L(K+1)}^*(C_o(t),L(t,1:K),A(t,1:K))$. 
Further, we define the following: 
\[
\bar{Q}_{N,A(K)}^*(\cdot)=\int_{a(K)}\bar{Q}_{N,L(K+1)}^*(\cdot,a(K)) \ \bar{g}^*_K(a(K)\mid C_{a,K}^*(t)).
\]
Note that $\bar{Q}_{N,A(K)}^*$ is now a function of $C_o(t),L(t,1:K)$ and $A(t,1:K-1)$. As such, we can evaluate $\bar{Q}_{N,A(K)}^*$ for each $(C_o(t),O(t))$, $t=1,\ldots,N$. Given $\bar{Q}_{N,A(K)}^*$, we define the appropriate loss for $\bar{Q}_{L(K)}$ as: 
\[
\begin{array}{l}
L(\bar{Q}_{L(K)})(O^N)\\
=-\sum_{t=1}^N \left\{\bar{Q}_{N,A(K)}^*\log \bar{Q}_{L(K)}+(1-\bar{Q}_{N,A(K)}^*)\log(1-\bar{Q}_{L(K)}\right\}(C_o(t),O(t)).
\end{array}
\]
For example, one could fit a logistic regression for  outcome $\bar{Q}_{N,A(K)}^*$ onto covariates extracted from $L(t,1:K-1),A(t,1:K-1)$ and $C_o(t)$, as if the data records corresponding to 
$(\bar{Q}_{N,A(K)}^*(O(t),C_o(t))$,  $L(t,1:K-1),A(t,1:K-1),C_o(t))$ are i.i.d. and the outcome is binary. This would correspond with maximizing the empirical log-likelihood, $-L(\bar{Q}_{L(K)})(O^N)$, over a parametric model for $\bar{Q}_{L(K)}$. As discussed previously, in general we advocate for the use of Super-Learning based on the online cross-validated risk for the initial estimation step. 

\vspace{2mm}
\noindent
We define $\bar{Q}_{N,L(K)}$ as the resulting estimator, obtained by using logistic regression or Super-Learning as discussed above. We can now define the logistic fluctuation model for  $\bar{Q}_{N,L(K)}$:
\[
\mbox{Logit}\bar{Q}_{N,L(K),\epsilon}=\mbox{Logit}\bar{Q}_{N,L(K)}+\epsilon H_{K}(\bar{g}_N).
\]
Further, we define the MLE of $\epsilon$ as before:
\[
\epsilon_N^K=\arg\min_{\epsilon} L(\bar{Q}_{N,L(K),\epsilon})(O^N).
\]
Notice that $\epsilon_{N}^K$ corresponds with fitting a univariate logistic regression model with covariate $H_{K}(\bar{g}_N)$ and offset $\mbox{Logit}\bar{Q}_{N,L(K)}$
based on data $(\bar{Q}_{N,A(K)}^*,H_{K}(\bar{g}_N(O(t),C_o(t)) )$ for all $t=1,\ldots,N$. Once again, we note that for all $\epsilon$ we have that:
\[
\frac{d}{d\epsilon} L(\bar{Q}_{N,L(K),\epsilon})(O^N) =\sum_t D^*_{C_o(t),K}(\bar{Q}_{N,L(K),\epsilon}, \bar{g}_N)(O(t)).
\]
The updated TMLE fit or $\bar{Q}_{N,L(K)}$ is given by $\bar{Q}_{N,L(K)}^*=\bar{Q}^K_{N,L(K),\epsilon_N}$. As a consequence of the $K$-specific TMLE targeting step, we have that:
\begin{align*}
0 &= \frac{1}{N} \sum_{t=1}^N D^*_{C_o(t),K}(\bar{Q}_{N,L(K+1)}^*,\bar{Q}_{N,L(K)}^*,\bar{g}_N)(O(t)) \\
&= \frac{1}{N} \sum_{t=1}^N H_{K}(\bar{g}_N)(C_o(t),O(t))(\bar{Q}_{N,A(K)}^*-\bar{Q}_{N,L(K)}^*(C_o(t),O(t)).
\end{align*}
Analogue to the calculation performed for $\bar{Q}_{N,A(K)}^*$, we can now compute $\bar{Q}_{N,A(K-1)}^*$ by integrating out $A(K-1)$ with respect to $\bar{g}^*_{K-1}$. 

\vspace{2mm}
\noindent
We have now showed the general procedure for performing sequential regression based TMLE, concentrating on the first two iterations in the iterative conditional expectation representation of the target parameter. We further iterate the above described process, until we obtain the targeted estimator $\bar{Q}_{N,A(0)}^*$. We note that the targeted estimator  $\bar{Q}_{N,A(0)}^*$ is derived by integrating out $A(0)$ in $\bar{Q}_{N,L(1)}^*$ w.r.t.  the stochastic intervention $\bar{g}^*_0$ of the first intervention node $A(t,0)$.  This yields a general function $C_o\rightarrow \bar{Q}_{N,A(0)}^*(C_o)$, which can be applied to $C_o(t)$  providing $\bar{Q}_{N,A(0)}^*(C_o(t))$, for all $t=1,\ldots,N$.
\vspace{2mm}
\noindent
With that, we formally define the sequential regression based TMLE as:
\[
\bar{\Psi}(\bar{Q}_N^*)=\frac{1}{N}\sum_{t=1}^N\bar{Q}_{N,A(0)}^*(C_o(t))\] of 
\[
\bar{\Psi}(\bar{Q}_0)=\frac{1}{N}\sum_{t=1}^N \bar{Q}_{0,A(0)}(C_o(t))=\frac{1}{N}\sum_{t=1}^N E_0(Y_{\bar{g}^*}(t)\mid C_o(t)).
\]
Most importantly, the targeting steps have enabled for solving estimating equations for each $j=K+1,\ldots,1$. Therefore, we have that for all $j$:
\begin{align*}
0 &= \frac{1}{N} \sum_{t=1}^N D^*_{C_o(t),j}(\bar{Q}_{N,A(j)}^*,\bar{Q}_{N,L(j)}^*,\bar{g}_N)(O(t)) \\
&= \frac{1}{N} \sum_{t=1}^NH_{j}(\bar{g}_N)(C_o(t),O(t))(\bar{Q}_{N,A(j)}^*-\bar{Q}_{N,L(j)}^*(C_o(t),O(t)).
\end{align*}

\vspace{2mm}
\noindent
{\bf Analysis of the sequential regression TMLE:}
We once again apply Theorem \ref{thgen} in order to perform analysis of the sequential regression based TMLE, with the target parameter being the average over time of context-specific causal effects with multiple time-point interventions. We first define the necessary conditions for the next theorem below.
\vspace{2mm}
\noindent
\begin{enumerate}
\item Define ${\cal F}$ to be a class of multivariate, real valued cadlag functions on an Euclidean cube $[0,\tau]$ containing ${\cal C}\times {\cal O}$ with sectional variation norm $\pl f\pl_v^*$, $f \in \cal F$, bounded by an universal constant $M<\infty$. We assume $ \{(C_o,o)\rightarrow D^*_{C_o}(\bar{Q},\bar{g})(o):\bar{Q}\in {\cal Q}, \bar{g}\in {\cal G}\}\subset{\cal F}$. 

\item Assume $\frac{1}{N} \sum_{t=1}^N P_{0,C_o(t)} \{ D^*_{C_o(t)}(\bar{Q}_N^*,\bar{g}_N) - D^*_{C_o(t)}(\bar{Q}^*,\bar{g}_0)\}^2 \rightarrow_p 0$ as $N\rightarrow\infty$ for some possibly misspecified limit $\bar{Q}^*$.

\item Assume negligible or asymptotic linearity of the remainder, such that:
\[
\frac{1}{N} \sum_{t=1}^N R_{2,C_o(t)}(\bar{Q}_N^*,\bar{g}_N,\bar{Q}^*,\bar{g}_0)=o_P(N^{-1/2}),
\] 
and
\[
\frac{1}{N} \sum_{t=1}^N R_{2,C_o(t)}(\bar{Q}^*,\bar{g}_N,\bar{Q}_0,\bar{g}_0)=M_N(f_1)+o_P(N^{-1/2})
\]
for some $f_1$, where $M_N(f)=\frac{1}{N} \sum_{t=1}^N \{f(C_o(t),O(t))-P_{0,C_o(t)}f\}$. Note however, that if $\bar{Q}^*=\bar{Q}_0$, we do not need the second part of the assumption since: $$\frac{1}{N} \sum_{t=1}^N R_{2,C_o(t)}(\bar{Q}^*,\bar{g}_N,\bar{Q}_0,\bar{g}_0) = 0.$$

\item Let $\bar{f}=D^*_{C_0}(\bar{Q}^*,\bar{g}_0)+f_1$. Assume $\frac{1}{N} \sum_{t=1}^N P_{0,C_o(t)}\bar{f}^2\rightarrow\sigma^2_0$.
\end{enumerate}

\begin{theorem} [Sequential regression based TMLE] \label{thmultgint2}
Let $\bar{Q}_N^*$ be the above described sequential regression TMLE of $\bar{Q}_0$, such that: $$\frac{1}{N} \sum_{t=1}^N D^*_{C_o(t)}(\bar{Q}_{N,\epsilon_N},\bar{g}_N)(O(t)) =0.$$ If C1, C2, C3 and C4 hold, then:
\vspace{3mm}
\[
\sqrt{N} (\bar{\Psi}(\bar{Q}_N^*) - \bar{\Psi}(\bar{Q}_0)) \Rightarrow N(0,\sigma^2),
\] 
\vspace{5mm}
\noindent
where  $\sigma^2_0$ is the limit of $\frac{1}{N} \sum_{t=1}^N P_{0,C_o(t)}\bar{f}^2$ and $\bar{f}=D^*_{C_0}(\bar{Q}^*,\bar{g}_0)+f_1$.
\end{theorem}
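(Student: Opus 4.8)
The plan is to follow the proof of Theorem \ref{thmultgint} essentially verbatim, since the target parameter and its canonical gradient in this subsection are the \emph{same} objects as before, merely re-expressed through the iterated conditional expectations $\bar{Q}=(\bar{Q}_{A(t,K+1)},\ldots,\bar{Q}_{A(t,0)})$ instead of the conditional densities $\bar{q}$; recall the sequential regression representation (\ref{seqregrepr}) gives $\Psi_{C_o(t)}(\bar{q})=\bar{Q}_{A(t,0)}(C_o(t))$, and the gradient $D^*_{C_o(t)}(\bar{Q},\bar{g})=\sum_{j=1}^{K+1}D^*_{C_o(t),j}(\bar{Q},\bar{g})$ depends on $\bar{q}$ only through $\bar{Q}$. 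The first step is therefore to confirm that the sequentially constructed $\bar{Q}_N^*$ solves the aggregate efficient score equation. The construction of the previous subsection solves, at each stage $j$, the per-stage equation $\frac{1}{N}\sum_t D^*_{C_o(t),j}(\bar{Q}_{N,A(j)}^*,\bar{Q}_{N,L(j)}^*,\bar{g}_N)(O(t))=0$; summing over $j=1,\ldots,K+1$ and using $D^*_{C_o(t)}=\sum_j D^*_{C_o(t),j}$ yields $\frac{1}{N}\sum_t D^*_{C_o(t)}(\bar{Q}_N^*,\bar{g}_N)(O(t))=0$, placing us in the setting of Theorem \ref{thgen}.

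Next I would invoke the exact second order expansion (\ref{g1}) for each $C_o(t)$-specific parameter, now in the $\bar{Q}$-parametrization, and combine it with the aggregate score equation to obtain
\[
\bar{\Psi}(\bar{Q}_N^*)-\bar{\Psi}(\bar{Q}_0)=\frac{1}{N}\sum_t\left\{D^*_{C_o(t)}(\bar{Q}_N^*,\bar{g}_N)-P_{0,C_o(t)}D^*_{C_o(t)}(\bar{Q}_N^*,\bar{g}_N)\right\}+\frac{1}{N}\sum_t R_{2,C_o(t)}(\bar{Q}_N^*,\bar{g}_N,\bar{Q}_0,\bar{g}_0).
\]
By C1 the class ${\cal F}$ satisfies the entropy integral (\ref{entropyintegral}), so Lemma \ref{lemmaasympequi} applies; together with the $L^2$-consistency assumption C2 toward the limit $\theta^*=(\bar{Q}^*,\bar{g}_0)$, asymptotic equicontinuity of the martingale process $M_N$ lets me replace $D^*_{C_o(t)}(\bar{Q}_N^*,\bar{g}_N)$ by $D^*_{C_o(t)}(\bar{Q}^*,\bar{g}_0)$ in the leading term at the cost of $o_P(N^{-1/2})$, exactly as in the proof of Theorem \ref{thmultgint}.

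For the remainder I would exploit its double robust structure and split
\[
R_{2,C_o(t)}(\bar{Q}_N^*,\bar{g}_N,\bar{Q}_0,\bar{g}_0)=R_{2,C_o(t)}(\bar{Q}_N^*,\bar{g}_N,\bar{Q}^*,\bar{g}_0)+R_{2,C_o(t)}(\bar{Q}^*,\bar{g}_N,\bar{Q}_0,\bar{g}_0),
\]
using $R_{2,C_o(t)}(\bar{Q}^*,\bar{g}_0,\bar{Q}_0,\bar{g}_0)=0$ for the consistency bookkeeping. The first piece is a cross product of $(\bar{Q}_N^*-\bar{Q}^*)$ and $(\bar{g}_N-\bar{g}_0)$ differences, hence $o_P(N^{-1/2})$ by Cauchy--Schwarz and the first part of C3. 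For the second piece I would distinguish the two cases of C3: when $\bar{Q}^*=\bar{Q}_0$ the term vanishes, and when $\bar{Q}^*\neq\bar{Q}_0$ it has the form $\int(H_1(\bar{g}_N)-H_1(\bar{g}_0))f\,dP_0$ and is assumed representable as a martingale sum $M_N(f_1)+o_P(N^{-1/2})$. Collecting the surviving terms gives a martingale with summand $\bar{f}=D^*_{C_o(t)}(\bar{Q}^*,\bar{g}_0)+f_1$, and the martingale central limit theorem together with the variance stabilization C4 delivers the $N(0,\sigma_0^2)$ limit.

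The only step genuinely particular to this theorem, and hence the main obstacle, is the very first one: verifying that the per-stage targeting equations telescope into the single aggregate efficient score equation. One must check that, because $\bar{Q}_{N,A(j)}^*$ is formed by integrating the already-targeted $\bar{Q}_{N,L(j+1)}^*$ against the intervention $\bar{g}^*$, the equation solved at stage $j$ persists at the final fit without generating residual cross-terms, and that the $P_{0,C_o(t)}$-centering appearing in each $D^*_{C_o(t),j}$ is correctly read as a conditional expectation given $C_o(t)$ in the dependent (martingale) setting. This is the standard iterated-conditional-expectation TMLE argument from the i.i.d. longitudinal literature; once that bookkeeping is in place, the remainder of the argument is identical to that of Theorem \ref{thmultgint}.
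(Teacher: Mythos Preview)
Your proposal is correct and takes essentially the same approach as the paper, which simply states that the proof is an immediate application of Theorem~\ref{thgen}. You have spelled out the verification of the hypotheses of Theorem~\ref{thgen} (in particular, that the per-stage score equations sum to the aggregate efficient score equation) that the paper leaves implicit.
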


\vspace{3mm}
\noindent
The proof is an immediate application of Theorem \ref{thgen}.


\section{Adaptive design that learns the optimal individualized treatment rule within a single time-series}\label{sect5}

In this section we develop crucial theoretical foundations for the adaptive learning of the optimal individualized treatment rule based on a single unit. In particular, this section provides important contributions to the field of personalized medicine, whose general focus is on identifying which treatments and preventions will be effective for which individual. A treatment rule for a patient is an individualized treatment strategy based on the history accrued up to the most current time point. A reward is measured on the patient at repetitive units, and optimality is meant in terms of maximization of the mean reward at a particular time $t$. We emphasize the significance of our methodology for adaptive randomized trials within a single unit, which are tailored to approximate an optimal treatment rule as the number of time points grows. Similarly to previous sections, we define the data structure $O(t)=(A(t),Y(t),W(t))$, the $C_o(t)$-specific model and target parameter, and the average across time $t$ of context-specific target parameters. After having defined the estimation problem, we present the TMLE and apply our general Theorem \ref{thgen} to establish its asymptotic normality. 

\subsection{Statistical formulation}

{\bf Data:} Let the observed data be $O(t)=(A(t),Y(t),W(t))$, $t=1,\ldots,N$, where $O(t)$ is of a fixed dimension in time $t$, and is an element of a Euclidean set ${\cal O}$. Let $A(t)\in \{0,1\}$ be a binary treatment and $Y(t)$ the subsequent outcome. We assume that the space $\mathcal{O}$ is bounded, so that without loss of generality, we may assume that the outcome (rewards) are between and bounded away from 0 and 1. Additionally, we denote $W(t)$ as all other information collected after $A(t)$ that will be included in the history $C_o(t+1)$ for the next record $O(t+1)$, with history being defined as in Section 2.1. Finally, let $O^N=(O(t):t=1,\ldots,N)$ and let $P^N$ denote its probability measure. As before, we can factorize the probability density of the data according to the time ordering as follows:
\begin{align*}
p^N(o) &= \prod_{t=1}^{N}p_{a(t)}(a(t) | \bar{o}(t-1))  \prod_{t=1}^{N}p_{y(t)}(y(t) | \bar{o}(t-1),a(t)) \\ 
&\phantom{{}=0} \prod_{t=1}^{N}p_{w(t)}(w(t) | \bar{o}(t-1),y(t),a(t)) .
\end{align*}
Here, $p_{a(t)}$, $p_{y(t)}$ and $p_{w(t)}$ denote the conditional probability densities of $A(t)$, $Y(t)$ and $W(t)$ given the relevant past. We define $\mu_a$, $\mu_y$ and $\mu_w$ as the corresponding dominating measures. 

\vspace{2mm}
\noindent
{\bf Statistical model:} We assume that $P_{O(t)\mid \bar{O}(t-1)}$ depends on $\bar{O}(t-1)$ through a summary measure $C_o(t)=C_o(\bar{O}(t-1))\in {\cal C}$ of fixed dimension. As before, this conditional distribution is denoted with $P_{C_o(t)}$. From the likelihood stated above, we can see that the density $p_{C_o(t)}(a(t),y(t),w(t)\mid C_o(t))$ factorizes into three conditional densities corresponding to $A(t)$, $Y(t)$, and $W(t)$, respectively. If $g_{a(t)}$ is unknown, then we assume that $g_{a(t)} = g_{a(t)}(a(t)\mid C_o(t))$ is stationary in time $t$. We also assume that $q_{y(t)} = q_{y(t)}(y(t)\mid C_y(t))$ is stationary in time, and we make no stationarity assumptions on $q_{w(t)}$. We denote these densities with $g_{a(t)}$, $q_{y(t)}$ and $q_{w(t)}$ as defined in Section \ref{sect3}, with corresponding fixed-dimensional relevant histories $C_a(t)=C_o(t)$, $C_y(t)=(C_o(t),A(t))$ and $C_w(t)=(C_o(t),A(t),Y(t))$. Let $\bar{g}$ be the common conditional density of $A(t)$, given $C_o(t)$, and, if it is known we also denote it with $g_{a(t)}$ or $g_t$. Let $\bar{q}_y$ be the common conditional density of $Y(t)$, given $(A(t),C_o(t))$. Additionally, we define $\bar{Q}(C_o(t),A(t))=E_{P_{C_o(t)}}(Y(t)\mid C_o(t),A(t))$ to be the conditional mean of $Y(t)$ given $C_o(t)$ and $A(t)$. As such, we have that $\bar{Q}(C_o(t),A(t))=\int y \bar{q}_y(y\mid C_o(t),A(t))$, and $\bar{Q}$ is a common function across time $t$. We emphasize that we put no restrictions on $\bar{Q}$, but $\bar{g}$ might be modeled or even known. We suppress dependence of the conditional density on $q_{w(t)}$ in future reference, as this factor plays no role in estimation. In particular, $q_{w(t)}$ does not affect the efficient influence curve of $\Psi_{C_o(t)}$, allowing us to act as if $q_{w(t)}$ is known. We define $\theta=(\bar{g},\bar{q}_y)$ and let $\Theta={\cal G}\times {\cal Q}$ be the cartesian product of the two nonparametric parameter spaces for $\bar{g}$ and $\bar{q}_y$. Let $p_{\theta,C_o(t)}$ and $p_{\theta}^N$ be the density for $O(t)$ given $C_o(t)$ and $O^N$, implied by $\theta=(\bar{g},\bar{q}_y)$. This defines the statistical model ${\cal M}(C_o(t))$ for $P_{C_o(t)}$, and the model ${\cal M}^N$ for the data distribution $P^N$ of $O^N$.

\vspace{2mm}
\noindent
{\bf Estimating optimal treatment rule based on a parametric working model:}
First, we consider estimating the optimal treatment rule based on a parametric working model. Consider a treatment rule $C_o(t)\rightarrow d(C_o(t))\in \{0,1\}$ that maps the history $C_o(t)$ into a treatment decision for $A(t)$. We define a parametric working model for $\bar{q}$ indexed by parameter $\theta$ such that $\{\bar{q}_{\theta}:\theta\}$. Notice that under the specified working model, we have that:
$$\bar{Q}_{\theta}(C_o(t),a) = E(Y(t)\mid C_o(t),A(t)=a)=\int y\bar{q}_{\theta}(y\mid C_o(t),a).$$

\vspace{2mm} 
\noindent
We proceed to define the true conditional treatment effect \[
B_0(C_o(t))\equiv E_0(Y(t)\mid C_o(t),A(t)=1)-E_0(Y(t)\mid C_o(t),A(t)=0),
\]
which can be expressed as $B_{\theta}(C_o(t))=\bar{Q}_{\theta}(C_o(t),1)-\bar{Q}_{\theta}(C_o(t),0)$ under the parametric working model. The optimal treatment rule for $A(t)$ for the purpose of maximizing $Y(t)$ is given by: $$d_0(C_o(t))=I(B_0(C_o(t))>0).$$
Under the parametric working model, we note that the optimal treatment rule can be represented as: $$d_{\theta_0}(C_o(t))=I(B_{\theta_0}(C_o(t))>0).$$

\vspace{2mm} 
\noindent
Define $\theta_{t-1}$ to be the maximum likelihood estimate of $\theta_0$ based on the most current history, $\bar{O}(t)$, and according to the working model $\bar{q}_{\theta}$. We note that we could define $C_o(t)$ such that for each time point $t$, $\theta_{t-1}$ is included in the relevant history $C_o(t)$ for $O(t)$. We now define a current estimate of the rule as:
\[
d(C_o(t))=I(B_{\theta(t-1)}(C_o(t))>0).
\]
We emphasize that if the parametric model is very flexible, $B_{\theta(t-1)}$ might be a good approximation of the true conditional treatment effect $B_0(C_o(t))$. In that case, $d(C_o(t))$ is a good approximation of the optimal rule $d_0(C_o(t))$. Nevertheless, we argue that $\theta_{t-1}$ will converge to $\theta_0$ defined by a Kullback-Leibler projection of the true $\bar{q}_0$ onto the working model $\{\bar{q}_{\theta}:\theta\}$. Consequently, the rule $d(C_o(t))$ will converge to a fixed  $I(B_{\theta_0}(C_o(t))>0)$  as $t$ converges to infinity.

\vspace{2mm} 
\noindent
{\bf Estimating optimal treatment rule with machine learning approaches:}
Instead of considering a parametric working model, we explore estimation of the optimal treatment rule based on more flexible, possibly nonparametric approaches drawn from the machine learning literature. 
As in the previous subsection, we define $B_t(C_o(t))$ to be an estimator of the true blip function, $B_0(C_o(t))$, based on the most recent observations up to time $t$, $\bar{O}(t-1)$. In particular, we consider estimators studied in our previous work, including online Super-Learner of $\bar{Q}_0$ which provides convenient computational and statistical properties for dense time-series data \cite{online2014, onlinecv2018}. Additionally, we might consider Super-Learner that targets $B_0$ directly \cite{luedtke2016super}. Similarly as mentioned in the previous section, we can view $B_t(C_o(t))$ as just another univariate covariate extracted from the past, and include it in our definition of $C_o(t)$.  If $B_t$ is consistent for $B_0$, then the rule $d(C_o(t))$ will converge to the optimal rule $I(B_0(C_o(t))>0)$.

\vspace{2mm} 
\noindent
{\bf Target parameter:}
First, we consider the $C_o(t)$-specific conditional counterfactual mean under the treatment rule $d(C_o(t))$.  At each time $t$, we define the target parameter $\Psi_{C_o(t)}(P_{C_o(t)})$ with $\Psi_{C_o(t)}(P_{C_o(t)}) : {\cal M}(C_o(t))\rightarrow\openr$ as:

\begin{equation}
\Psi_{C_o(t)}(P_{C_o(t)})=E_{P_{C_o(t)}}(Y(t)\mid C_o(t),A(t)=d(C_o(t))).
\end{equation}
We note that $\Psi_{C_o(t)}(P_{C_o(t)}) = \int y \bar{q}_y(y\mid C_o(t),d(C_o(t)))d\mu_y(y)$, representing the conditional mean outcome of $Y(t)$ under the treatment decision $d(C_o(t))$. 
The efficient influence curve for $\Psi_{C_o(t)}(P_{C_o(t)})$ is given by:

\begin{equation}\label{efficadapt}
D^*_{C_o(t)}(\bar{Q})(O(t))=\frac{I(A(t)=d(C_o(t))}{\bar{g}(A(t)\mid C_o(t))}( Y(t)-E_{P_{C_o(t)}}(Y(t)\mid C_o(t),A(t))).
\end{equation}

\vspace{2mm} 
\noindent 
In line with our previous analysis, we describe another interesting target parameter defined as the average of $C_o(t)$-specific counterfactual  means under the treatment rule. In particular, the target parameter on ${\cal  M}^N$, $\Psi^N:{\cal M}^N\rightarrow\openr$ of the data distribution $P^N\in {\cal M}^N$ is defined as:
 \[
\bar{\Psi}(\bar{Q})=\frac{1}{N} \sum_{t=1}^N \Psi_{C_o(t)}(\bar{Q}).
\]
We emphasize that $\bar{\Psi}(\bar{Q})$ is a data dependent target parameter since its value depends on the realized $C_o(t)$, $t=1,\ldots,N$.

\vspace{2mm} 
\noindent 
{\bf Adaptive treatment assignment mechanism:}
In this subsection, we describe the important case when the treatment assignment is controlled by the experimentalist. First, note that a treatment rule could assign $A(t)=d_{\theta_{t-1}}(C_o(t))$ deterministically, therefore assigning treatment decisions according to the best estimate of the optimal treatment rule based on the current history with probability one. A stochastic treatment rule for $\bar{g}(a\mid C_o(t))$ is defined as a random perturbation around $d_{\theta_{t-1}}(C_o(t))$, so that it might assign $d_{\theta_{t-1}}(C_o(t))$ with high probability. 

\vspace{2mm} 
\noindent
Until a a sufficient number of time points is reached necessary to begin to learn the optimal rule, treatment is assigned equiprobably regardless of the past history. Once we have collected enough single-unit time points in order to obtain a primary estimate of the treatment rule, more time points are collected sequentially. Additional time-points collected are exploited in order to learn the optimal treatment rule, which is then approximated by a stochastic of deterministic treatment rule from which the next treatment assignment is drawn conditionally on the next observed relevant history.

 \subsection{Defining the TMLE}
 
Let $L(\bar{Q})(C_o(t),O(t))$ be a loss function for $\bar{Q}$, defined such that we have the following:
 \[
P_{\bar{Q}_0,C_o(t)}L(\bar{Q}_0) = \min_{\bar{Q}} P_{\bar{Q}_0,C_o(t)} L(\bar{Q}).
\]
Therefore, the true $\bar{Q}_0$ minimizes the risk of $L(\bar{Q})$ under $P_{\bar{Q}_0,C_o(t)}$. In particular, we define the loss function for $\bar{Q}$ as follows:
 \[
L(\bar{Q})(C_o(t),O(t))=-\{Y(t)\log \bar{Q}(C_o(t),A(t))+(1-Y(t))\log(1-\bar{Q}(C_o(t),A(t)))\}.
\]

\noindent
Let  $\bar{Q}_N$ be the initial estimator of $\bar{Q}_0$. Similarly, if  $\bar{g}_0$ is not known, we define $\bar{g}_N$ as the initial estimator of $\bar{g}_0$. If it is known, then $\bar{g}_N$ below just denotes the true $\bar{g}_0$. Otherwise, we let $g_{0,t}$ be known but not stationary in time $t$. Given $\bar{Q}_N,\bar{g}_N$, we define a parametric working model $\{\bar{Q}_{N,\epsilon}:\epsilon\}$ with finite-dimensional parameter $\epsilon$ so that $\epsilon = 0$ denotes $\bar{Q}_N$.  In particular, we define a parametric family of fluctuations of the initial estimator with fluctuation parameter $\epsilon$, along with an appropriate loss function defined above, so that the linear combination of the components of the derivate of the loss evaluated at $\epsilon = 0$ span the efficient influence curve at $\theta_N=(\bar{Q}_N,\bar{g}_N)$.  Given the initial estimator $\theta_N$ of $\theta_0$, we compute the maximum likelihood estimator of $\epsilon$, given by:
\[
\epsilon_N=\arg\min_{\epsilon} \frac{1}{N}\sum_{t=1}^N L(\bar{Q}_{N,\epsilon})(C_o(t),O(t)).
\] 
We will use the logistic fluctuation model  $\mbox{Logit} \bar{Q}_{N,\epsilon}=\mbox{Logit}\bar{Q}_N+\epsilon H(\bar{g}_N)$, where the clever covariate is defined as:
\[
H(\bar{g}_N)=\frac{I(A(t)=d(C_o(t))}{\bar{g}_N(A(t)\mid C_o(t))}.
\]
This is an universal least favorable submodel. The TMLE of $\bar{Q}_0$ is given by $\bar{Q}_N^*=\bar{Q}_{N,\epsilon_n}$, and it solves the efficient score equation:
\[
\frac{1}{N} \sum_{t=1}^N D^*_{C_o(t)}(\bar{Q}_N^*,\bar{g}_0)(O(t)) = 0.
\]

\subsection{Analysis of the TMLE}

Recall the definition of our target parameter as the average of $C_o(t)$-causal effects, where 
\[
\bar{\Psi}(\bar{Q})=\frac{1}{N} \sum_{t=1}^N \Psi_{C_o(t)}(\bar{Q}) = \frac{1}{N} \sum_{t=1}^N E_{P_{C_o(t)}}(Y(t) \mid C_o(t),A(t)=d(C_o(t))).
\] 

\noindent
We will consider the case that the treatment mechanism $\bar{g}_0$ is known. Consider the following conditions for the next theorem.
\vspace{2mm}
\noindent
\begin{enumerate}
\item Define ${\cal F}$ to be a class of multivariate, real valued cadlag functions on an Euclidean cube $[0,\tau]$ containing ${\cal C}\times {\cal O}$ with sectional variation norm $\pl f\pl_v^*$, $f \in \cal F$, bounded by an universal constant $M<\infty$. Assume $\{(C_o,o)\rightarrow D^*_{C_o(t)}(\bar{Q},\bar{g}_0)(o):\bar{Q}\in {\cal Q}\}\subset{\cal F}$. 

\item Assume $\frac{1}{N} \sum_{t=1}^N P_{\theta_0,C_o(t)}\{D_{C_o(t)}^*(\bar{Q}_{N}^*,\bar{g}_0)-D_{C_o(t)}^*(\bar{Q}^*,\bar{g}_0)\}^2\rightarrow_p 0$ as $N \rightarrow \infty$ for some possibly misspecified limit $\bar{Q}^*$.

\item Assume $\frac{1}{N}\sum_t {P}_{\theta_0,C_o(t)}D^*_{C_o(t)}(\bar{Q}^*,\bar{g}_0)^2$ converges to a fixed $\sigma^2_0$.

\end{enumerate}

\noindent
\begin{theorem}[Adaptive design learning the optimal treatment rule TMLE] \label{thadaptoit}\ 
Consider the case where $\bar{g}_0$ as known, and let $\bar{Q}_N^*$ be the one-step TMLE so that $$\sum_t D^*(\bar{Q}_N^*,\bar{g}_0)(C_o(t),O(t)) = 0.$$ If the above conditions C1,C2 and C3 hold, then:
\vspace{3mm}
\[
\sqrt{N}(\bar{\Psi}(\bar{Q}_N^*)-\bar{\Psi}(\bar{Q}_0))\Rightarrow N(0,\sigma^2_0),
\] 
\vspace{3mm}
\noindent
where  $\sigma^2_0$ can be consistently estimated with $\sigma^2_N=\frac{1}{N} \sum_{t=1}^N \{ D^*_{C_o(t)}(\bar{Q}^*_N,\bar{g}_0)\}^2$.
\end{theorem}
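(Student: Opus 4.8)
The plan is to treat this as a direct specialization of Theorem~\ref{thgen}, exploiting the fact that the treatment mechanism $\bar{g}_0$ is known. The headline observation, which is exactly what collapses the four conditions of Theorem~\ref{thgen} down to the three listed here, is that the exact second-order remainder $R_{2,C_o(t)}(\bar{Q},\bar{g}_0,\bar{Q}_0,\bar{g}_0)$ vanishes identically for every $\bar{Q}$ once $\bar{g}_0$ is fixed at the true, experimenter-controlled mechanism. Consequently there is no remainder to approximate by a martingale, the auxiliary function $f_1$ of condition C3 in the general theorem is absent, and the influence function reduces to $\bar{f}=D^*_{C_o(t)}(\bar{Q}^*,\bar{g}_0)$ alone.

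To verify this, I would first write the $C_o(t)$-specific exact expansion
\[
\Psi_{C_o(t)}(\bar{Q})-\Psi_{C_o(t)}(\bar{Q}_0)=-P_{\theta_0,C_o(t)}D^*_{C_o(t)}(\bar{Q})+R_{2,C_o(t)}(\bar{Q},\bar{Q}_0),
\]
using $\Psi_{C_o(t)}(\bar{Q})=\bar{Q}(C_o(t),d(C_o(t)))$ and the efficient influence curve (\ref{efficadapt}). A direct conditional-expectation computation, in which the indicator $I(A(t)=d(C_o(t)))$ selects the arm $A(t)=d$ and the known weight $1/\bar{g}_0(d\mid C_o(t))$ cancels $\bar{g}_0(d\mid C_o(t))$, gives $P_{\theta_0,C_o(t)}D^*_{C_o(t)}(\bar{Q})=\bar{Q}_0(C_o(t),d)-\bar{Q}(C_o(t),d)$. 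Substituting this into the definition of $R_{2,C_o(t)}$ shows the two $\bar{Q}$-versus-$\bar{Q}_0$ differences telescope to zero. Averaging the expansion over $t$ and invoking the efficient score equation $\frac{1}{N}\sum_t D^*(\bar{Q}_N^*,\bar{g}_0)(C_o(t),O(t))=0$ then yields the exact identity
\[
\bar{\Psi}(\bar{Q}_N^*)-\bar{\Psi}(\bar{Q}_0)=\frac{1}{N}\sum_t\left\{D^*(\bar{Q}_N^*,\bar{g}_0)(C_o(t),O(t))-P_{\theta_0,C_o(t)}D^*(\bar{Q}_N^*,\bar{g}_0)\right\},
\]
so that the standardized estimator is exactly a martingale sum $M_N(D^*(\bar{Q}_N^*,\bar{g}_0))$ evaluated at the data-dependent index $\bar{Q}_N^*$, with no second-order leftover.

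The remaining two steps mirror the proof of Theorem~\ref{thgen}. Using C1 (the entropy/sectional-variation-norm condition placing the class $\{D^*_{C_o(t)}(\bar{Q},\bar{g}_0):\bar{Q}\}$ in a Donsker-type set) together with the $L^2$-consistency condition C2, Lemma~\ref{lemmaasympequi} gives asymptotic equicontinuity of $M_N$, i.e. $M_N(D^*(\bar{Q}_N^*,\bar{g}_0))-M_N(D^*(\bar{Q}^*,\bar{g}_0))=o_P(N^{-1/2})$. Replacing the data-dependent index by its limit $\bar{Q}^*$ leaves a genuine martingale sum with fixed integrand, whose predictable quadratic variation is the limit of $\frac{1}{N}\sum_t P_{\theta_0,C_o(t)}\{D^*_{C_o(t)}(\bar{Q}^*,\bar{g}_0)\}^2$ up to a centering correction; condition C3 identifies this limit with $\sigma^2_0$. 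The multivariate martingale central limit theorem, as invoked in Theorem~\ref{thgen}, then delivers $\sqrt{N}(\bar{\Psi}(\bar{Q}_N^*)-\bar{\Psi}(\bar{Q}_0))\Rightarrow N(0,\sigma^2_0)$, and consistency of $\sigma^2_N$ for $\sigma^2_0$ follows from C2 together with C3.

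I expect the main obstacle to be the clean verification that the remainder is exactly zero rather than merely second order, and in particular making transparent that this hinges on $\bar{g}_0$ being both known and equal to the true data-generating mechanism under the adaptive design: the time-varying, past-dependent assignment probabilities are permissible precisely because they are controlled by the experimenter and are measurable with respect to $C_o(t)$, so conditioning on $C_o(t)$ renders both the rule $d(C_o(t))$ and the weights fixed, preserving the martingale structure of $M_N$. A secondary subtlety is the variance: since $D^*_{C_o(t)}(\bar{Q}^*,\bar{g}_0)$ has conditional mean zero under $P_{\theta_0,C_o(t)}$ only when $\bar{Q}^*=\bar{Q}_0$, the predictable quadratic variation equals the limit of the second moment $\frac{1}{N}\sum_t P_{\theta_0,C_o(t)}\{D^*_{C_o(t)}(\bar{Q}^*,\bar{g}_0)\}^2$ exactly under consistency of $\bar{Q}_N^*$, and otherwise up to the subtracted squared-bias term; the second-moment form of C3 is what makes the stated variance identification and its empirical estimator $\sigma^2_N$ go through.
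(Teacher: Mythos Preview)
Your proposal is correct and follows essentially the same route as the paper: both exploit that with $\bar{g}_0$ known the second-order remainder vanishes exactly (the paper invokes the double-robust form $R_{2,C_o(t)}=\frac{\bar{g}-\bar{g}_0}{\bar{g}}(\bar{Q}-\bar{Q}_0)$, you compute $P_{\theta_0,C_o(t)}D^*_{C_o(t)}(\bar{Q})$ directly), then reduce to a pure martingale sum, apply Lemma~\ref{lemmaasympequi} via C1--C2 to pass to the fixed limit $\bar{Q}^*$, and finish with the martingale CLT under C3. Your additional remarks on the variance estimator and on why the adaptive, $C_o(t)$-measurable design preserves the martingale structure are accurate elaborations that the paper leaves implicit.
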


\vspace{3mm}
\begin{proof}

Recall that
\[
R_{2,C_o(t)}(\bar{Q},\bar{g},\bar{Q}_0,\bar{g}_0)=\frac{\bar{g}-\bar{g}_0}{\bar{g}}(d(C_o(t))\mid C_o(t))(\bar{Q}-\bar{Q}_0)(C_o(t),d(C_o(t)).
\]
Since $\bar{g}_0$ is known, this second order remainder equals zero, and we have the following second order expansion:
\[
\frac{1}{N}\sum_t\Psi_{C_o(t)}(\bar{Q})-\frac{1}{N}\sum_t \Psi_{C_o(t)}(\bar{Q}_0)= \frac{1}{N}\sum_t (D^*_{C_o(t)}(\bar{Q}_N^*,\bar{g}_0) - P_{\theta_0,C_o(t)}) D^*_{C_o(t)}(\bar{Q}_N^*,\bar{g}_0). 
\]
By Lemma \ref{lemmaasympequi}, the martingale process $(M_N(f) : f \in {\cal F})$ defined by
\[
M_N(f)= \frac{1}{N} \sum_{t=1}^N \{f(C_o(t),O(t)) - P_{\theta_0,C_o(t)} f \}
\]
is asymptotically equicontinuous. 
By condition C2, it now follows that $M_N(D^*_{C_o(t)}(\bar{Q}_{N}^*,\bar{g}_0))-M_N(D^*_{C_o(t)}(\bar{Q}^*,\bar{g}_0))=o_P(N^{-1/2})$. Therefore, the second order expansion becomes
\begin{eqnarray*}
\bar{\Psi}(\bar{Q}_N^*) - \bar{\Psi}(\bar{Q}_0) &=&
\frac{1}{N} \sum_{t=1}^N  \left\{ D^*_{C_o(t)}(\bar{Q}^*,\bar{g}_0)(O(t)) - P_{\theta_0,C_o(t)} D^*_{C_o(t)}(\bar{Q}^*,\bar{g}_0) \right\}\\
&&+o_P(N^{-1/2}).
\end{eqnarray*}

\vspace{2mm}
\noindent
By C3, $\frac{1}{N} \sum_{t=1}^N {P}_{\theta_0,C_o(t)} D^*_{C_o(t)}(\bar{Q}^*)^2$ converges to a fixed $\sigma^2_0$, where  $\sigma^2_0$ can be consistently estimated with $\sigma^2_N=\frac{1}{N}\sum_t\{ D^*_{C_o(t)}(\bar{Q}^*_N)\}^2$. By the martingale central limit theorem if follows that
\[
N^{1/2} (\bar{\Psi}(\bar{Q}_N^*) - \bar{\Psi}(\bar{Q}_0)) \Rightarrow N(0,\sigma^2_0),
\]
which proves our result.
\end{proof}

\section{Simulation Study}\label{sect6}

The next subsection, 7.1, describes a simulation study for evaluating the TMLE of average over time of context-specific means for a single time point intervention for several different data-generating distributions. The second subsection (7.2) describes an adaptive trial and a simulation study evaluating the TMLE of the average over time of a rule-specific mean outcome, where this rule represents the best estimate of the optimal rule at that time point. All of the results generated, as well as a software implementation for both the TMLE of average over time of context-specific means for a single time point intervention and adaptive sequential learning the optimal individualized treatment rule parameters are freely available online \cite{malenica2017tstmle}. 

\subsection{Simulation 1: Average over time of context-specific causal effects of single time point intervention}

\noindent
In this section we present results demonstrating the theoretical properties of the methodology presented in Section \ref{sect3}. In particular, we focus on the average over time of $C_o(t)$-specific causal effects of a single time-point intervention on the subsequent outcome. Consider the data structure as defined in Section \ref{sect3}, with $O(t) = (A(t),Y(t),W(t))$ for $t=1,\dots, N$. We explore several different settings that might be of relevance considering actual time-series data. For simplicity we omit missingness and censoring, but note that such settings can be easily incorporated in out treatment variable $A$. In the following simulations we consider binary outcome and treatment, but note that the results will be comparable for continuous outcome. Unless specified otherwise, all results are generated based on 500 Monte Carlo draws used to evaluate the performance of the TMLE estimator of the average over time context-specific causal effect of a single time intervention. We remind that the target parameter of interest is the context-specific average treatment effect, with average being taken over time. In particular, we are interested in the following parameter: 
\[
\Psi^N(P^N) = \frac{1}{N} \sum_{t=1}^N \left\{ \mathbb{E}(Y(t) | A(t) = 1, c_o(t)) - \mathbb{E}(Y(t) | A(t) = 0, c_o(t))\right \} 
\]
with $c_o(t)$ denoting the realized $t$-specific $C_o(t)$. 

\vspace{4mm}
\noindent
\textbf{Simulation 1a (simple dependence)}

\vspace{2mm}
\noindent
We explore a scenario with binary treatment ($A(t) \in \{0,1\}$) and outcome ($Y(t) \in \{0,1\}$) first, with simple dependence extending to Markov order 2. We observe covariates $W_1(t)$, $W_2(t)$ and $W_3(t)$ for each $t=1,\dots, N$, with $W_1(t)$ and $W_3(t)$ drawn from a bernoulli distribution and $W_2(t)$ from a discrete uniform distribution. We note that for this scenario, $W(t) = (W_1(t), W_2(t))$ are drawn independently with respect to the observed past  $\bar{O}(t)$. Further, let the treatment variable $A(t)$ be a function of the past up until $t-2$ and depend on $W_1(t-1),W_2(t-1), Y(t-1), A(t-1)$ and $W_3(t-2)$. The outcome variable $Y(t)$ exhibits dependence of order 2, as a function of $W_1(t-1), W_2(t-1), W_3(t-1), A(t), W_1(t-2)$ and $W_3(t-2)$. For notational convenience, we define $O(1:t)$ as $(O(1), \cdots O(t))$. The exact data-generating distribution used is as follows:

\noindent
\begin{small}
\begin{align*}
A(0:4) &\sim Bern(0.5) \\
Y(0:4) &\sim Bern(0.5) \\
W_1 (0:4) &\sim Bern(0.5) \\
W_2 (0:4) &\sim Unif(1,3) \\
W_3 (0:4) &\sim Bern(0.5) \\
A(4:n) &\sim Bern(expit(0.25*W_1(t-1) - 0.2*W_2(t-1) \\
&\phantom{{}=20} + 0.3*Y(t-1) - 0.2*A(t-1) \\
&\phantom{{}=20} + 0.2*W_3(t-2)))\\
Y(4:t) &\sim Bern(expit(0.3 - 0.8*W_1(t-1) \\
&\phantom{{}=20} + 0.1*W_2(t-1) + 0.2*W_3(t-1) \\
&\phantom{{}=20} + A(t) - 0.5*W_1(t-2) \\
&\phantom{{}=20} + 0.2*W_3(t-2) \\
W_1 (4:n) &\sim Bern(0.5) \\
W_2 (4:n) &\sim Unif(1,3) \\
W_3 (4:n) &\sim Bern(0.5).
\end{align*}
\end{small}

\noindent
The initial estimates $\bar{g}_N,\bar{Q}_N$ were obtained using the online version of the Super-Learner algorithm. In particular, our initial ensemble consisted of multiple algorithms, including simple generalized linear models, penalized regressions and extreme gradient boosting \cite{coyle2018sl3}.  For cross-validation, we relied on the online cross-validation scheme, also known as the recursive scheme in the time-series literature.  We report Wald-type confidence intervals, with the asymptotic  variance estimated as: $$\frac{1}{N}\sum_t \{D^*_{C_o(t)}(\theta_N^*)(O(t))\}^2.$$ In particular, we report the coverage of the resulting asymptotic 95$\%$ confidence intervals to evaluate the performance of the proposed method in Table 1. 

\vspace{4mm}
\noindent
\textbf{Simulation 1b (more elaborate dependence)}

\vspace{2mm}
\noindent
Next, we explore the setting where the single time-series exhibits a more elaborate dependence, while keeping the $n$ as defined in Simulation 1a. Effectively, we are decreasing the sample size and therefore testing the performance of our estimator for different finite sample settings, including the most extreme case of $n=100$. In addition, we consider each part of $O(t)$ to exhibit different levels of dependence, including all the covariates in $W(t)$. For this particular simulation, we treat $A(t)$ as randomized, with a simulation mimicking an observational study considered in Simulation 1c. 

\noindent
\begin{small}
\begin{align*}
A(0:7) &\sim Bern(0.5) \\
Y(0:7) &\sim Bern(0.5) \\
W_1 (0:7) &\sim Bern(0.5) \\
W_2 (0:7) &\sim Normal(0,1) \\
A(7:n) &\sim Bern(0.5) \\
Y(7:n) &\sim Bern(expit(1.5*A(t) - A(t-1) \\
&\phantom{{}=20} + 0.5*Y(t-1) -1.1*W_1(t-1) \\
&\phantom{{}=20} + 0.7*Y(t-3) - A(t-5) + W_1(t-7)) \\
W_1 (7:n) &\sim Bern(expit(0.5*W_1(t-1) - 0.5*Y(t-1) + 0.1*W_2(t-1)) \\
W_2 (7:n) &\sim Normal(0.6*A(t-1) + Y(t-1) -W_1(t-1), sd=1) .
\end{align*}
\end{small}

\vspace{4mm}
\noindent
\textbf{Simulation 1c (Observational study, more elaborate functions and dependence)}

\vspace{2mm}
\noindent
Finally, we consider a typical observational study setup with varying level of dependence and variable interactions. In particular, Simulation 1c considers a setting where each part of the likelihood exhibits some level of dependence, including all of the covariates grouped in $W(t)$. As in Simulation 1a and 1b, we keep $n$ at constant levels $n=(100, 500, 1000)$, and report performance of our estimator for very low effective sample size ($n=100$). We include the highly adaptive lasso (HAL) as part of our Super Learner library, in addition to several glms, penalized regressions and extreme gradient boosting. In addition, we test the double robustness property of our estimator for all sample sizes considered previously, $n=(100, 500, 1000)$. 
The exact data-generating distribution used is as follows:

\noindent
\begin{small}
\begin{align*}
A(0:6) &\sim Bern(0.5) \\
Y(0:6) &\sim Bern(0.5) \\
W_1 (0:6) &\sim Bern(0.5) \\
W_2 (0:6) &\sim Normal(0,1) \\
A(6:n) &\sim Bern(expit(0.7*W_1(t-2) - 0.3*A(t-1) \\
&\phantom{{}=20} + 0.2*sin(W_2(t-2)*A(t-3)) \\
Y(6:n) &\sim Bern(expit(1.5*A(t) - (W_1(t-1)*A(t-2))^2  \\
&\phantom{{}=20} + 0.9* sin(W_2(t-4))*A(t-3)*cos(W_2(t-6)) \\
&\phantom{{}=20} - abs(W_2(t-5)>0))\\
W_1 (6:n) &\sim Bern(expit(0.5*W_1(t-1) - 0.5*Y(t-1) + 0.1*W_2(t-1)) \\
W_2 (6:n) &\sim Normal(0.6*A(t-1) + Y(t-1) - W_1(t-1), sd=1).
\end{align*}
\end{small}

\newpage
\noindent
\begin{table*}[hp!] \centering
\begin{small}
\begin{tabular}{@{}lrrrr@{}}\toprule
& \textbf{n} &\textbf{Bias} & \textbf{Variance} & \textbf{Coverage} \\ \midrule

\textbf{Single time-point intervention (1a)} & 1000 & -2.37e-3 & 9.02e-4 & 94.8\\ \hdashline
& 500 & 2.02e-3 & 1.71e-3 & 96.2\\ \hdashline
& 100 & 5.02e-3 & 1.02e-2 & 92.0\\ \hline

\textbf{Single time-point intervention (1b)} & 1000 & -7.09e-4 & 7.58e-4 & 94.0\\ \hdashline
& 500 & 1.16e-2 & 2.07e-3 & 89.6\\ \hdashline
& 100 & 1.73e-2 & 1.30e-2 & 77.4\\ \hline

\textbf{Single time-point intervention (1c)} & 1000 & 4.79e-3 & 9.45e-4 & 91.2\\ \hdashline
\textbf{} & 500 & 7.52e-3 & 1.92e-3 & 93.8\\ \hdashline
\textbf{}& 100 & 3.71e-3 & 1.25e-2 & 81.8\\  \hline

\bottomrule
\end{tabular}
\end{small}
\caption{Bias, variance and 95$\%$ coverage of the TMLE of the average over time context-specific causal effects with a single time-point intervention for Simulations 1a, 1b and 1c at sample sizes $n=1000$, $n=500$ and $n=100$, over 500 Monte Carlo draws.}
\end{table*}

\noindent
\begin{table*}[hp!] \centering
\begin{small}
\begin{tabular}{@{}lrrrr@{}}\toprule
& \textbf{n} &\textbf{Bias} & \textbf{Variance} & \textbf{Coverage} \\ \midrule

\textbf{Qmgc } & 1000 & 1.43e-2 & 1.26e-3 & 88.4\\ \hdashline
\textbf{Qcgm } & 1000 & 1.42e-2 & 1.25e-3 & 88.4\\ \hline

\textbf{Qmgc } & 500 & 1.29e-2 & 2.63e-3 & 89.2\\ \hdashline
\textbf{Qcgm } & 500 & 1.30e-2 & 2.62e-3 & 89.4\\  \hline

\textbf{Qmgc } & 100 & 3.68e-2 & 1.47e-2 & 84.4\\ \hdashline
\textbf{Qcgm } & 100 & -2.62e-2 & 9.78e-3 & 85.8\\ \hline

\bottomrule
\end{tabular}
\end{small}
\caption{Illustration of the double robustness property of our estimator for Simulation 1c with misspecified (m) and correctly specified (c) models for $\bar{g}_N$ and $\bar{Q}_N$ at sample sizes $n=(1000,500,100)$ over 500 Monte Carlo draws.}
\end{table*}

\subsection{Simulation 2: Adaptive design learning the optimal treatment rule}

\noindent
In this section we present results relevant to methodology described in Section \ref{sect5}, concerning the adaptive learning of the optimal individualized treatment rule. We note that the focus of the simulations presented is estimating the optimal rule with machine learning approaches. We consider the same data structure as in Simulation 1, with $O(t) = (A(t),Y(t),W(t))$ for $t=1,\dots, N$, and omit missingness and censoring. We focus on several different data generating mechanisms, and explore performance of our estimator with different initial sample sizes and consequent sequential updates. As before, we consider binary outcome and treatment, but note that the results will be comparable for continuous bounded outcome. Finally, unless specified otherwise, we present coverage of the mean under the current estimate of the optimal individualized treatment rule at each update based on 500 Monte Carlo draws. For each simulation, we set the reference treatment mechanism to a balanced mechanism assigning treatment with probability $P(A(t) = 1) = 0.5$ for the data draw used to learn the initial estimate of the optimal individualized treatment rule.

\vspace{2mm}
\noindent
For small number of time points, $d_{\bar{Q}_{t-1}}$ might not be a good estimate of $d_{\bar{Q}_0}$. As such, assigning the current conditional probability of treatment based on the fixed dimensional summary measure deterministically based on the estimated rule could be ill-advised. In light of that, we define $\{t_n\}_{t \geq 1}$ and $\{e_n\}_{t \geq 1}$ as user-supplied, non-increasing sequences with $t_1 \leq 0.5$, $t_{\infty} > 0$ and $e_{\infty} > 0$. For every $t \geq 1$, we could have the following function $G_n$ over $[-1,1]$ as defined in \cite{chambaz2017tsoit}:
$$G_n(x) = t_n  \text{I}[x \leq -e_n] + (1-t_n) \text{I}[x \geq e_n] + (-\frac{1/2-t_n}{2 e_n^3}x^3 + \frac{1/2-t_n}{2 e_n/3}x + \frac{1}{2}) \text{I}[ - e_n \leq x \leq e_n]$$ 
$G_n$ is used to derive a stochastic treatment rule from an estimated blip function, as a smooth approximation to $x \rightarrow \text{I}[x \geq 0]$ bounded away from 0 and 1, therefore mimicking the optimal treatment rule as an indicator of the true blip function. In particular, we note that $\{e_n\}_{t \geq 1}$ defines the level of random perturbation around the current estimate $d_{\bar{Q}_{t-1}}$ of the optimal rule. Similarly, choosing $t_1 = \cdots = t_n = 0.5$ would yield a balanced stochastic treatment rule. 

\vspace{4mm}
\noindent
\textbf{Simulation 2a (simple dependence)}

\vspace{2mm}
\noindent
As in Simulation 1a, we explore a simple dependence setting first (Markov order 2) with binary treatment ($A(t) \in \{0,1\}$) and outcome ($Y(t) \in \{0,1\}$). The time-varying covariate $W(t)$ decomposes as $W(t) \equiv (W_1(t), W_2(t))$ with binary $W_1$ and continuous $W_2$. The outcome $Y$ at time $t$ is conditionally drawn given $\{A(t), Y(t-1), W_1(t-1)\}$ from a Bernoulli distribution, with success probability defined as $1.5*A(t) + 0.5 * Y(i-1) - 1.1*W_1(i-1)$. We note as in Section  \ref{sect5} that the conditional mean outcome $\bar{Q}_0(A(t),C_o(t))$  defines the true $C_o(t)$-specific treatment effect and thereby the optimal rule $d_0(C_o(t))$ for assigning treatment $A(t)$. We set the reference treatment mechanism to a balanced treatment mechanism assigning treatment with probability $P(A(t) = 1) = 0.5$. In particular, we generate the initial sample of size $t=1000$ and $t=500$  by first drawing a set of four $O(t)$ samples randomly from binomial and normal distributions in order to have a starting point to initiate time dependence. After these first 4 draws $O(1),O(2),O(3),O(4)$, we draw $A(t)$ from a binomial distribution with success probability 0.5, $Y(t)$ from a Bernoulli distribution with success probability dependent on $\{A(t), A(t-1), Y(t-1), W_2(t-1)\}$, followed by $W_1(t)$ conditional on $\{Y(t-1),W_1(t-1),W_2(t-1)\}$ and $W_2(t)$ conditional on $\{A(t-1), Y(t-1), W_1(t-1)\}$.  After $t=1000$ or $t=500$, we continue to draw $O(t)$ as above, but with $A(t)$ drawn from a stochastic intervention approximating the current estimate $d_{\bar{Q}_{t-1}}$ of the optimal rule $d_{\bar{Q}_0}$. This procedure is repeated until reaching a specified final time point indicating the end of a trial. Our estimator of $\bar{Q}_{0}$, and thereby the optimal rule $d_0$, is based on an online super-learner with an ensemble consisting of multiple algorithms, including simple generalized linear models, penalized regressions and extreme gradient boosting \cite{coyle2018sl3}.  For cross-validation, we relied on the online cross-validation scheme, also known as the recursive scheme in the time-series literature. The sequences $\{t_n\}_{t \geq 1}$ and $\{e_n\}_{t \geq 1}$ are chosen constant, with $t_{\infty} = 10\%$ and $e_{\infty} = 5\%$. The TMLEs are computed at  sample sizes  a multiple of 200, and no more than 1800 (for initial $t=1000$) or 1300 (for initial $t=500$), at which point sampling is stopped.  As in previous subsection, we use the coverage of asymptotic 95$\%$ confidence intervals to evaluate the performance of the TMLE in estimating the average across time $t$ of the $d_{\bar{Q}_{t-1}}$-specific mean outcome. The exact data-generating distribution used is as follows:

\noindent
\begin{small}
\begin{align*}
A(0:4) &\sim Bern(0.5) \\
Y(0:4) &\sim Bern(0.5) \\
W_1 (0:4) &\sim Bern(0.5) \\
W_2 (0:4) &\sim Normal(0,1) \\
A(4:t) &\sim Bern(0.5) \\
Y(4:t) &\sim Bern(expit(1.5*A(i) + 0.5*Y(i-1) - 1.1*W_1(i-1) )) \\
W_1 (4:t) &\sim Bern(expit(0.5*W_1(i-1) - 0.5*Y(i-1) + 0.1*W_2(i-1))) \\
W_2 (4:t) &\sim Normal(0.6*A(i-1) + Y(i-1) - W_1(i-1), sd=1) \\
A(t:1800) &\sim d_{\bar{Q}_{t-1}} \\
Y(t:1800) &\sim Bern(expit(1.5*A(i) + 0.5*Y(i-1) - 1.1*W_1(i-1) )) \\
W_1 (t:1800) &\sim Bern(expit(0.5*W_1(i-1) - 0.5*Y(i-1) + 0.1*W_2(i-1))) \\
W_2 (t:1800) &\sim Normal(0.6*A(i-1) + Y(i-1) - W_1(i-1), sd=1).
\end{align*}
\end{small}

\noindent
\textbf{Simulation 2b (more elaborate dependence)}
In Simulation 2b, we explore the behavior of our estimator in cases of more elaborate dependence. As in Simulation 1a, we only consider binary treatment ($A(t) \in \{0,1\}$) and outcome ($Y(t) \in \{0,1\}$), with binary and continuous time-varying covariates. We set the reference treatment mechanism to a balanced treatment mechanism assigning treatment with probability $P(A(t) = 1) = 0.5$, and generate the initial sample of size $t=(1000, 500)$ by sequentially drawing $W_1(t), W_2(t), A(t), Y(t)$ taking into account the appropriate dependence structure specified by the data-generating mechanism. As before, upon the first $t=1000$ or $t=500$ time-points, we continue to draw $O(t)$ with $A(t)$ drawn from a stochastic intervention approximating the current estimate $d_{\bar{Q}_{t-1}}$ of the optimal rule $d_{\bar{Q}_0}$. The estimator of the optimal rule  $d_{\bar{Q}_0}$ was based on an ensemble of machine learning algorithms and regression-based algorithms, with honest risk estimate achieved by utilizing online cross-validation scheme with validation set size of 30. The sequences $\{t_n\}_{t \geq 1}$ and $\{e_n\}_{t \geq 1}$ were set to $10\%$ and $ 5\%$, respectively. The TMLEs are computed at initial $t=1000$ or $t=500$, and consequently at sample sizes being a multiple of 200, and no more than 1800 (or 1300), at which point sampling is stopped. The exact data-generating distribution used is as follows: 

\noindent
\begin{small}
\begin{align*}
A(0:4) &\sim Bern(0.5) \\
Y(0:4) &\sim Bern(0.5) \\
W_1 (0:4) &\sim Bern(0.5) \\
W_2 (0:4) &\sim Normal(0,1) \\
A(4:t) &\sim Bern(0.5) \\
Y(4:t) &\sim Bern(expit(1.5*A(i) + 0.5*Y(i-3) - 1.1*W_1(i-4) )) \\
W_1 (4:t) &\sim Bern(expit(0.5*W_1(i-1) - 0.5*Y(i-1) + 0.1*W_2(i-2))) \\
W_2 (4:t) &\sim Normal(0.6*A(i-1) + Y(i-1) - W_1(i-2), sd=1) \\
A(t:1800) &\sim d_{\bar{Q}_{t-1}} \\
Y(t:1800) &\sim Bern(expit(1.5*A(i) + 0.5*Y(i-3) - 1.1*W_1(i-4) )) \\
W_1 (t:1800) &\sim Bern(expit(0.5*W_1(i-1) - 0.5*Y(i-1) + 0.1*W_2(i-2))) \\
W_2 (t:1800) &\sim Normal(0.6*A(i-1) + Y(i-1) - W_1(i-2), sd=1).
\end{align*}
\end{small}

\begin{table}[htb] \centering
\begin{small}
\begin{tabular}{@{}lrrrrrr@{}}\toprule
& \textbf{$t$} & $\textbf{Cov}_{t}$ & $\textbf{Cov}_{t_1}$ & $\textbf{Cov}_{t_2}$ & $\textbf{Cov}_{t_3}$  & $\textbf{Cov}_{t_4}$\\ \midrule

\textbf{Adaptive Learning the OIT rule (2a)} & 1000  & 90.00 & 93.20 & 93.80 & 94.80 & 94.60\\  \hdashline
\textbf{Adaptive Learning the OIT rule (2a)} & 500  & 92.60 & 94.00 & 95.20 & 95.40 & 95.80\\  \hline

\textbf{Adaptive Learning the OIT rule (2b)} & 1000  & 92.60 & 92.60 & 93.00 & 93.40 & 93.80 \\  \hdashline
\textbf{Adaptive Learning the OIT rule (2b)} & 500  & 89.60 & 90.20 & 89.60 & 90.20 & 89.40\\  \hline

\bottomrule
\end{tabular}
\end{small}

\caption{Design involves adaptive learning of the optimal individualized treatment rule for a single individual, using the online Super Learner with recursive cross-validation scheme to estimate the optimal treatment rule. The first $t$ time points generates $A(t)$ with probability 0.5. TMLEs are computed at $t = \{500, 1000\}$, $t_1 = t+200$, $t_2 = t+400$, $t_3 = t+600$ and $t_4 = t+800$, with sequential updates being of size 200. The sequences $\{t_n\}_{t \geq 1}$ and $\{e_n\}_{t \geq 1}$ are chosen constant, with $t_{\infty} = 10\%$ and $e_{\infty} = 5\%$.  The table above demonstrates the 95$\%$ coverage for the average across time of the counterfactual mean outcome under the current estimate of the optimal dynamic treatment at that time point, over 500 Monte-Carlo draws for Simulations 2a and 2b with initial sample sizes 1000 and 500.}
\end{table}

\begin{figure}[htb]
\centering
     \includegraphics[height=7.0cm, width=13.0cm]{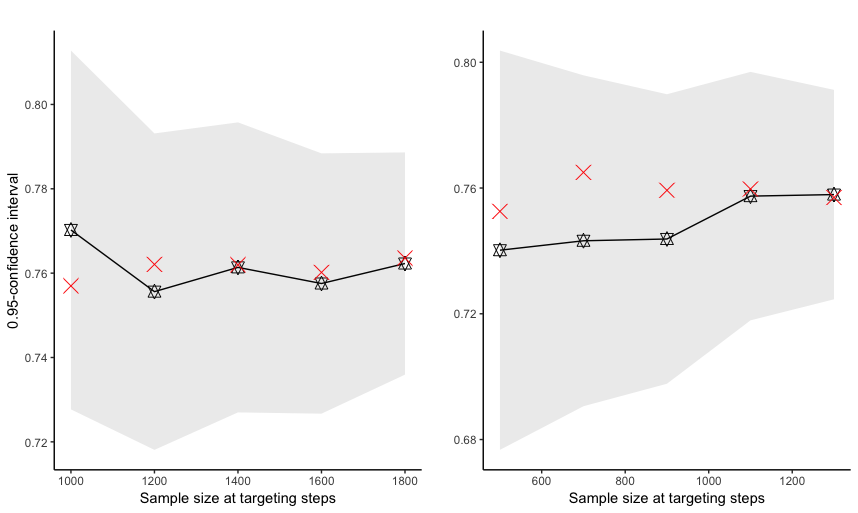}
    \caption{Illustration of the data-adaptive inference of the mean reward under the optimal treatment rule with initial sample size $n=1000$ and $n=500$ for Simulation 2a. The red crosses represent the successive values of the data-adaptive true parameter, with stars representing the estimated parameter with the corresponding $95\%$ confidence interval for the data-adaptive parameter.}
    \label{fig1}
\end{figure}

\begin{figure}[htb]
\centering
        \includegraphics[height=7.0cm, width=13.0cm]{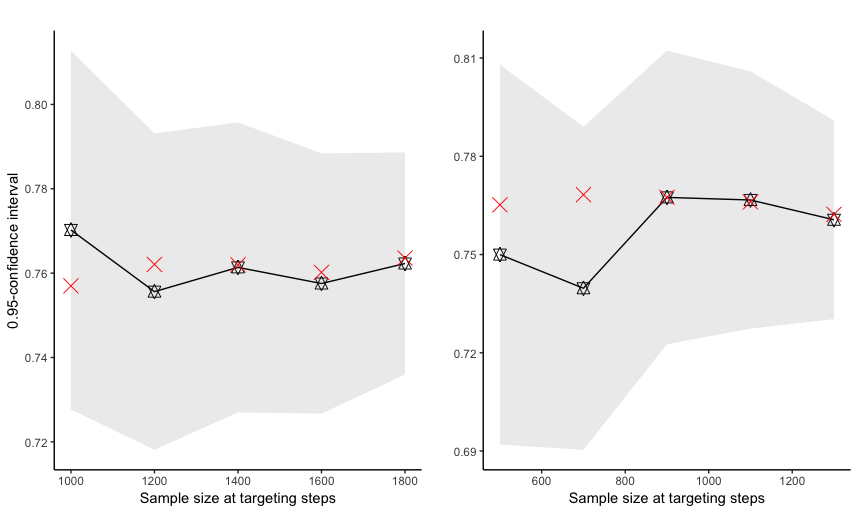}
    \caption{Illustration of the data-adaptive inference of the mean reward under the optimal treatment rule with initial sample size $n=1000$ and $n=500$ for Simulation 2b. The red crosses represent the successive values of the data-adaptive true parameter, with stars representing the estimated parameter with the corresponding $95\%$ confidence interval for the data-adaptive parameter.}
    \label{fig2}
\end{figure}

\section{Discussion}\label{sect7}
In this manuscript, we consider causal inference based on observing a single time series with asymptotic results derived over time $t$. The data setup constitutes a typical longitudinal data structure, where within each $t$-specific time-block one observes treatment and outcome nodes, and possibly time-dependent covariates in-between treatment nodes. Each $t$-specific data record $O(t)$ is viewed as its own experiment in the context of the observed history $C_o(t)$, carrying information about a causal effect of the treatment nodes on the outcome node. A key assumption necessary in order to obtain the presented results is that the relevant history for generating $O(t)$, given the past $\bar{O}(t-1)$, can be summarized by a fixed dimensional summary $C_o(t)$. We note that our conditions allow for $C_o(t)$ to be a function of the whole observed past, allowing us to avoid Markov-order type assumptions that limit dependence on recent past.  Components of $C_o(t)$ that depend on the whole past, such as an estimate of the optimal treatment rule based on $O(1),\ldots,O(t-1)$ will typically converge to a fixed function of a recent past, so that the martingale condition $1/N\sum_t P_{\theta_0,C_o(t)}\{D^*_{C_o(t)}(\theta^*)\}^2\rightarrow \sigma^2_0$ will still hold.

Due to the dimension reduction assumption described in Section 2, each $t$-specific experiment in the sequence of experiments corresponds with drawing from a conditional distribution of $O(t)$, given $C_o(t)$. We assume that this conditional distribution is either constant in time or is parametrized by a constant function. We concentrate on the first setting, as it covers all the applications presented in this manuscript, but note the flexibility of our assumptions. Due to the conditional stationarity assumption, we can asymptotically learn the true mechanism that generates this time-series, even when the model for the mechanism is nonparametric. However, with the exception of parametric models allowing for maximum likelihood estimation, we emphasize that statistical inference for certain target parameters of the data generating mechanism is a challenging problem which requires targeted machine learning. 

In our previous work we provided TMLE for marginal causal parameters, which marginalize over the distribution of $C_o(t)$ \cite{vanderLaan2018onlinets}. For instance, we were interested 
the counterfactual mean of a future (e.g., long term) outcome under a stochastic intervention on a subset of the treatment nodes. This specific parameter addresses the important question regarding the distribution of the outcome at time $t$, had we intervened on some of the past treatment nodes in the time-series. While important, the TMLE of such target parameters are challenging to implement due to their reliance on the density estimation of the marginal density of $C_o(t)$ (averaged across $t$). Additionally, we remark that such marginal causal parameters cannot be robustly estimated if treatment is sequentially randomized, due to lack of double robustness of the second order remainder. 

In this work, we instead focus on context-specific target parameter is order to explore robust statistical inference for causal questions based on observing a single time series on a particular unit. In particular, we note that for each given $C_o(t)$, any intervention-specific mean outcome $EY_{g^*}(t)$ with $g^*$ being a stochastic intervention w.r.t. the conditional distribution of $P_{O(t)\mid C_o(t)}$ represents a well studied statistical estimation problem based on observing $n$ i.i.d. copies. Due to this insight and formulation we are able to repurpose known efficient influence curves and corresponding double robust second order expansions from the i.i.d. literature. Even though we do not have repeated observations from the $C_o(t)$-specific distribution at time $t$, due to the conditional stationarity assumption, the collection $(C_o(t),O(t))$ across all time points represent the analogue of an i.i.d. data set $(C_o(t),O(t))\sim_{iid} P_0$, where $C_o(t)$ can be viewed as a baseline covariate in this typical longitudinal causal inference data structure. Therefore, we estimate the sample-specific counterfactual mean (e.g., sample average treatment effect) $\frac{1}{N}\sum_t E(Y_{g^*}(t)\mid C_o(t))$ using the TMLE of $EY_{g^*}$ developed for  i.i.d. data. We note however that the initial estimation step of the TMLE should still respect the known dependence in construction of the initial estimator, by relying on appropriate estimation techniques developed for dependent data. In particular, we emphasize the importance of time-series based cross-validation schemes (rolling, recursive, fixed and hybrid, to name a few) instead of usual  $V$-fold cross-validation commonly employed for i.i.d settings \cite{Elliott2013}. Similarly, variance estimation can proceed as in the i.i.d case using the relevant i.i.d. efficient influence curve, while ignoring the component corresponding to the baseline covariate $C_o(t)$. This insight relies on the fact that the TMLE in this case  allows for the same linear approximation as the TMLE for i.i.d. data, with the martingale central limit theorem applied to the linear approximation instead. Since the linear expansion of the time-series TMLE for context-specific parameter is an element of the tangent space of the statistical model, our derived TMLE is asymptotically efficient.

To emphasize the importance of our work in applied settings, we provide an exciting application of the context-specific parameter in the settings where the optimal individualized rule is learned adaptively from a single observed time-series. This type of application has important applications in precision medicine, in which one wants to tailor the treatment rule to the individual. In particular, we derive a TMLE which uses only the past data $\bar{O}(t-1)$ of a single unit in order to learn the optimal treatment rule for assigning $A(t)$ to maximize the mean outcome $Y(t)$. Here, we assign the treatment at the next time point $t+1$  according to the current estimate of the optimal rule, allowing for the time-series to learn and apply the optimal treatment rule at the same time. The time-series generated by the described adaptive design within a single unit can be used to estimate, and most importantly provide inference for the average across all time-points $t$ of the counterfactual mean outcome of $Y(t)$ under the estimate $d(C_o(t))$ of the optimal rule at a relevant time point $t$ conditional on $C_o(t)$. Assuming that the estimate of the optimal rule is consistent, as the number of time-points increases, our target parameter converges to the mean outcome one would have obtained had they carried out the optimal rule from the start. As such, we can effectively learn the optimal rule and simultaneously obtain valid inference for its performance. Interestingly, this does not provide inference relative to, for example, the control that always assigns $A(t)=0$. This is due to the fact that by assigning treatment $A(t)$ according to a rule, the positivity assumption needed to learn $\frac{1}{N}\sum_tEY_{A(t)=0}(t)\mid C_o(t))$ is violated. However, we note that one can safely conclude that one will not be worse than this control rule, even when the control rule is equal to the optimal rule. If one is interested in inference for a contrast based on a single time-series, then we advocate for random assignment between the control and estimate of optimal rule. As such, our proposed methodology still allows to learn the desired contrast. 

Finally, we note that while the context-specific parameter enjoys many important statistical and computational advantages as opposed to the marginal target parameter based on a single time-series, the formulation employed in this article is only sensible if one is interested in the causal effect of treatment on a short-term outcome. In particular, if the amount of time necessary to collect outcome $Y(t)$ in $O(t)$ is long, then generating a long time series would take too much time to be practically useful. If one is interested in causal effects on a long term outcome and is willing to forgo utilizing known randomization probabilities for treatment, we advocate for the marginal target parameters as described in our previous work \cite{vanderLaan2018onlinets}.

\newpage
\bibliographystyle{plain}
\bibliography{ddtimeseries}

\begin{thebibliography}{10}

\bibitem{Abadie2010}
Abadie A., Diamond A., and Hainmueller J.
\newblock Synthetic control methods for comparative case studies: Estimating
  the effect of california's tobacco control program.
\newblock Working Paper 12831, National Bureau of Economic Research, January
  2010.

\bibitem{Abadie2015}
Abadie A., Diamond A., and Hainmueller J.
\newblock Comparative politics and the synthetic control method.
\newblock {\em American Journal of Political Science}, 59(2):495--510, 2015.

\bibitem{chambaz2017tsoit}
Chambaz A., Zheng W., and van~der Laan~M.J.
\newblock Targeted sequential design for targeted learning inference of the
  optimal treatment rule and its mean reward.
\newblock {\em Ann. Statist.}, 45(6):2537--2564, 12 2017.
\newblock PMCID: PMC5794253.

\bibitem{benkeser2017}
D.~Benkeser, M.~Carone, M.J. van~der Laan, and P.B. Gilbert.
\newblock Doubly robust nonparametric inference on the average treatment
  effect.
\newblock {\em Biometrika}, 104(4):863--880, 2017.

\bibitem{onlinecv2018}
D.~Benkeser, C.~Ju, S.~Lendle, and M.J. van~der Laan.
\newblock Online cross-validation-based ensemble learning.
\newblock {\em Statistics in Medicine}, 37(2):249--260, 2018.

\bibitem{timevarmobile}
A.~Boruvka, D.~Almirall, K.~Witkiewitz, and S.~A. Murphy.
\newblock Assessing time-varying causal effect moderation in mobile health.
\newblock {\em Journal of the American Statistical Association}, 0(ja):0--0,
  2017.

\bibitem{chambaz2010}
A.~Chambaz and M.J. {van der Laan}.
\newblock Targeting the optimal design in randomized clinical trials with
  binary outcomes and no covariate.
\newblock Technical Report Working Paper 258., U.C. Berkeley Division of
  Biostatistics Working Paper Series, 02 2010.

\bibitem{granger1969}
Granger C.W.J.
\newblock Investigating causal relations by econometric models and
  cross-spectral methods.
\newblock {\em Econometrica}, 37(3):424--438, 1969.

\bibitem{darmon2017}
D.~Darmon and P.E. Rapp.
\newblock Specific transfer entropy and other state-dependent transfer
  entropies for continuous-state input-output systems.
\newblock {\em Phys. Rev. E}, 96:022121, Aug 2017.

\bibitem{dempsey2017stratmicrorand}
W.~{Dempsey}, P.~{Liao}, S.~{Kumar}, and S.A. {Murphy}.
\newblock {The stratified micro-randomized trial design: sample size
  considerations for testing nested causal effects of time-varying treatments}.
\newblock {\em ArXiv e-prints}, nov 2017.

\bibitem{probability2010}
R.~Durrett.
\newblock {\em Probability: Theory and Examples (Cambridge Series in
  Statistical and Probabilistic Mathematics)}.
\newblock Cambridge University Press, 2010.

\bibitem{blaauw2017OSL}
Blaauw F. and Chambaz A.
\newblock {OnlineSuperLearner}: Superlearner with online functionality for
  time-series analysis, 2017.
\newblock R package version 0.0.1.

\bibitem{faes2011}
L.~Faes, G.~Nollo, and A.~Porta.
\newblock {{I}nformation-based detection of nonlinear {G}ranger causality in
  multivariate processes via a nonuniform embedding technique}.
\newblock {\em Phys Rev E Stat Nonlin Soft Matter Phys}, 83(5 Pt 1):051112, May
  2011.

\bibitem{Elliott2013}
Elliott G. and Timmermann A.
\newblock Handbook of economic forecasting.
\newblock In {\em Handbook of Economic Forecasting}, volume~2 of {\em Handbook
  of Economic Forecasting}. Elsevier, 2013.

\bibitem{bang&robins05}
Bang H. and Robins J.M.
\newblock {{D}oubly robust estimation in missing data and causal inference
  models}.
\newblock {\em Biometrics}, 61(4):962--973, Dec 2005.

\bibitem{malenica2017tstmle01}
Malenica I, Levy J., and van~der Laan~M.J.
\newblock {tstmle01}: Estimation and inference for marginal causal effect with
  single binary time series, 2017.
\newblock R package version 0.0.1.

\bibitem{malenica2017tstmle}
Malenica I. and van~der Laan~M.J.
\newblock {tstmle}: Data-adaptive estimation and inference for causal effects
  with a single time series, 2017.
\newblock R package version 0.0.1.

\bibitem{coyle2018sl3}
Coyle J.R., Hejazi N.S., Malenica I., and Sofrygin O.
\newblock {sl3}: modern super learning with pipelines, 2018.
\newblock R package version 0.1.0.

\bibitem{kay2015}
Brodersen K.H., Gallusser F., Koehler J., Remy N., and Scott S.L.
\newblock Inferring causal impact using bayesian structural time-series models.
\newblock {\em Annals of Applied Statistics}, 9:247--274, 2015.

\bibitem{microrandtrial}
P.~Klasnja, E.~B. Hekler, S.~Shiffman, A.~Boruvka, D.~Almirall, A.~Tewari, and
  S.~A. Murphy.
\newblock {{M}icrorandomized trials: {A}n experimental design for developing
  just-in-time adaptive interventions}.
\newblock {\em Health Psychol}, 34S:1220--1228, Dec 2015.

\bibitem{Kleinberg2011ALF}
Samantha Kleinberg.
\newblock A logic for causal inference in time series with discrete and
  continuous variables.
\newblock In {\em IJCAI}, 2011.

\bibitem{liao2015microrand}
P.~{Liao}, P.~{Klasnja}, A.~{Tewari}, and S.A. {Murphy}.
\newblock {Micro-Randomized Trials in mHealth}.
\newblock {\em ArXiv e-prints}, apr 2015.

\bibitem{luedtke2016super}
A.R. Luedtke and M.J. van~der Laan.
\newblock Super-learning of an optimal dynamic treatment rule.
\newblock {\em International Journal of Biostatistics}, 12(1):305--332, 2016.
\newblock PMID: 27227726.

\bibitem{Luers2018}
B.L. Luers, P.~Klasnja, and S.~Murphy.
\newblock Standardized effect sizes for preventive mobile health interventions
  in micro-randomized trials.
\newblock {\em Prevention Science}, Jan 2018.

\bibitem{marinazzo2008}
D.~{Marinazzo}, M.~{Pellicoro}, and S.~{Stramaglia}.
\newblock {Kernel Method for Nonlinear Granger Causality}.
\newblock {\em Physical Review Letters}, 100(14):144103, apr 2008.

\bibitem{peters2012causalts}
J.~{Peters}, D.~{Janzing}, and B.~{Scholkopf}.
\newblock {Causal Inference on Time Series using Structural Equation Models}.
\newblock {\em ArXiv e-prints}, jul 2012.

\bibitem{popescu2013causality}
F.~Popescu and I.~Guyon.
\newblock {\em Causality in Time Series: Challenges in Machine Learning}.
\newblock Challenges in machine learning. Microtome Publishing, 2013.

\bibitem{poulos2017}
J.~{Poulos}.
\newblock {Counterfactual time-series prediction with encoder-decoder
  networks}.
\newblock {\em ArXiv e-prints}, dec 2017.

\bibitem{gill1995}
Gill R.D., van~der Laan~M.J., and Wellner J.A.
\newblock {Inefficient estimators of the bivariate survival function for three
  models.}
\newblock {\em Annales de l’Institut Henri Poincare}, 31:545--597, 1995.

\bibitem{li2017npts}
Li~S., Ernest J., and Buhlmann P.
\newblock Nonparametric causal inference from observational time series through
  marginal integration.
\newblock {\em Econometrics and Statistics}, 2:81 -- 105, 2017.

\bibitem{schreiber2000}
T.~Schreiber.
\newblock Measuring information transfer.
\newblock {\em Phys. Rev. Lett.}, 85:461--464, Jul 2000.

\bibitem{timeseries2005}
R.H. Shumway and D.S. Stoffer.
\newblock {\em Time Series Analysis and Its Applications (Springer Texts in
  Statistics)}.
\newblock Springer-Verlag New York, Inc., Secaucus, NJ, USA, 2005.

\bibitem{Smith2017}
Smith S.N., Lee A.J., Hall K., Seewald N.J., Boruvka A., Murphy S.A., and
  Klasnja P.
\newblock {\em Design Lessons from a Micro-Randomized Pilot Study in Mobile
  Health}, pages 59--82.
\newblock Springer International Publishing, Cham, 2017.

\bibitem{stitelman2011}
O.~Stitelman, V.~De~Gruttola, and Mark~J. {van der Laan}.
\newblock A general implementation of tmle for longitudinal data applied to
  causal inference in survival analysis.
\newblock Technical Report Working Paper 281., U.C. Berkeley Division of
  Biostatistics Working Paper Series, 04 2011.

\bibitem{sl2007}
E.C. {van der Laan}, M.J.~Polley and A.E. Hubbard.
\newblock Super learner.
\newblock Technical Report Working Paper 222., U.C. Berkeley Division of
  Biostatistics Working Paper Series, 07 2007.

\bibitem{vdl2010}
M.~J. van~der Laan.
\newblock {{T}argeted maximum likelihood based causal inference: {P}art {I}}.
\newblock {\em Int J Biostat}, 6(2):Article 2, 2010.

\bibitem{vdl2014dr}
M.~J. van~der Laan.
\newblock {{T}argeted estimation of nuisance parameters to obtain valid
  statistical inference}.
\newblock {\em Int J Biostat}, 10(1):29--57, 2014.

\bibitem{gruber2011}
M.J. {van der Laan} and S.~Gruber.
\newblock Targeted minimum loss based estimation of an intervention specific
  mean outcome.
\newblock Technical Report Working Paper 290., U.C. Berkeley Division of
  Biostatistics Working Paper Series, 08 2011.

\bibitem{online2014}
M.J. {van der Laan} and S.D. Lendle.
\newblock Online {Targeted Learning}.
\newblock Technical Report Working Paper 330, U.C. Berkeley Division of
  Biostatistics Working Paper Series, 09 2014.

\bibitem{book2018}
M.J. {van der Laan} and S.~Rose.
\newblock {\em Targeted Learning in Data Science: Causal Inference for Complex
  Longitudinal Studies}.
\newblock Springer Science and Business Media, 2018.

\bibitem{tmle2006}
M.J. {van der Laan} and D.~Rubin.
\newblock Targeted maximum likelihood learning.
\newblock Technical Report Working Paper 213, U.C. Berkeley Division of
  Biostatistics Working Paper Series, 10 2006.

\bibitem{vanderlaan2017efficTMLEwhal}
van~der Laan~M.J.
\newblock {{A} {G}enerally {E}fficient {T}argeted {M}inimum {L}oss {B}ased
  {E}stimator based on the {H}ighly {A}daptive {L}asso}.
\newblock {\em Int J Biostat}, 13(2), Oct 2017.
\newblock PMID: 29023235.

\bibitem{vanderLaan2018haltmle}
van~der Laan~M.J.
\newblock {\em A Generally Efficient HAL-TMLE}, pages 95--102.
\newblock Springer International Publishing, Cham, 2018.

\bibitem{vanderLaan2018onlinets}
van~der Laan~M.J., Chambaz A., and Lendle S.
\newblock {\em Online Targeted Learning for Time Series}, pages 317--346.
\newblock Springer International Publishing, Cham, 2018.

\bibitem{tl2011}
van~der Laan~M.J. and Rose S.
\newblock {\em Targeted Learning: Causal Inference for Observational and
  Experimental Data (Springer Series in Statistics)}.
\newblock Springer, 2011.

\bibitem{vaart2000}
A.W. van~der Vaart.
\newblock {\em Asymptotic Statistics (Cambridge Series in Statistical and
  Probabilistic Mathematics)}.
\newblock Cambridge University Press, 2000.

\bibitem{vanderVaart&Wellner96}
van~der Vaart~A. and Wellner J.
\newblock {\em Weak Convergence and Empirical Processes}.
\newblock Springer-Verlag New York, 03 2013.

\bibitem{handel2009}
R.~{van Handel}.
\newblock On the minimal penalty for markov order estimation.
\newblock {\em ArXiv e-prints}, aug 2009.

\bibitem{xu2017}
Y.~Xu.
\newblock Generalized synthetic control method: Causal inference with
  interactive fixed effects models.
\newblock {\em Political Analysis}, 25(1):57–76, 2017.

\end{thebibliography}

\end{document}